%% file: random_regular.tex
\PassOptionsToPackage{dvipsnames}{xcolor}
\documentclass[letterpaper, 11pt]{article}

\usepackage[letterpaper, margin=1in, includeheadfoot]{geometry}
\usepackage{times, amsmath, amssymb, amsthm, mathtools, thmtools, lineno}
\usepackage{tikz}
\usepackage{enumerate}
\usepackage[color=gray!30]{todonotes}
\usepackage{hyperref}
\usepackage[dvipsnames]{xcolor}
\hypersetup{
	colorlinks=true,
	pdfpagemode=UseNone,
	citecolor=OliveGreen,
	linkcolor=NavyBlue,
	urlcolor=Magenta,
	pdfstartview=FitW
}
\newtheorem{theorem}{Theorem}
\newtheorem{definition}[theorem]{Definition}

\newtheorem{corollary}[theorem]{Corollary}
\newtheorem{remark}[theorem]{Remark}
\newtheorem{lemma}[theorem]{Lemma}

\usepackage{xfrac}
\usepackage{algorithm}
\usepackage[noend]{algpseudocode}
\usepackage[color=gray!30]{todonotes}
\usepackage{xcolor,colortbl}
\usepackage{booktabs}
\usepackage{multirow}
\usepackage{array}
\usepackage{tikz}
\usetikzlibrary{decorations.pathreplacing, calc, shapes, arrows.meta, decorations.markings}
\usetikzlibrary{calc,arrows.meta,shapes.geometric,decorations.pathmorphing}

\usepackage{xspace}

\title{Logarithmic Mixing of Random Walks on Dynamical Random Cluster Models}
\author{}
\author{Andreas Galanis\thanks{Department of Computer Science, University of Oxford, Oxford OX1 3QD, UK} \and Leslie Ann Goldberg\footnotemark[1] \and Xandru Mifsud\footnotemark[1]}
\date{May 5, 2026}

\DeclareGraphicsExtensions{.pdf,.png,.jpg}

\newcommand{\TVD}[2]{||#1 - #2||_{\mathrm{TV}}}
\newcommand{\tmix}{t_{\mathrm{mix}}}
\newcommand{\pmin}{p_{\min}}
\newcommand{\pmax}{p_{\max}}
\newcommand{\pu}{p_\mathrm{u}(q, d)}
\newcommand{\hmin}{h_{\min}}
\newcommand{\hmax}{h_{\max}}
\newcommand{\Hmin}{H_{\min}(d, n)}
\newcommand{\Hmax}{H_{\max}(d, n)}
\newcommand{\imin}{i_{\min}}
\newcommand{\imax}{i_{\max}}
\newcommand{\amin}{\alpha_{\min}}
\newcommand{\amax}{\alpha_{\max}}

\newcommand{\kmin}{k_{\min}}
\newcommand{\kmax}{k_{\max}}
\newcommand{\Cmax}{C_{\max}}
\newcommand{\Cmin}{C_{\min}}
\newcommand{\Cmid}{C_{\mathrm{mid}}}
\newcommand{\Cburn}{C_{\mathrm{burn}}}
\newcommand{\Tmax}{T_{\mathrm{max}}}
\newcommand{\W}{\mathcal{W}}

\newcommand{\rr}{{r(q, d, n)}}

\newcommand{\rrg}{\mathcal{G}(d, n)}
\newcommand{\Cw}{\mathsf{C}_{\mathrm{w}}}
\newcommand{\Ce}{\mathsf{C}_e}
\newcommand{\Vroots}{V_{\mathrm{roots}}}

\newcommand{\Range}[1]{\mathcal{I}\left( #1 \right)}

\newcommand{\Vgood}{V_{\mathrm{good}}}
\def\calT{\mathcal{T}}

\allowdisplaybreaks

\begin{document}
	\maketitle
    \begin{abstract}
We study random walks on dynamically evolving graphs, where the environment is given by a time-dependent subset of the edges of an underlying graph. Concretely, following the recently introduced framework of Lelli and Stauffer, we consider a random walk interacting with a dynamical random-cluster environment, in which edges are updated with rate $\mu>0$ according to Glauber dynamics with parameters $p$ and $q$, and the walker moves at rate 1 but may only traverse edges that are present at the time of the move. This setting introduces strong dependencies between the walk and the environment, as edge-update probabilities depend on the global connectivity structure.

We focus on the case where the underlying graph is a random $d$-regular graph and the parameters lie in the subcritical regime $p < \pu$ where it is known that the Glauber dynamics mixes quickly. Our main result is to show that for any $\varepsilon >0$ and all $q \ge 1$, for all $p$ in the subcritical regime, the mixing time of the joint process is $\Theta(\log n)$ (in continuous time) whenever $\mu\geq \varepsilon \log n$. This matches the mixing time of the simple random walk on a static random regular graph, showing that in this regime the evolving environment does not slow down mixing. Our proof is based on a coupling argument that uses path-count techniques to overcome the dependencies in the edge dynamics by controlling the structure of the environment along typical trajectories.
\end{abstract}
	\thispagestyle{empty}
	\newpage
	\setcounter{page}{1}
	\section{Introduction}

    Random walks on graphs are a central object of study in computer science and probability theory, providing a fundamental tool for algorithms, sampling procedures, and stochastic processes on discrete structures. Classical theory, however, largely concentrates on static underlying graphs, whereas in many complex networks the underlying structure changes over time. This dynamic evolution introduces new challenges in characterising the mixing properties of random walks. In this paper, we study random walks on a dynamically evolving environment and analyse how changes in the underlying structure affect the walk; the ``environment" for the random walk is an evolving edge set~$(\eta_t)$ of an underlying
graph $G$ (later, we will focus on random regular graphs).

We consider a well-studied model in which the graph evolves as a random process on a fixed underlying graph $G=(V,E)$. More precisely, the system is described by a continuous-time Markov process $(\eta_t,X_t)_{t\geq 0}$ where $\eta_t\in \{0,1\}^E$ encodes the set of present edges at time $t$   and $X_t\in V$ denotes the location of the walker at time~$t$.   The dynamics are driven by $|E| + 1$ independent Poisson clocks -- one clock $\Cw$ with rate~$1$ which updates the walker $X_t$, and  
a clock $\Ce$ with rate~$\mu>0$ for each edge $e\in E$.
When an edge clock $\Ce$ rings, the environment is updated at edge~$e$. Let $(\eta_{t-},X_{t-})$  denote the state  just before time $t$. Whenever the walker clock $\Cw$ rings, say at time $t$, the walker picks a  neighbour $u$ of  $X_{t-}$ in $G$ u.a.r. If the edge $\{X_{t-},u\}$ is present in  $\eta_{t-}$, then $X_t=u$. Otherwise, the walker remains at its current location, i.e., $X_t=X_{t-}$. 

Key to the dynamic graph evolution are the edge updates, which are triggered whenever an edge clock $\Ce$ rings.
Previous work has mainly focused on the independent edge-update setting, based on the percolation model, where the edge $e$ is included with probability $p$ for some fixed $p\in (0,1)$, independently from the rest of the configuration $\eta_t$.     Here, we focus on  the recently introduced framework of Lelli and Stauffer in which the update of an edge is correlated with the current connected-component structure, according to the random cluster model. For a parameter $q>1$, let $\pmin \coloneqq \min\{p, \frac{p}{q(1-p) + p}\}$ and $\pmax \coloneqq \max\{p, \frac{p}{q(1-p) + p}\}$. If $e$ is a cut edge in $\eta_{t-}$, we set $\eta_t(e)=1$ w.p.  $\pmin$ and $\eta_t(e)=0$ otherwise. If $e$ is not a cut edge in $\eta_{t-}$, we set $\eta_t(e)=1$ w.p.  $\pmax$ and $\eta_t(e)=0$ otherwise.

It can be shown that for $d$-regular graphs $G=(V,E)$ and arbitrary $q>0$ the process $(\eta_t,X_t)$ converges to the distribution $\pi_{G,p,q}\times \pi_V$, where $\pi_V$ is the uniform distribution over $V$ and $\pi_{G,p,q}$ is the random cluster measure where $\kappa(\eta)$ is the number of connected components in~$\eta$ and
\[\pi_{G,p,q}(\eta)\propto p^{|\eta|}(1-p)^{|E|-|\eta|}q^{\kappa(\eta)}\ \mbox{  for all }\eta\in \{0,1\}^E.\] We are interested in the mixing time $\tmix^{(\mu, p, q)} (G):= \max_{\eta_0,X_0}\inf \big\{t:\ \|(\eta_t,X_t)-\pi_{G,p,q}\times \pi_V\|_{TV}\leq \tfrac{1}{4}\}$ to get close to stationary from an arbitrary starting state.  A standard argument yields  that $\tmix^{(\mu, p, q)} (G)=\Omega(\log n)$ for all graphs $G$ with diameter $\Omega(\log n)$, since the walker can move graph distance at most one per walker-clock ring; we will primarily focus on obtaining matching upper bounds in the dynamic setting.

A key feature of the model is the interplay between the walk and the edge dynamics. The process $\eta_t$ corresponds to Glauber dynamics for the random-cluster model with update rate $\mu$. When $q=1$, this reduces to the independent-edge setting, where the environment mixes once each edge has been updated. For $q>1$, however, the behaviour is substantially more complex: the mixing time of Glauber dynamics is known to be exponential when $p>\pu$ and it remains open whether rapid mixing holds for all $p\leq \pu$. For random $d$-regular graphs, Blanca and Gheissari~\cite{Blanca2021} showed that the mixing time for $p<\pu$ is  $O(\tfrac{1}{\mu}\log n)$. This motivates our focus on random regular graphs and the subcritical regime $p<\pu$.

A key difficulty in analysing the dynamic random walk in the subcritical regime was already highlighted in the work of Lelli and Stauffer on the torus. In this regime, the configuration typically consists of small connected components that evolve over time. Although this prevents the walker from being trapped in moderately-large regions, the iterative reformation of components introduces subtle dependencies between the walk and the environment. In particular, the walker is confined to its current component, while the component structure itself influences the edge dynamics, making it challenging to control the mixing behaviour of the joint process.

Our main result establishes that the full joint process mixes in $O(\log n)$ time for all $p$ up to the critical threshold $\pu$,  matching the mixing time of the simple random walk on a static random regular graph. Specifically, for $q\geq 1$, $p<\pu$ and sufficiently large $\mu$, we show that the mixing time of the joint process on a random $d$-regular graph is $O(\log n)$ (with high probability over the choice of the random $d$-regular graph). 
	
\begin{restatable}{restatabletheorem}{mainThm} \label{thm:main}
Fix integer $d\geq 3$ and reals $q\geq 1$ and $p,\varepsilon > 0$ such that 
$p < \pu$.   
With probability $1 - o_n(1)$ over $G \sim \rrg$, for every edge update rate $\mu \geq \varepsilon \log n$, we have $\tmix^{(\mu, p, q)} (G) = \Theta\left(\log n\right)$.
\end{restatable}
In particular, our result shows that, throughout the subcritical regime and for sufficiently fast edge updates, the evolving environment does not slow down mixing beyond the static case. Our assumption $\mu = \Omega(\log n)$ for the edge updates ensures that the environment mixes in
$O(1)$ time, placing us in a regime where the dynamics of the walk, rather than
the environment, govern the mixing behaviour. In this regime, we show that the
walk mixes optimally, matching the static case. Extending these results to
slower environments, in particular $\mu = O(1)$, is an interesting open
problem, since the environment may then become the dominant source of mixing
bottlenecks.  
\vskip 0.2cm

\noindent \textbf{Related Work.}
 The independent edge-update setting was first considered by Peres, Stauffer, and Steif in \cite{Peres2015}, where they established the mixing time on the $d$-dimensional torus of side-length $n$ for the subcritical regime. Subsequent work in this setting obtained sharp results on the mixing time (and other related quantities) for various families of graphs when $q = 1$, notably 
the work of Peres, Sousi, and Steif~\cite{Peres2020} on the torus  and Sousi and Thomas~\cite{Sousi_2020}  on the complete graph in the supercritical regime. Hermon and Sousi~\cite{Hermon_2020} devised, for $q=1$, a general comparison principle between the dynamic and static settings on arbitrary underlying~graphs.

The analysis of dynamic random walks on evolving random cluster configurations  for arbitrary $q>0$ was initiated by Lelli and
Stauffer~\cite{Lelli2024}, where they showed optimal mixing-time results on the torus for sufficiently small $p>0$ and arbitrary $\mu>0$ (resembling to a certain extent the subcritical regime we consider here). Their proof develops a non-Markovian coupling
guided by a multi-scale space-time analysis of the environment, alternating between
random-walk coupling steps in favourable regions and identity coupling near
unfavourable regions. Our proof of Theorem~\ref{thm:main} is also based on a coupling approach, but adapts the path-count method of Lubetzky and Sly~\cite{Lubetzky_2010} to the dynamic setting; we discuss the main obstacles in the next section.

We finally remark that our notion of ``dynamic graph" differs from the notion of dynamic graphs studied in other algorithmic settings where vertex/edge updates can be arbitrary. In the random walk context, such updates are typically not ergodic unless some extra assumptions are made. We refer the reader to the work of Sauerwald and Zanetti \cite{sauerwald_et_al:LIPIcs.ICALP.2019.93} for progress on this front (see also \cite{Leran}) and related references. Various other properties beyond mixing time have been studied in this setting. For other dynamical MCMC variants and  applications, see \cite{Andres_2024, Avena:2025aa, 10.2307/26542447, AVENA20193360, Avena2025, BISKUP2018985}.

\section{Proof Overview} \label{sec:extended_proof_overview}
In the rest of the paper we fix 
an integer~$d\geq 3$ and reals $q\geq 1$ and $p, \varepsilon > 0$ such that 
$p < \pu$. Since $q \geq 1$, $\pmin = \tfrac{p}{q(1-p) + p}$ and $\pmax = p$. 
We assume that $G=(V,E)$ is sampled from $\rrg$, so it is $d$-regular and has $n$ vertices. For vertices $u, v \in V$,  $d(u, v)$ denotes the distance between $u$ and $v$ in $G$. Given $v \in V$ and a positive integer~$R$, the \textit{ball of radius $R$} around $v$, denoted by $B_R(v)$, is the set of vertices with distance at most $R$ from $v$. The \textit{boundary $\partial B_R(v)$} is the set of vertices with distance exactly $R$ from $v$. From now on, fix $R = R(d,n) \coloneqq \frac{1}{5}\log_{d-1} n$. 
Where the base is not specified, logarithms are base~$e$.\vskip 0.2cm

\noindent \textbf{High-Level Strategy.}  
We prove Theorem~\ref{thm:main} via a coupling argument, inspired by the approach of Lubetzky and Sly~\cite{Lubetzky_2010} in the static setting. There, a main component in the analysis relies on the fact that random regular graphs are locally tree-like, allowing the walk to be approximated by a biased random walk on the infinite $d$-regular tree, whose behaviour can be tracked precisely.

In our dynamic setting, this approach breaks down. The local neighbourhood of the walker evolves over time and can be highly irregular; more importantly, edge updates are no longer independent and the probability that an edge opens depends on its cut status, which in turn depends more globally on the structure of the current configuration.

Our approach to bypass this difficulty is to establish precise lower bounds on the probability that the walker follows prescribed trajectories (paths). To this end, we decompose such trajectory events into realisations of the walker clock and transition choices, together with compatible environment updates. The main challenge is to control the dependence introduced by the random-cluster dynamics, where edge refresh probabilities differ between cut and non-cut edges. Thus, instead of the local tree analysis of the static setting, we need to devise a path-probability lower bound argument that handles more robustly the dependencies that arise in the dynamic setting.

We show that, with high probability, for a large set of ``good" starting vertices, typical trajectories of length $c\log n$ (for appropriate constant $c$) avoid small cycles and explore regions that remain tree-like over the relevant time. Environment-wise, we invoke sparsity results from  ~\cite{Blanca2021} to show that along such trajectories the edges encountered by the walker are, with high probability, cut edges at their last refresh time and hence are available to the walker with probability $\pmin$. Consequently, most trajectories see cut edges and behave like a walk with bias parameter $\pmin$, giving the key technical estimate that allows us to control transition probabilities despite the dependencies.

The proof is completed via a three-phase coupling. We couple two copies of the chain, one started from an arbitrary initial state and one from stationarity. In the first phase, the chains proceed independently and we show that with constant probability the walker reaches a ``good" vertex where we can control the neighbourhood structure and presence of small cycles. In the second phase, we exploit the trajectory estimates above to ensure that the marginal distributions of the two walkers have substantial overlap. Finally, in a short third phase, we use the rapid mixing of the environment (which occurs in $O(1)$ time in our regime) to couple the full system. Altogether, this yields a coupling with constant success probability in $O(\log n)$ time.

\paragraph{\normalfont\textbf{Lower bounding walker trajectory probabilities.}} To carry out the previous strategy, a key ingredient in our analysis is to obtain an accurate lower bound on the probability that the walker follows a given trajectory over a prescribed time interval. 

For $\alpha \in \mathbb{N}$, we let $\overrightarrow{x_\alpha} \in V^{\alpha + 1}$  be a walk $(x_0, \dots, x_\alpha)$ on the underlying graph $G$ (with repeated vertices allowed) and  use $\ell(\overrightarrow{x_\alpha})$  to denote the number of stationary transitions in the walk (i.e., the number of indices $i$ such that $x_i=x_{i+1}$). For a time interval $(t, t')$, $N_\mathrm{w}(t, t')$ denotes the number of rings of the walker clock~$\Cw$. 
	
	\begin{definition}[The walker trajectory event $\widehat\Xi \left(\protect\overrightarrow{x_\alpha}, t_0, T\right)$] \label{def:walker-trajectory-event}
		 Let $\alpha \in \mathbb{N}$ and consider a walk $\overrightarrow{x_\alpha} \in V^{\alpha + 1}$. For all $t_0 \geq 0$ and $T > 0$, $\widehat\Xi \left(\overrightarrow{x_\alpha}, t_0, T\right)$ is the event that $N_\mathrm{w}(t_0, t_0 + T) = \alpha$ and the walker follows the walk $\overrightarrow{x_\alpha}$ during the time interval $(t_0, t_0 + T)$.
	\end{definition}

When an edge refreshes in the random cluster configuration corresponding to the environment, it opens with probability~$\pmin$ if it is a cut edge and with probability~$\pmax$ otherwise. If $q > 1$ then $\pmin < \pmax$ so updates depend on the current configuration. Hence bounding the probability of a prescribed trajectory requires controlling whether or not examined edges were cut edges at their last refresh. 
	
An edge fails to be a cut edge if it lies on a cycle (of the random cluster configuration) whose edges are all open at the time that the edge is refreshed. In the regime when $p < \pu$, this is highly unlikely for long cycles, after a sufficiently long ``burn-in" time of the environment dynamics. We take a different approach for short cycles.  By restricting to walks that avoid vertices on small cycles, namely cycles of order $O(\log \log n)$, we ensure that, with high probability, the examined edges are cut edges when they are refreshed. Consequently, along such trajectories, the walker effectively experiences the minimum opening probability $\pmin$.
	
	\begin{definition}[$r$, $r$-acyclic walk] \label{def:r-acyclic-walk}		
		Let $r = \rr \coloneqq \frac{3 \log_{d-1} \log n}{\log_{d-1} (2 /  (1 + \pu))}$. A walk on a graph is said to be \textit{$r$-acyclic} if it never visits a cycle of length less than $r$.
	\end{definition}

In Section~\ref{sec:xxx} we prove Lemma~\ref{lem:te_prob_lw_bdd}, which gives a lower bound on the probability that the walker follows a prescribed $r$-acyclic trajectory over a moderate time interval, after the environment has undergone a sufficiently long burn-in. The proof exercises tight control over the probability that examined edges are cut edges, and this yields a probability bound in which the stationary transitions are weighted by $1 - \pmin$  rather than the naive $1 - \pmax$. This improved bound saves a polynomial factor\footnote{If we omit the cut edge analysis and allow the effective random cluster parameters to differ so that some are $\pmin$ and others are $\pmax$, then the lower bound on the probability in Lemma \ref{lem:te_full_prob_lw_bdd} becomes smaller by a polynomial in $n$, which leads to a corresponding factor in the coupling time.} in the coupling time (and hence in the mixing time bound), and is the main point of the cut edge analysis.
	
\begin{restatable}{restatablelemma}{teProbLwBdd} \label{lem:te_prob_lw_bdd}
\sloppy There is a constant $C_0 \in (0, 1)$ such that, with probability $1 - o_n(1)$ over $G \sim \rrg$, for every $\mu \geq \varepsilon \log n $, every $T > 0$ such that $({1}/{(50 \ \pmin)}) \log_{d - 1} n \leq T \leq ({4}/{\pmin}) \log_{d - 1} n$, every $\alpha \in \mathbb{N}$ such that $\alpha \leq ({4}/{\pmin}) \log_{d - 1} n$, every~$r$-acyclic walk $\overrightarrow{x_\alpha} \in V(G)^{\alpha + 1}$, and every $\eta \in \{0, 1\}^{E(G)}$,
\begin{align*}
 \mathbb{P} [\widehat\Xi(\overrightarrow{x_\alpha}, \Cburn, T) & \mid (\eta_0, X_0) = (\eta, x_0)] \geq
C_0 \mathbb{P}[N_\mathrm{w}(\Cburn, \Cburn + T) = \alpha] \left(\frac{\pmin^{\alpha - \ell(\overrightarrow{x_\alpha})} (1 - \pmin)^{\ell(\overrightarrow{x_\alpha})}}{d^{\alpha - \ell(\overrightarrow{x_\alpha})}}\right).
		\end{align*}
	\end{restatable}

\paragraph{\normalfont\textbf{Counts of $r$-acyclic walks between ``good" vertices.}} 

Let $G=(V,E)$.
Our coupling hinges on the walker quickly hitting a ``good" set of vertices $\Vgood \subseteq V$ of size $n - o(n)$ and then becoming almost uniformly mixed over it in $O(\log n)$ time. We will introduce two properties, namely \textit{$(h, i)$-sparse} and \textit{$k$-root} vertices, which will be central to our definition of $\Vgood$. We first define some auxiliary notions needed for these properties.

It will be convenient to view a walk on a regular graph $G$ as a walk on a non-backtracking walk tree (NBWT) of $G$ (but allowing transitions from any node to its parent). Given $G = (V, E)$ and $u \in V$, the NBWT of $G$ at $u$, denoted by $\mathcal{T}_u$, has nodes corresponding to finite non-backtracking walks starting at $u$, each labelled by its terminal vertex. The root is labelled $u$ since it corresponds to the trivial walk of length $0$, and a node corresponding to $(u = x_0, x_1, \dots, x_k)$ has children given by the extensions $(u = x_0, x_1, \dots, x_k, x_{k + 1})$ such that $\{x_k, x_{k+1}\} \in E$ and $x_{k+1} \neq x_{k-1}$. 

In the dynamic setting, a walk from $u$ can then be naturally interpreted as a walk on $\mathcal{T}_u$. We say that it is $(h,i)$-\emph{constrained} for non-negative integers~$h$ and~$i$ if it ends at depth $h$ in~$\calT_u$ and has $h + 2i$ transitions that change the depth by plus or minus one.
	
\begin{definition}[$(h, i)$-constrained walk] \label{def:h-i-constrained-walk}
Let $h, i \in \mathbb{N}$. An \textit{$(h, i)$-constrained walk} on a rooted tree is a walk of length $h + 2i$ starting at the root of the tree, with exactly $h + i$ transitions increasing the depth by $+1$ and $i$ transitions decreasing the depth by~$-1$. By $\omega_{h, i}$ we denote the number of $(h,i)$-constrained walks on the rooted infinite $d$-regular tree, while by $\widetilde{\omega}_{h, i}$ we denote the number of $(h,i)$-constrained walks on the rooted infinite $(d - 1)$-ary tree.
	\end{definition}

If for a suitably large range of values for $h, i \in \mathbb{N}$ we have that almost all $(h, i)$-constrained walks from a vertex $u$ are $r$-acyclic, then Lemma \ref{lem:te_prob_lw_bdd} can be applied for a large range of $\alpha \in \mathbb{N}$ over almost all length-$\alpha$ walks from $u$. This motivates the following definition.

\begin{definition}[$(h, i)$-sparse vertex] \label{def:h-i-sparse}
A vertex $u \in V$ is said to be \textit{$(h, i)$-sparse} if at least $(1 - 1/(\log n)^3)\, \omega_{h,i}$ of the $(h, i)$-constrained walks from $u$ are $r$-acyclic.
	\end{definition}

In Appendix~\ref{sec:rrg_prop2} we will prove Lemma \ref{lem:rrg_glbl_geom}, which establishes that, with probability $1 - o_n(1)$ over $G \sim \rrg$, almost all vertices are $(h, i)$-sparse for all $h, i \leq ({4}/{\pmin}) \log_{d - 1} n$. Therefore, for every such vertex $u$ and every $\alpha \in \mathbb{N}$ such that $\alpha \leq ({4}/{\pmin}) \log_{d - 1} n$, Lemma \ref{lem:te_prob_lw_bdd} applies over almost all length-$\alpha$ walks from $u$. 

\begin{restatable}{restatablelemma}{rrgGlblGeom} \label{lem:rrg_glbl_geom} With probability $1 - o_n(1)$ over $G \sim \rrg$, for all 
$h, i \in \mathbb{N}$ such that $h, i \leq ({4}/{\pmin}) \log_{d - 1} n$,
there are $n - O\left((d-1)^r (\log n)^8\right)$ vertices which are $\left(h, i\right)$-sparse.
	\end{restatable}
	
\begin{definition}[$k$, $k$-root] \label{def:k-root}
Let $k = k(d, n) \coloneqq \left\lfloor\log_{d-1}\log n\right\rfloor$.
A vertex $v$ of a $d$-regular $n$-vertex graph~$G$ is
a \textit{$k$-root} if, and only if,   
the induced subgraph $G[B_k(v)]$ is a tree. $\Vroots$ denotes the set of all $k$-roots in~$G$.
\end{definition}

From now on fix $\Hmin \coloneqq \left\lfloor\log_{d-1} n\right\rfloor + 2 \left\lfloor\log_{d-1}\log n\right\rfloor$ and $\Hmax \coloneqq \left\lfloor\log_{d-1} n\right\rfloor + \left\lfloor\frac{1}{10}\log_{d-1} n\right\rfloor - 1$. A result from Lubetzky and Sly (\cite{Lubetzky_2010}, Lemmas 3.2 and 3.5) shows that, with probability $1 - o_n (1)$ over $G \sim \rrg$, almost all vertices are $k$-roots and for all $u, v \in \Vroots$ and $h, i \in \mathbb{N}$ satisfying $d(u, v) > 2 k$ and $h \in [\Hmin, \Hmax]$, at least a $\left(\frac{1 - o_n(1)}{n}\right)$ fraction of all $(h, i)$-constrained walks from $u$ end at $v$. In other words, for a $k$-root $u$ and $h \in [\Hmin, \Hmax]$, the terminal vertex of a uniformly chosen $(h, i)$-constrained walk from $u$ is close to uniform over $n - o(n)$ vertices which are $k$-roots. 
This underpins our rationale for viewing walks of a given length from a vertex $u$ as families of $(h, i)$-constrained walks from $u$.
We are now in a position to define the notion of a ``good" vertex, central to our analysis.

\begin{definition}[Good vertex] \label{def:good-vertex}
A vertex $u \in V$ is  \textit{good} if it is a $k$-root and it is $(h, i)$-sparse for all $h, i \in \mathbb{N}$ such that $h, i \leq \left({4}/{\pmin}\right) \log_{d - 1} n$. 
$\Vgood$ is the set of all good vertices in the graph~$G$.
	\end{definition}
	
	\paragraph{\normalfont\textbf{Coupling phases.}}
	
We next describe our coupling in more detail. We will fix the times
	\begin{align*}
		T_1 &\coloneqq 1 + \Cburn + \left({d}/{(d - 2)}\right) \left({1}/{\pmin}\right) \left({1}/{40}\right) \log_{d - 1} n, \\
		T_2 &\coloneqq T_1 + \Cburn + \left({d}/{(d - 2)}\right) \left({1}/{\pmin}\right) \left(\left({81}/{80}\right) \log_{d- 1} n + \log_{d-1} \log n\right),
	\end{align*}
and an additional time $T_3$ which we will define in the proof of Theorem~\ref{thm:main} --- it will be $T_2 + O(1)$.
 	
	Consider two copies of our full chain, $M \coloneqq (\eta_t, X_t)$ started at a given state $(\eta, u)$ and $M' \coloneqq (\eta'_t, X'_t)$ started from the stationary distribution $\pi_{G, p, q} \times \pi_V$. We will consider a coupling of these two chains over three phases, of lengths $T_1$, $T_2 - T_1$ and $T_3 - T_2$. We will first run the chains independently until time $T_1$. 
	
	We will show that at time $T_1$ the walker $X$ in chain $M$ is in $\Vgood$ with probability $\Omega(1)$. Now, given $v \in \Vgood$ and $\hat\eta \in \{0, 1\}^E$, we consider a maximal coupling between (1) the chain $M$ under the conditional distribution where $\left(\eta_{T_1}, X_{T_1}\right) = (\hat\eta, v)$ and (2) the chain $M'$ under the stationary distribution.
    We will run this   maximal coupling until time~$T_3$. We will show that by this point, the chain $M$ is close to stationarity.
	
	Lemma~\ref{lem:phase1_walk_counts}, which is proved in Section~\ref{sec:phase1_walk_counts},  establishes that from every vertex $u$, among walks of length at most ${3R}/{5}$ which end at a distance $\frac{R}{10} \leq h \leq \frac{R}{5}$ from $u$, a constant fraction of them avoid cycles of length less than $r = \rr$ (except possibly at $u$ if it is contained in such a small cycle) and end at a vertex in~$\Vgood$.
	
	\begin{restatable}{restatablelemma}{phaseOneWalkCounts} \label{lem:phase1_walk_counts}
With probability $1 - o_n(1)$ over $G \sim \rrg$, for all vertices $u$,  all $h, i \in \mathbb{N}$ such that ${R}/{10} \leq h \leq {R}/{5}$, and all $i \leq {R}/{5}$, there are at least $d - 2$ neighbours $w$ of $u$ such that from each $w$ there are ${\widetilde{\omega}_{h, i}} / {2}$ walks which are $(h, i)$-constrained, $r$-acyclic and end at a vertex in $\Vgood$.
	\end{restatable}
	
	Consequently, from the trajectory probability lower bound in Lemma \ref{lem:te_prob_lw_bdd}, we obtain that, with probability $\Omega(1)$, the walker $X$ is at a ``good" vertex at time $T_1$. This is captured in Lemma~\ref{lem:phase1_bdd}, which is proved in Section~\ref{sec:phase1_bdd}.
	
	\begin{restatable}{restatablelemma}{phaseOneBound} \label{lem:phase1_bdd}
\sloppy There is a constant $C_1 \in (0, 1)$ such that, with probability $1 - o_n(1)$ over $G \sim \rrg$, for all $\mu \geq \varepsilon \log n $ and all $(\eta, u) \in V(G) \times  \{0, 1\}^{E(G)}$, 
$\mathbb{P}\left[X_{T_1} \in \Vgood \mid \left(\eta_0, X_0\right) = \left(\eta, u\right)\right] \geq C_1$.
	\end{restatable}
	
We will show that, once the dynamics has reached a vertex $v \in \Vgood$ at time $T_1$,  
there is a set $S_v \subseteq V$ with $|S_v| = n - o(n)$ such that for all $x \in S_v$, for suitable values of $h, i \in \mathbb{N}$, there are $\Omega\left(\left({1}/{n}\right) \omega_{h, i}\right)$ walks from $v$ to $x$ which are $(h, i)$-constrained and $r$-acyclic. This  allows us to invoke Lemma \ref{lem:te_prob_lw_bdd} over all admissible values of $h$ and $i$, giving us a lower bound on the success probability of phase two by aggregating over all such trajectories. 
This is Lemma~\ref{lem:sparse_good_set_size}, which is 
proved in Section~\ref{sec:rrg_prop3} using Lemmas~\ref{lem:rrg_glbl_geom}~and~\ref{lem:kroot_path_counts}.
	
	\begin{restatable}{restatablelemma}{sparseGoodSetSize} \label{lem:sparse_good_set_size}
With probability $1 - o_n(1)$ over $G \sim \rrg$, for any $u \in \Vgood$ there exists $S_u \subseteq V$ such that $\left|S_u\right| = n - o(n)$ and 
$S_u$ has the following property. 
For all $h\in \mathbb{N}$ such that $\Hmin \leq h \leq \Hmax$, all $i \leq \left({4}/{\pmin}\right) \log_{d - 1} n$, and all   $v \in S_u$, there are at least $({1}/{4n}) \widetilde{\omega}_{h, i}$ walks from $u$ to $v$ which are $(h, i)$-constrained and $r$-acyclic.
	\end{restatable}
	
Consequently, we obtain Lemma~\ref{lem:phase2_bdd}, which is proved in Section~\ref{sec:phase2_bdd}, and
shows
that the walker $X$ is almost uniformly mixed over the set $S_v$ by time $T_2$.
	
	\begin{restatable}{restatablelemma}{phaseTwoBound} \label{lem:phase2_bdd}
There is a constant $C_2 \in (0, 1)$ such that,  with probability $1 - o_n(1)$ over $G \sim \rrg$, for all $\mu \geq \varepsilon \log n $ and all $v \in \Vgood$, there is a set $S_v \subseteq V(G)$ such that $|S_v| = n - o(n)$ and, for all $\left(\hat\eta, x\right) \in \{0, 1\}^{E(G)} \times S_v$, 
$\mathbb{P}\left[X_{T_2} = x \mid \left(\eta_{T_1}, X_{T_1}\right) = \left(\hat\eta, v\right)\right] \geq {C_2}/{n}$.
	\end{restatable}
	
	\subsection{Proof of Theorem \ref{thm:main}}
	
	\mainThm*
	
\begin{proof}
Let $G=(V,E)$. By Lemma \ref{lem:phase1_bdd}, 
there is a constant $C_1 \in (0,1)$ such that,
with probability $1-o_n(1)$ over the choice of~$G$, 
for all $u \in V$ and $\eta \in \{0, 1\}^E$, 
\begin{equation} \label{eq:main1}
\mathbb{P}\left[X_{T_1} \in \Vgood \mid \left(\eta_0, X_0\right) = \left(\eta, u\right)\right] \geq C_1.
\end{equation}
By Lemma \ref{lem:phase2_bdd} there is a constant $C_2\in (0,1)$ such that, with probability $1-o_n(1)$,
for all $v \in \Vgood$ there is a set $S_v \subseteq V$ such that $|S_v| = n - o(n)$ and for all $x \in S_v$ and $\hat\eta \in \{0, 1\}^E$,
		\begin{equation} \label{eq:main2}
			\mathbb{P}\left[X_{T_2} = x \mid \left(\eta_{T_1}, X_{T_1}\right) = \left(\hat\eta, v\right)\right] \geq {C_2}/{n}.
		\end{equation}

Blanca and Gheissari established in \cite{Blanca2021} that, with probability $1-o_n(1)$ over the choice of $G$, the mixing time of the discrete-time random cluster Glauber dynamics with parameters $(p, q)$ on $G$ satisfies $\tmix^{\mathrm{GD}, (p, q)} (G) = \Theta(n \log n)$. 
This gives a $\Theta(\log n)$ mixing time in continuous time, and a $\Theta(1)$ mixing time in our setting with $\mu \geq \epsilon \log n$.
Consequently, there is a constant $\tau > 0$ such that, with probability $1 - o_n(1)$ over the choice of $G$, for every $\mu \geq \varepsilon \log n $ and every initial distribution $\eta_0 \sim \rho$, the law of the environment dynamics $(\eta_t^{(\rho)})_{t \geq 0}$ at time $\tau$ is within total variation distance~$1/8$ of $\pi_{G, p, q}$. 

Let $T_3 \coloneqq T_2 + \tau$.
Given $v, x \in V$ and $\hat\eta \in \{0, 1\}^E$, let $\rho \coloneqq \rho\left(\hat{\eta}, v, x\right)$ be the law of $\eta_{T_2}$ conditioned on $(\eta_{T_1}, X_{T_1}) = (\hat\eta, v)$ and $X_{T_2} = x$. Let \[F_{\hat{\eta}, v, x} \! \coloneqq \! \left\{\omega \in \{0, 1\}^E \colon \mathbb{P}\left[\eta_{T_3} = \omega \mid \eta_{T_2} \sim \rho\left(\hat\eta, v, x\right)\right] \geq (1/2) \pi_{G, p, q}(\omega)\right\}.\] 
For all $\omega\in \{0,1\}^E\setminus F_{\hat\eta,v,x}$, 
$\min\{\mathbb{P}\left[\eta_{T_3} = \omega \mid \eta_{T_2} \sim \rho(\hat\eta, v, x)\right], \pi_{G, p, q}(\omega)\}$ is at most 
$(1/2)\pi_{G,p,q}(\omega)$ and for $\omega\in F_{\hat\eta,v,x}$ this quantity is at most $\pi_{G,p,q}(\omega)$. Let $\mathcal{D}(\eta^{(\rho)}_{\tau})$ be the distribution of the environment dynamics, started from the initial distribution $\rho$, at time~$\tau$. Since $\TVD{\mathcal{D}(\eta^{(\rho)}_{\tau})}{\pi_{G, p, q}} \leq 1/8$ for our choice of $\tau$, and 
since $\TVD{\mathcal{D}(\eta^{(\rho)}_{\tau})}{\pi_{G, p, q}} \! = 1 - \!\!\!\!\!\! \sum\limits_{\omega \in \{0, 1\}^E} \!\!\!\!\!\! \min\left\{\mathbb{P}\left[\eta_{T_3} = \omega \mid \eta_{T_2} \sim \rho\right], \pi_{G, p, q}(\omega)\right\}$, we have that $\pi_{G, p, q} \left(F_{\hat{\eta}, v, x}\right) \geq 3/4$. Since the environment evolves independently of the walker, if the walker clock does not ring between $T_2$ and $T_3$, the walker remains at the same vertex while the environment mixes. Thus, for all $v, x \in V$, $\hat\eta \in \{0, 1\}^E$ and $\omega \in F_{\hat{\eta}, v, x}$,
		\begin{align*}
			&\ \mathbb{P}\left[\left(\eta_{T_3}, X_{T_3}\right) = (\omega, x) \mid \left(\eta_{T_1}, X_{T_1}\right) = \left(\hat\eta, v\right)\right] \\
			\geq& \ \mathbb{P}\left[\eta_{T_3} = \omega, N_{\mathrm{w}}(T_2, T_3) = 0 \mid X_{T_2} = x, \left(\eta_{T_1}, X_{T_1}\right) = \left(\hat\eta, v\right)\right] \mathbb{P}\left[X_{T_2} = x \mid \left(\eta_{T_1}, X_{T_1}\right) = \left(\hat\eta, v\right)\right] \\
			=& \ \mathbb{P}\left[N_{\mathrm{w}}(T_2, T_3) = 0\right]\mathbb{P}\left[\eta_{T_3} = \omega \mid X_{T_2} = x, \left(\eta_{T_1}, X_{T_1}\right) = \left(\hat\eta, v\right)\right] \mathbb{P}\left[X_{T_2} = x \mid \left(\eta_{T_1}, X_{T_1}\right) = \left(\hat\eta, v\right)\right] \\
			\geq& \ (1/2) e^{-\tau} \pi_{G, p, q}(\omega)  \mathbb{P}\left[X_{T_2} = x \mid \left(\eta_{T_1}, X_{T_1}\right) = \left(\hat\eta, v\right)\right]
		\end{align*}
where the last inequality follows from the fact that $\omega \in F_{\hat{\eta}, v,x}$ and the fact that the probability of a clock of rate $1$ not ringing in $\tau$ time is $e^{-\tau}$. Combining this with (\ref{eq:main2}), 
we get that for all $v \in \Vgood$ there is a set $S_v \subseteq V$ satisfying $|S_v| = n - o(n)$ such that, for all $\left(\hat\eta, x\right) \in \{0, 1\}^E \times S_v$ and $\omega \in F_{\hat\eta, v, x}$,
		\begin{equation} \label{eq:main4}
			\mathbb{P}\left[\left(\eta_{T_3}, X_{T_3}\right) = (\omega, x) \mid \left(\eta_{T_1}, X_{T_1}\right) = \left(\hat\eta, v\right)\right] \geq \left(\frac{C_2 e^{-\tau}}{2 n}\right) \pi_{G, p, q}(\omega).
		\end{equation}
		
Under a maximal coupling from time $T_1$ to time $T_3$, from (\ref{eq:main4}) 
and from the fact that $\pi_V(x) = 1/n$,
we have that
\begin{align*}
&\ \mathbb{P}\left[\left(\eta_{T_3}, X_{T_3}\right) = \left(\eta'_{T_3}, X'_{T_3}\right) \mid \left(\eta_{T_1}, X_{T_1}\right) = \left(\hat\eta, v\right)\right]\\
=&\ \sum_{(\omega, x) \in \{0, 1\}^E \times V} \min\left\{\mathbb{P}\left[\left(\eta_{T_3}, X_{T_3}\right) = (\omega, x) \mid \left(\eta_{T_1}, X_{T_1}\right) = \left(\hat\eta, v\right)\right], \pi_{G, p, q} (\omega) \pi_V (x)\right\} \\
\geq&
\sum_{x \in S_v} \sum_{\omega \in F_{\hat\eta,v,x}} 
\min \{\frac{C_2 e^{-\tau}}{2 n} \pi_{G,p,q}(\omega), \pi_{G,p,q}(\omega) (1/n)\}
\geq \ \frac{C_2 e^{-\tau}}{2 n} \sum_{x \in S_v} \pi_{G, p, q} \left(F_{\hat{\eta}, v, x}\right). 
\end{align*}
Since $|S_v| = n - o(n)$ and $\pi_{G, p, q} (F_{\hat{\eta}, v, x}) \geq \frac{3}{4}$ for all $x$,
this is at least  
$3 C_2 e^{-\tau}({n - o(n)})/(8n)
			\geq {C_2 e^{-\tau}}/{8}$.
Therefore, the probability of coupling at time $T_3$ is
		\begin{align*}
			& \ \mathbb{P}\left[\left(\eta_{T_3}, X_{T_3}\right) = \left(\eta'_{T_3}, X'_{T_3}\right) \mid \left(\eta_0, X_0\right) = (\eta, u)\right] \\
			\geq& \ \sum_{v \in \Vgood} \sum_{\hat\eta \in \{0, 1\}^E} \mathbb{P}\left[\left(\eta_{T_3}, X_{T_3}\right) = \left(\eta'_{T_3}, X'_{T_3}\right), \left(\eta_{T_1}, X_{T_1}\right) = (\hat\eta, v) \mid \left(\eta_0, X_0\right) = (\eta, u)\right] \\ 
			\geq& \ \sum_{v \in \Vgood} \sum_{\hat\eta \in \{0, 1\}^E} \frac{C_2 e^{-\tau}}{8} \ \mathbb{P} \left[\left(\eta_{T_1}, X_{T_1}\right) = (\hat\eta, v) \mid \left(\eta_0, X_0\right) = (\eta, u) \right] \\
			=& \ \sum_{v \in \Vgood} \frac{C_2 e^{-\tau}}{8} \ \mathbb{P} \left[X_{T_1} = v \mid \left(\eta_0, X_0\right) = (\eta, u) \right] \\
			=& \ \frac{C_2 e^{-\tau}}{8} \ \mathbb{P} \left[X_{T_1} \in \Vgood \mid \left(\eta_0, X_0\right) = (\eta, u) \right] 
			\geq \frac{C_1 C_2 e^{-\tau}}{8}
		\end{align*}
		where the last inequality follows from (\ref{eq:main1}).
		Hence, with probability $\Omega(1)$, the two chains couple at time $T_3$ and therefore the mixing time is $O(\log n)$.
	\end{proof}

    \section{Lower bounding walker trajectory probabilities: Proof of Lemma \ref{lem:te_prob_lw_bdd}}\label{sec:xxx}
    
	Consider a $d$-regular graph $G = (V, E)$. Given an edge $e \in E$ and $t' > t \geq 0$, by $\Ce (t, t')$ we denote the event that $\Ce$ rings in $(t, t')$. Furthermore, conditioned on the event $\Ce (t, t')$, let $\tau_e (t, t')$ denote the last ring time of $\Ce$ in $(t, t')$. Consider $\alpha \in \mathbb{N}$ and let $\overrightarrow{x_\alpha} = (x_0, x_1, \dots, x_{\alpha}) \in V^{\alpha + 1}$ be a walk on $G$, with loops allowed. Suppose that the first $\alpha$ walker clock rings occur at times $t_1, \dots, t_\alpha$. Let $\Delta_i \coloneqq (t_{i + 1} - t_i) \left(\frac{\varepsilon \log n}{\mu}\right)$. Note that since $\mu \geq \varepsilon \log n$, then $(t_{i + 1} - \Delta_i, t_{i + 1}) \subseteq (t_i, t_{i + 1})$.
    
    If the walker is at $x_i$ at time $t_i$ and $x_{i + 1} \neq x_i$, for the walker to transition to $x_{i + 1}$ at time $t_{i + 1}$, it suffices that the following events hold: the edge $\{x_i, x_{i + 1}\}$ refreshes during $(t_{i + 1} - \Delta_i, t_{i + 1})$ and it refreshes open during its last ring, and at time $t_{i + 1}$ the walker examines the edge $\{x_i, x_{i + 1}\}$, i.e., $V_{x_i} (t_{i+1}) = x_{i + 1}$ (and hence the walker traverses the edge since it is open). This motivates the following event definition.

    \begin{definition}[$\xi_{u \to u'} (t, t')$] \label{def:transition-event}
		Let $G = (V, E)$ be a $d$-regular graph. Let $t' > t \geq 0$ and $u, u' \in V$ such that $e \coloneqq \{u, u'\} \in E$. The event $\xi_{u \to u'} (t, t')$ is the event that all of the following hold: $V_u (t') = u'$, $\Ce (t, t')$, and $e$ is refreshed open at time $\tau_e(t, t')$.
	\end{definition}

    The following lemma, proved in Section \ref{sec:transition_event_lw_bdds}, gives a lower bound on the probability of the event $\xi_{u \to u'} (t, t')$, in terms of the edge refresh rate $\mu$ and the length of the time interval $(t, t')$. 

    \begin{restatable}{restatablelemma}{transitionLwBdd} \label{lem:transition_lw_bdd}	
		For every $d$-regular graph $G = (V, E)$ and for all $\mu > 0$, $t' > t \geq 0$, $\{u, u'\} \in E$ and $(\eta, v) \in \{0, 1\}^E \times V$, $
		\mathbb{P} \left[\xi_{u \to u'} \left(t,t'\right) \mid (\eta_t, X_t) = (\eta, v)\right] \geq ({\pmin}/{d}) (1-e^{-\mu (t' - t)})$.
	\end{restatable}

    On the other hand, suppose that the walker is at $x_i$ at time $t_i$ and that $x_{i + 1} = x_i$. For the walker to remain at $x_i$ at time $t_{i + 1}$, it suffices that for every neighbour $u_{i + 1}$ of $x_i$, if the walker examines the edge $\{x_i, u_{i + 1}\}$ at time $t_{i + 1}$ (i.e., $V_{x_i} (t_{i+1}) = u_{i + 1}$), then it holds that the edge $\{x_i, u_{i + 1}\}$ was refreshed during $(t_{i + 1} - \Delta_i, t_{i + 1})$ and it was refreshed closed during its last ring (and hence the walker remains at $x_i$ since the examined edge is closed). This motivates the following event definition.

    \begin{definition}[$\xi_{u \not\to u'} (t, t')$] \label{def:stationary-transition-event}
		Let $G = (V, E)$ be a $d$-regular graph. Let $t' > t \geq 0$ and $u, u' \in V$ such that $e \coloneqq \{u, u'\} \in E$. The event $\xi_{u \not\to u'} (t, t')$ is the event that all of the following hold: $V_u (t') = u'$, $\Ce (t, t')$, and $e$ is refreshed closed at time $\tau_e(t, t')$.
	\end{definition}

To obtain good lower bounds on the probability of $\xi_{u \not\to u'} (t, t')$, we need good lower bounds on the probability that $e = \{u, u'\}$ refreshes closed at its last refresh time $\tau_e(t, t')$ during $(t, t')$. Our analysis handles three separate cases,
depending on whether $e$ lies on a cycle, and on the length of that cycle.    In particular, we will require the following event for handling long cycles (see Section~\ref{sec:st_events} for further details of all the cases that arise).

    Given a configuration $\eta \in \{0, 1\}^E$ and a set of edges $H \subseteq E$, let $\eta^H$ be the configuration such that $\eta^H (e) = 1$ if $e \in H$, and $\eta^H (e) = \eta(e)$ if $e \in E \setminus H$. Given $v \in V$, let $E_v \coloneqq E\left(B_R (v)\right)$, and consider the continuous-time random cluster Glauber dynamics $\left(\eta_{t}^{E_v}\right)_{t \geq 0}$ with parameters $(p, q)$, that is, the dynamics where all edges in $E_v$ are wired open. We define $S_{t} \left(B_R (v), K\right)$ as the event that, in the graph $\left(V,\eta_{t}^{E_v} \setminus E_v\right)$, at most $K$ vertices  in  $\partial B_R (v)$ are in non-trivial components, and let $S_{t} (R, K) \coloneqq \cap_{v \in V} S_{t} (B_R(v), K)$. By $S_{[t', t'']} (R, K)$ we denote the event that $S_t (R, K)$ holds for all $t \in [t', t'']$. See Definition \ref{def:external-K-sparse-event} in Section \ref{sec:sparsity} for further details; we will eventually prove a slight adaptation of a result from Blanca and Gheissari (\cite{Blanca2021}, Theorem 5), which establishes that, with high probability over $G \sim \rrg$, the event $S_t (R, K)$ holds for a significantly long time interval after some burn-in time $\Cburn$.

    The following lemma, proved in Section \ref{sec:transition_event_lw_bdds}, gives a lower bound on the probability of $\xi_{u \not\to u'} (t, t')$, in terms of the edge refresh rate $\mu$, the length of the time interval $(t, t')$, and the probability of the $K$-sparsity event $S_{[t, t']} \left(R, K\right)$.

    \begin{restatable}{restatablelemma}{stationaryLwBdd} \label{lem:stationary_lw_bdd}
Let $n \in \mathbb{N}$ be sufficiently large and let $G = (V, E)$ be a $d$-regular graph on $n$ vertices. For all $\mu > 0$, $t' > t \geq 0$ such that $\mu (t' - t) > 2 \log 2$, all $e = \{u, u'\} \in E$ such that $u$ does not lie on a cycle of length less than~$r$ and $B_R(u)$ has at most one cycle, and all $(\eta, v) \in \{0, 1\}^E \times V$,
		\begin{align*}
			&\ \mathbb{P} \left[\xi_{u \not\to u'} (t, t') \mid (\eta_t, X_t) = (\eta, v)\right] \\
			\geq&\ \frac{1-\pmin}{d} \left(1 - \frac{1}{(\log n)^2} - 2(K + 1) e^{-\mu(t' - t)/2}\right) \mathbb{P}\left[S_{[t, t']} \left(R, K\right) \mid \left(\eta_t, X_t\right) = \left(\eta, v\right)\right].
		\end{align*}
	\end{restatable}

     Let $\mathcal{L}\left(\overrightarrow{x_\alpha}\right)$ be the set of indices such that $i \in \mathcal{L}\left(\overrightarrow{x_\alpha}\right)$ if, and only if, $x_i = x_{i+1}$. We call a sequence $\overrightarrow{u_\alpha} = (u_1, \dots, u_\alpha) \in V^\alpha$ a \textit{realisation} of the walk $\overrightarrow{x_\alpha}$ if, and only if, \[
	   u_{i+1} \in \begin{cases}
		\left\{u \in V\colon \{x_i, u\} \in E\right\}, & \mathrm{if} \ i \in \mathcal{L}\left(\overrightarrow{x_\alpha}\right) \\ 
		\{x_{i+1}\}, & \mathrm{otherwise}
	   \end{cases}.
	\]
	
	We denote the set of all realisations by $\mathcal{R}\left(\overrightarrow{x_\alpha}\right)$. Let $\ell\left(\overrightarrow{x_\alpha}\right) = \left|\mathcal{L}\left(\overrightarrow{x_\alpha}\right)\right|$. When there is no room for ambiguity, we will simply write $\mathcal{R}$, $\mathcal{L}$ and $\ell$. Observe that the number of realisations $|\mathcal{R}|$ for a walk $\overrightarrow{x_\alpha}$ is $d^\ell$.

    Thus, given that the walker is at $x_0$ at time $t_0$ (where $t_0 < t_1$), in order for the walker to follow the prescribed trajectory $\overrightarrow{x_\alpha} = (x_0, \dots, x_\alpha)$ at the ring times $t_1, \dots, t_\alpha$, it suffices that for some realisation $\overrightarrow{u_\alpha} \in \mathcal{R}$, for each $i \in \{0, \dots, \alpha - 1\}$, the appropriate transition event occurs between times $t_i$ and $t_{i + 1}$. Specifically, if $i \notin \mathcal{L}$, then the event $\xi_{x_i \to x_{i + 1}}(t_i, t_{i + 1})$ holds. On the other hand, if $i \in \mathcal{L}$, then the event $\xi_{x_i \not\to u_{i + 1}}(t_i, t_{i + 1})$ holds, where $u_{i + 1}$ is the neighbour examined at time $t_{i + 1}$ as specified by the realisation~$\overrightarrow{u_\alpha}$.
	
	Given a realisation $\overrightarrow{u_\alpha}$ of a walk $\overrightarrow{x_\alpha}$, $t_0 \geq 0$ and $\alpha$ times $\overrightarrow{t_\alpha} = (t_1, \dots, t_\alpha)$ such that $t_0 < t_1 < \dots < t_\alpha$, let $\Delta_i \coloneqq (t_{i + 1} - t_i) \left(\frac{\varepsilon \log n}{\mu}\right)$ for all $0 \leq i < \alpha$ and define the \textit{trajectory realisation event} \[\Xi \! \left(\overrightarrow{x_\alpha}, \overrightarrow{t_\alpha} ; \overrightarrow{u_\alpha}\right)
	\coloneqq \Biggl(\bigcap_{i \not\in \mathcal{L}} \xi_{x_i \to x_{i+1}} \left(t_{i + 1} - \Delta_i, t_{i+1}\right)\Biggr) \cap \Biggl(\bigcap_{i \in \mathcal{L}} \xi_{x_i \not\to u_{i+1}} \left(t_{i + 1} - \Delta_i, t_{i+1}\right)\Biggr).\]

    From Lemmas \ref{lem:transition_lw_bdd} and \ref{lem:stationary_lw_bdd} we can deduce the following lower bound on the probability of $\Xi\left(\overrightarrow{x_\alpha}, \overrightarrow{t_\alpha} ; \overrightarrow{u_\alpha}\right)$, which we prove in Section \ref{sec:lem4a}.

    \begin{restatable}{restatablelemma}{LemmaFourA} \label{lem:lem4a}
For all sufficiently large $n \in \mathbb{N}$, every $\mu \geq \varepsilon \log n $, every $\Tmax \geq T > 0$ and $t_0 > 0$, every $d$-regular $n$-vertex graph $G$ such that every radius-$R$ ball has at most one cycle, every $\alpha \in \mathbb{N}$, every $r$-acyclic walk $\overrightarrow{x_\alpha} \in V(G)^{\alpha + 1}$ and realisation $\overrightarrow{u_{\alpha}} \in \mathcal{R}$, every $\overrightarrow{t_\alpha} = (t_1, t_2, \dots, t_\alpha)$ satisfying $t_0 < t_1 < t_2 < \dots < t_\alpha < t_0 + T$ and $(t_{i + 1} - t_i) (\varepsilon \log n) > 2 \log 2$, and every $\eta \in \{0, 1\}^{E(G)}$, 
        \begin{align*}
            &\ \mathbb{P} \left[\Xi\left(\overrightarrow{x_\alpha}, \overrightarrow{t_\alpha} ; \overrightarrow{u_\alpha}\right) \mid (\eta_0, X_0) = (\eta, x_0)\right] \\
            \geq& \frac{\pmin^{\alpha - \ell} (1 - \pmin)^\ell}{d^\alpha} \prod_{i = 0}^{\alpha - 1} \left(1 - \tfrac{1}{(\log n)^2} - \frac{2(K+1)}{n^{(\varepsilon / 2) T_i}}\right) \! - \sum_{j \in \mathcal{L}} \mathbb{P}\left[\neg S_{[t_{j + 1} - \Delta_j, t_{j + 1}]} (R, K) \mid (\eta_0, X_0) = (\eta, x_0)\right]
        \end{align*}
        where, for all $0 \leq i \leq \alpha - 1$, $T_i \coloneqq t_{i + 1} - t_i$ and $\Delta_i \coloneqq T_i \left(\frac{\varepsilon \log n}{\mu}\right)$.
    \end{restatable}

    We define the \textit{trajectory event} $\Xi\left(\overrightarrow{x_\alpha}, \overrightarrow{t_\alpha}\right)
	\coloneqq \bigcup_{\overrightarrow{u_\alpha} \in \mathcal{R}} \Xi\left(\overrightarrow{x_\alpha}, \overrightarrow{t_\alpha} ; \overrightarrow{u_\alpha}\right)$. We note the following remark.

    \begin{remark} \label{rem:te_disjoint}
    For any two distinct realisations $\overrightarrow{u_\alpha}$ and $\overrightarrow{w_\alpha}$ of a walk $\overrightarrow{x_\alpha}$, the trajectory realisation events $\Xi\left(\overrightarrow{x_\alpha}, \overrightarrow{t_\alpha} ; \overrightarrow{u_\alpha}\right)$ and $\Xi\left(\overrightarrow{x_\alpha}, \overrightarrow{t_\alpha} ; \overrightarrow{w_\alpha}\right)$ are disjoint, since there is at least one position $i \in \mathcal{L}$ such that $u_{i+1} \neq w_{i+1}$, and therefore the events $V_{x_i}(t_{i+1}) = u_{i+1}$ and $V_{x_i}(t_{i+1}) = w_{i+1}$ are disjoint.
    \end{remark}
	
	Recall from Definition \ref{def:walker-trajectory-event} that $\widehat\Xi \left(\overrightarrow{x_\alpha}, t_0, T\right)$ is the event that $\Cw$ rings exactly $\alpha$ times during $(t_0, t_0 + T)$ and the walker follows the walk $\overrightarrow{x_\alpha}$. By our previous discussions, if the walker is at the initial vertex $x_0$ of the walk $\overrightarrow{x_\alpha}$ at time $t_0$, then if $\Cw$ rings exactly $\alpha$ times during $(t_0, t_0 + T)$ and for the $\alpha$ ring times $\overrightarrow{t_\alpha}$ in $(t_0, t_0 + T)$ of $\Cw$ the event $\Xi\left(\overrightarrow{x_\alpha}, \overrightarrow{t_\alpha}\right)$ holds, it follows that $\widehat\Xi \left(\overrightarrow{x_\alpha}, t_0, T\right)$ holds and at time $t_0 + T$ the walker is at the end vertex $x_\alpha$ of the walk, i.e., $X_{t_0 + T} = x_\alpha$. Consequently, Lemma \ref{lem:lem4a} will serve as the basis of the proof of Lemma \ref{lem:te_prob_lw_bdd}.

To pass from fixed ring times of the walker clock to the event that the clock rings $\alpha$ times in an interval of length $T$, we must average the fixed-time trajectory bounds over the simplex of possible walker clock ring times. The following estimate, key to the proof of Lemma \ref{lem:te_prob_lw_bdd}, says that after excluding very short gaps between successive walker clock rings, the loss from requiring examined edges to refresh during each gap is only  a constant factor. Lemma \ref{lem:simplex_integral} is proved in Appendix \ref{appendix:A}.
	
	\begin{restatable}{restatablelemma}{simplexIntegralLemma} \label{lem:simplex_integral}
Fix $C > 0$. There exists $c > 0$ such that the following holds: For all $c' > c$, all sufficiently large $n \in \mathbb{N}$, 
all
$T \geq ({1}/{(50 \ \pmin)}) \log_{d - 1} n$, and all $\alpha \in \mathbb{N}$ such that $\alpha \leq ({4}/{\pmin}) \log_{d - 1} n$, 
with $\delta \coloneqq {c'}/{(\log n)}$, 
\[\int_{\substack{T_0, \dots, T_\alpha \geq\delta \\ \sum_{i = 0}^\alpha T_i = T}} \ \prod_{i=0}^\alpha \left(1 - C n^{-(\varepsilon / 2) T_i}\right) \ \mathrm{d} \overrightarrow{T_\alpha} \geq \frac{1}{4} \frac{T^{\alpha}}{\alpha!} \left(1 - \frac{(\alpha + 1) \delta}{T}\right)^\alpha.\]
	\end{restatable}
	
	\begin{remark} \label{times_to_gaps_simplex}
		Given $t_0, T > 0$ and $\alpha \in \mathbb{N}$, the simplex $\{(t_1, \dots, t_\alpha) \colon t_0 < t_1 < \dots < t_\alpha < t_0 + T\}$ is naturally identified with the simplex $\{T_0, \dots, T_\alpha > 0 \colon \sum\limits_{i = 0}^\alpha T_i = T\}$ via the change of variables $T_i \coloneqq t_{i+1} - t_i$ for all $i \in \{0, 1, \dots, \alpha - 1\}$ and $T_\alpha = t_0 + T - t_\alpha$, which is linear with unit Jacobian and hence preserves $\alpha$-dimensional volume.
	\end{remark}
	
	We are now in a position to prove our main result for this section.
	
	\teProbLwBdd*
	
	\begin{proof}
		Let $\amax \coloneqq \lfloor ({4}/{\pmin}) \log_{d - 1} n\rfloor$ and $\Tmax \coloneqq ({4}/{\pmin}) \log_{d - 1} n$. By Lemma \ref{lem:sparse_sup_prob}, with probability $1 - O\left(n^{-1}\right)$ over $G \sim \rrg$, for all $\mu \geq \varepsilon \log n $, for all $t' > t \geq \Cburn$ such that $t' - t \leq \Tmax$ and for all $(\eta, v) \in \{0, 1\}^E \times V$,
		\begin{equation} \label{eq:wtp1}
			\mathbb{P} \left[\neg S_{[t' - \Delta, t']} (R, K) \mid (\eta_0, X_0) = (\eta, v)\right] = O\left(\tfrac{1}{n} \left(\tfrac{\pmin}{4 d}\right)^{\amax}\right)
		\end{equation}
		where $\Delta \coloneqq (t' - t) \left(\frac{\varepsilon \log n}{\mu}\right)$, noting that the environment dynamics evolve independently of the walker history. Furthermore, by Lemma 2.1 in \cite{Lubetzky_2010}, we have that, with probability $1 - o_n(1)$ over $G \sim \rrg$, every radius-$R$ ball in $G$ contains at most one cycle. 
        
        Hence, with probability $1 - o_n(1)$ over $G \sim \rrg$, both of these properties hold. Fix such a graph $G$. Fix $\eta \in \{0, 1\}^E$, $\mu \geq \varepsilon \log n $, $T > 0$ such that $({1}/{(50 \ \pmin)}) \log_{d - 1} n \leq T \leq \Tmax$, $\alpha \in \mathbb{N}$ such that $\alpha \leq \amax$, and let $\overrightarrow{x_\alpha} = (x_0, \dots, x_\alpha) \in V^{\alpha + 1}$ be an $r$-acyclic walk. 

        Let $\mathcal{N}_0$ and $\mathcal{N}_\alpha$ denote the events $N_\mathrm{w}[0, \Cburn] = 0$ and $N_\mathrm{w}(\Cburn, \Cburn + T) = \alpha$, respectively. These events are independent since they concern the ringing of the same Poisson clock over disjoint time intervals. Conditioned on the event $\mathcal{N}_0$, the walker does not move during $[0, \Cburn]$, and hence $X_{\Cburn} = x_0$. If, furthermore, the event $\mathcal{N}_\alpha$ holds and the $\alpha$ ordered ring times of $\Cw$ in $(\Cburn, \Cburn + T)$ are given by the vector $\overrightarrow{\tau_\alpha}$, then the event $\Xi\left(\overrightarrow{x_\alpha}, \overrightarrow{\tau_\alpha}\right)$ forces the walker to follow the trajectory $\overrightarrow{x_\alpha}$ during $(\Cburn, \Cburn + 
        T)$ such that $X_{\Cburn + T} = x_\alpha$. Therefore these events imply $\widehat\Xi \left(\overrightarrow{x_\alpha}, \Cburn, T\right)$.
		
		Conditioned on $\mathcal{N}_\alpha$, the $\alpha$ ordered ring times of $\Cw$ occurring during $(\Cburn, \Cburn + T)$ are uniformly distributed on the simplex $\mathfrak{T}_\alpha \coloneqq \{(t_1, \dots, t_\alpha) \colon \Cburn < t_1 < \dots < t_\alpha < \Cburn + T\}$ which has volume $\mathrm{Vol}\left(\mathfrak{T}_\alpha\right) = {T^\alpha} / {\alpha!}$. We therefore have that
		\begin{align*} 
			&\ \mathbb{P}\left[\widehat\Xi \left(\overrightarrow{x_\alpha}, \Cburn, T\right) \mid (\eta_0, X_0) = (\eta, x_0)\right] \nonumber \\
            \geq&\ \mathbb{P}\left[\mathcal{N}_0, \mathcal{N}_\alpha \mid (\eta_0, X_0) = (\eta, x_0)\right] \frac{\alpha!}{T^{\alpha}} \!\! \int\limits_{\overrightarrow{t_\alpha} \in \mathfrak{T}_\alpha} \!\!\! \mathbb{P} \left[\Xi(\overrightarrow{x_\alpha}, \overrightarrow{t_\alpha}) \mid \overrightarrow{\tau_\alpha} = \overrightarrow{t_\alpha}, \mathcal{N}_0, \mathcal{N}_\alpha, (\eta_0, X_0) = (\eta, x_0)\right] \mathrm{d} \overrightarrow{t_\alpha}.
		\end{align*}

        For fixed $\overrightarrow{t_\alpha} \in \mathfrak{T}_\alpha$, the event $\Xi(\overrightarrow{x_\alpha}, \overrightarrow{t_\alpha})$ is independent of the walker clock, and therefore we have that $\mathbb{P} \left[\Xi(\overrightarrow{x_\alpha}, \overrightarrow{t_\alpha}) \mid \overrightarrow{\tau_\alpha} = \overrightarrow{t_\alpha}, \mathcal{N}_0, \mathcal{N}_\alpha, (\eta_0, X_0) = (\eta, x_0)\right] = \mathbb{P} [\Xi(\overrightarrow{x_\alpha}, \overrightarrow{t_\alpha}) \mid (\eta_0, X_0) = (\eta, x_0)]$. Furthermore, since $\Cburn$ is constant, there exists some constant $C_1 \in (0, 1)$ such that $\mathbb{P}[\mathcal{N}_0] \geq C_1$ and hence, by independence of $\mathcal{N}_0$ and $\mathcal{N}_\alpha$, we have that $\mathbb{P}\left[\mathcal{N}_0, \mathcal{N}_\alpha \mid (\eta_0, X_0) = (\eta, x_0)\right] \geq C_1 \mathbb{P}\left[\mathcal{N}_\alpha\right]$.

        Combining everything together, it follows that
        \begin{equation}
            \mathbb{P}\left[\widehat\Xi \left(\overrightarrow{x_\alpha}, \Cburn, T\right) \mid (\eta_0, X_0) = (\eta, x_0)\right] \geq C_1 \mathbb{P}\left[\mathcal{N}_\alpha\right] \frac{\alpha!}{T^{\alpha}} \!\!\! \int\limits_{\overrightarrow{t_\alpha} \in \mathfrak{T}_\alpha} \!\!\! \mathbb{P} [\Xi(\overrightarrow{x_\alpha}, \overrightarrow{t_\alpha}) \mid (\eta_0, X_0) = (\eta, x_0)] \mathrm{d} \overrightarrow{t_\alpha}. \label{eq:wtp2}
        \end{equation}
		
		It will be convenient to define $t_{\alpha + 1} \coloneqq \Cburn + T$, and, given $\overrightarrow{t_\alpha} \in \mathfrak{T}_\alpha$, to define, for all $0 \leq i \leq \alpha$, $T_i \coloneqq t_{i + 1} - t_i$ and $\Delta_i \coloneqq T_i \left(\frac{\varepsilon \log n}{\mu}\right)$. By Lemma \ref{lem:simplex_integral}, there exists $c > 0$ such that for all $c' > c$, all $n$ sufficiently large, with $\delta \coloneqq \frac{c'}{\log n}$, 
		\begin{equation}
			\int_{\substack{T_0, \dots, T_\alpha \geq\delta \\ \sum_{i = 0}^\alpha T_i = T}} \ \prod_{i=0}^\alpha \left(1 - 4(K+1) n^{-(\varepsilon / 2) T_i}\right) \ \mathrm{d} \overrightarrow{T_\alpha} \geq \frac{1}{4} \frac{T^{\alpha}}{\alpha!} \left(1 - \frac{(\alpha + 1) \delta}{T}\right)^\alpha. \label{eq:wtp10}
		\end{equation}
		
		In particular, we will choose $c'$ sufficiently large such that $\delta (\varepsilon \log n) > 2 \log (8(K + 1))$. Let $\widetilde{\mathfrak{T}}_\alpha$ be the region of the simplex $\mathfrak{T}_\alpha$ such that $(t_1, \dots, t_\alpha) \in \mathfrak{T}_\alpha$ is in $\widetilde{\mathfrak{T}}_\alpha$ if, and only if, $\min\limits_{0 \leq i \leq \alpha} T_i \geq \delta$.
		
		Next fix a realisation $\overrightarrow{u_\alpha}$ and $\overrightarrow{t_\alpha} \in \widetilde{\mathfrak{T}}_\alpha$. For this choice of $\overrightarrow{t_\alpha}$ it follows that, for all $0 \leq i < \alpha$, since $T_i (\varepsilon \log n) > 2 \log (8(K + 1))$, then $T_i (\varepsilon \log n) > 2 \log 2$. By Lemma \ref{lem:lem4a} we have that,
        \begin{align}
            &\ \mathbb{P} \left[\Xi\left(\overrightarrow{x_\alpha}, \overrightarrow{t_\alpha} ; \overrightarrow{u_\alpha}\right) \mid (\eta_0, X_0) = (\eta, x_0)\right] \nonumber \\
            \geq& \frac{\pmin^{\alpha - \ell} (1 - \pmin)^\ell}{d^\alpha} \prod_{i = 0}^{\alpha - 1} \left(1 - \tfrac{1}{(\log n)^2} - \frac{2(K+1)}{n^{(\varepsilon / 2) T_i}}\right) \! - \sum_{j \in \mathcal{L}} \mathbb{P}\left[\neg S_{[t_{j + 1} - \Delta_j, t_{j + 1}]} (R, K) \mid (\eta_0, X_0) = (\eta, x_0)\right]. \label{eq:wtp6}
        \end{align}

        From (\ref{eq:wtp1}) we have, for all $j \in \mathcal{L}$, $\mathbb{P}\left[\neg S_{[t_{j + 1} - \Delta_j, t_{j + 1}]} (R, K) \mid (\eta_0, X_0) = (\eta, x_0)\right] = O\left(\frac{1}{n} \left(\frac{\pmin}{4 d}\right)^{\amax}\right)$. Furthermore, $|\mathcal{L}| \leq \alpha = O(\log n)$. Hence, in conjunction with (\ref{eq:wtp6}),
        \begin{align}
            & \ \mathbb{P} \left[\Xi\left(\overrightarrow{x_\alpha}, \overrightarrow{t_\alpha} ; \overrightarrow{u_\alpha}\right) \mid (\eta_0, X_0) = (\eta, x_0)\right] \nonumber \\
            \geq& \ \frac{\pmin^{\alpha - \ell} (1 - \pmin)^\ell}{d^\alpha} \prod_{i = 0}^{\alpha - 1} \left(1 - \tfrac{1}{(\log n)^2} - \frac{2(K+1)}{n^{(\varepsilon / 2) T_i}}\right) - \left(\frac{\pmin}{4d}\right)^{\amax} O\left(\tfrac{\log n}{n}\right). \label{eq:wtp7}
        \end{align}
		
		For all $0 \leq i < \alpha$, since $T_i (\varepsilon \log n) > 2 \log (8(K + 1))$ then \[2(K + 1)n^{- (\varepsilon / 2) T_i} = 2 (K + 1) e^{- T_i (\varepsilon / 2) \log n} \leq 1/4\] and hence for all $n$ sufficiently large we have that $1 - (\log n)^{-2} - 2(K + 1)e^{-\frac{\mu T_i}{2}} \geq 3/4 - (\log n)^{-2} \geq 1/4$. Furthermore, since $d \geq 3$ and $\pmin < \frac{1}{d-1}$, it follows that $1 - \pmin > \pmin$. Therefore, for all $n$ sufficiently large, we have that 
           $\pmin^{\alpha - \ell} (1 - \pmin)^\ell\prod_{i = 0}^{\alpha - 1}  \left(1 - \frac{1}{(\log n)^2} - 2(K+1)e^{-\frac{\mu T_i}{2}}\right) \geq \left(\pmin / 4\right)^{\amax}$. Hence from (\ref{eq:wtp7}), for all $n$ sufficiently large,
		\begin{align}
			\mathbb{P} \left[\Xi\left(\overrightarrow{x_\alpha}, \overrightarrow{t_\alpha} ; \overrightarrow{u_\alpha}\right) \mid (\eta_0, X_0) = (\eta, x_0)\right] \geq& (1 - O(\tfrac{\log n}{n})) \frac{\pmin^{\alpha - \ell} (1 - \pmin)^\ell}{d^\alpha} \prod_{i = 0}^{\alpha - 1} \left(1 - \tfrac{1}{(\log n)^2} - \frac{2(K+1)}{n^{(\varepsilon / 2) T_i}}\right) \nonumber \\
            >& \ \frac{1}{2} \frac{\pmin^{\alpha - \ell} (1 - \pmin)^\ell}{d^\alpha} \prod_{i = 0}^{\alpha - 1} \left(1 - \tfrac{1}{(\log n)^2} - \frac{2(K+1)}{n^{(\varepsilon / 2) T_i}}\right). \label{eq:wtp8}
		\end{align}
		
		By the arbitrariness of the realisation $\overrightarrow{u_\alpha}$ and the disjointness of trajectory realisation events from Remark \ref{rem:te_disjoint}, from (\ref{eq:wtp8}) we obtain that for all $n$ sufficiently large
		\begin{align}
			\mathbb{P} \left[\Xi\left(\overrightarrow{x_\alpha}, \overrightarrow{t_\alpha}\right) \mid (\eta_0, X_0) = (\eta, x_0)\right] \nonumber \geq& \! \sum_{\overrightarrow{u_\alpha} \in \mathcal{R}} \frac{1}{2} \frac{\pmin^{\alpha - \ell} (1 - \pmin)^\ell}{d^\alpha} \prod_{i = 0}^{\alpha - 1} \left(1 - \tfrac{1}{(\log n)^2} - \frac{2(K+1)}{n^{(\varepsilon / 2) T_i}}\right) \nonumber \\
			\geq& \ \frac{1}{4} \frac{\pmin^{\alpha - \ell} (1 - \pmin)^\ell}{d^{\alpha - \ell}} \prod_{i = 0}^{\alpha - 1} \left(1 - \frac{4(K+1)}{n^{(\varepsilon / 2) T_i}}\right)\label{eq:wtp9}
		\end{align}
		where the last inequality follows from the facts that $|\mathcal{R}| = d^\ell$, and that for all $x > 0$ and all $n$ sufficiently large the inequality $1 - \frac{1}{(\log n)^2} - 2(K+1)n^{-x} \geq \left(1 - \frac{1}{(\log n)^2}\right) \left(1 - 4(K+1)n^{-x} \right)$ holds and since $\alpha = O(\log n)$, then for all $n$ sufficiently large we have that $\left(1 - \frac{1}{(\log n)^2}\right)^\alpha > \frac{1}{2}$.
        
        In conjunction with (\ref{eq:wtp10}) and Remark \ref{times_to_gaps_simplex}, from (\ref{eq:wtp9}) we have that
		\begin{align}
			\int\limits_{\overrightarrow{t_\alpha} \in \mathfrak{T}_\alpha} \mathbb{P} \left[\Xi\left(\overrightarrow{x_\alpha}, \overrightarrow{t_\alpha}\right) \mid (\eta_0, X_0) = (\eta, x_0)\right] \mathrm{d} \overrightarrow{t_\alpha} \geq&\ \frac{1}{4} \frac{\pmin^{\alpha - \ell} (1 - \pmin)^\ell}{d^{\alpha - \ell}} \!\! \int\limits_{\overrightarrow{t_\alpha} \in \mathfrak{T}_\alpha} \!\! \prod_{i = 0}^{\alpha - 1} \left(1 - \frac{4(K+1)}{n^{(\varepsilon / 2) T_i}}\right) \mathrm{d} \overrightarrow{t_\alpha} \nonumber \\
			\geq& \ \frac{1}{16} \frac{T^\alpha}{\alpha!} \left(1 - \frac{(\alpha + 1) \delta}{T}\right)^\alpha \frac{\pmin^{\alpha - \ell} (1 - \pmin)^\ell}{d^{\alpha - \ell}}. \label{eq:wtp11}
		\end{align} 

        Since $\alpha \leq (4/\pmin) \log_{d-1} n, T \geq (1/(50 \ \pmin) \log_{d - 1} n)$ and $\delta = c' / \log n = (c' \log_{d-1} e) / \log_{d-1} n$, $\left(1 - \frac{(\alpha + 1) \delta}{T}\right)^\alpha \geq \left(1 - \left(\frac{c' \log_{d-1} e}{(1/(50 \ \pmin))}\right) \left(\frac{1 + (4/\pmin) \log_{d-1} n}{(\log_{d-1} n)^2}\right)\right)^{\amax} \geq \left(1 - \frac{400 c' \log_{d-1} e}{\log_{d-1} n}\right)^{(4/\pmin) \log_{d-1} n}$ for $n$ sufficiently large. Hence $\left(1 - \frac{(\alpha + 1) \delta}{T}\right)^\alpha$ is lower bounded by some constant $C_2 \in (0, 1)$ for $n$ sufficiently large. Therefore from (\ref{eq:wtp11}) it follows that for all $n$ sufficiently large,
        \begin{align}
			\frac{\alpha!}{T^{\alpha}} \int\limits_{\overrightarrow{t_\alpha} \in \mathfrak{T}_\alpha} \mathbb{P} \left[\Xi\left(\overrightarrow{x_\alpha}, \overrightarrow{t_\alpha}\right) \mid (\eta_0, X_0) = (\eta, x_0)\right] \mathrm{d} \overrightarrow{t_\alpha} \geq (C_2 / 16) \frac{\pmin^{\alpha - \ell} (1 - \pmin)^\ell}{d^{\alpha - \ell}}. \label{eq:wtp12}
		\end{align}
        
        The result follows by substituting (\ref{eq:wtp12}) in (\ref{eq:wtp2}).
	\end{proof}

    \subsection{Proof of Lemma \ref{lem:lem4a}} \label{sec:lem4a}

    \LemmaFourA*
    \begin{proof}
        We will introduce some compact notation. For all $i \in \mathcal{L}$, let $\xi_i \coloneqq \xi_{x_i \not\to u_{i+1}} \left(t_{i + 1} - \Delta_i, t_{i+1}\right)$ and $\gamma_i \coloneqq \frac{1-\pmin}{d} \left(1 - (\log n)^{-2} - 2(K + 1)n^{-(\varepsilon / 2) T_i}\right)$. For all $i \notin \mathcal{L}$, let $\xi_i \coloneqq \xi_{x_i \to x_{i+1}} \left(t_{i + 1} - \Delta_i, t_{i+1}\right)$ and $\gamma_i \coloneqq \frac{\pmin}{d} \left(1 - n^{-\varepsilon T_i}\right)$. For all $0 \leq i < \alpha$, let $\xi_{[i]} = \bigcap_{j = 0}^i \xi_j$. Observe that $\Xi\left(\overrightarrow{x_\alpha}, \overrightarrow{t_\alpha} ; \overrightarrow{u_\alpha}\right) = \xi_{[\alpha - 1]}$.
		
		By the properties of the graph $G$ and walk $\overrightarrow{x_\alpha}$, the radius-$R$ ball  around every vertex in the walk has at most one cycle and does not lie on a cycle of length less than $r$. Furthermore, for all $0 \leq i \leq \alpha - 1$, since $T_i (\varepsilon \log n) > 2 \log 2$ then $\mu \Delta_i > 2 \log 2$. Therefore, by Lemma \ref{lem:transition_lw_bdd} we have that if $i \notin \mathcal{L}$ then
		\begin{align*}
			\mathbb{P} \left[\xi_{[i]} \mid (\eta_0, X_0) = (\eta, x_0)\right] &= \mathbb{P} \left[\xi_i \mid \xi_{[i-1]}, (\eta_0, X_0) = (\eta, x_0)\right] \mathbb{P} \left[\xi_{[i-1]} \mid (\eta_0, X_0) = (\eta, x_0)\right] \\
			&\geq\ \gamma_i \ \mathbb{P} \left[\xi_{[i-1]} \mid (\eta_0, X_0) = (\eta, x_0)\right].
		\end{align*}
		
		Otherwise by Lemma \ref{lem:stationary_lw_bdd}, if $i \in \mathcal{L}$ we have that
		\begin{align*}
			&\ \mathbb{P} \left[\xi_{[i]} \mid (\eta_0, X_0) = (\eta, x_0)\right] \\
			=&\ \mathbb{P} \left[\xi_i  \mid \xi_{[i-1]}, (\eta_0, X_0) = (\eta, x_0)\right] \mathbb{P} \left[\xi_{[i-1]} \mid (\eta_0, X_0) = (\eta, x_0)\right] \\
			\geq&\ \gamma_i \ \mathbb{P} \left[S_{[t_{i + 1} - \Delta_i, t_{i+1}]} (R, K), \xi_{[i-1]} \mid (\eta_0, X_0) = (\eta, x_0)\right] \\
			\geq&\ \gamma_i \bigl(\mathbb{P} \left[\xi_{[i-1]} \mid (\eta_0, X_0) = (\eta, x_0)\right]\! - \mathbb{P}\left[\neg S_{[t_{i + 1} - \Delta_i, t_{i+1}]} (R, K) \mid (\eta_0, X_0) = (\eta, x_0)\right]\bigr).
		\end{align*}
		
        Therefore we inductively have that 
		\begin{equation*}
			\mathbb{P} \left[\xi_{[\alpha - 1]} \mid (\eta_0, X_0) = (\eta, x_0)\right] \geq \prod_{i = 0}^{\alpha - 1} \gamma_i - \sum_{j \in \mathcal{L}} \left(\prod_{i = j}^{\alpha - 1} \gamma_i \right) \mathbb{P}\left[\neg S_{[t_{j + 1} - \Delta_j, t_{j + 1}]} (R, K) \mid (\eta_0, X_0) = (\eta, x_0)\right].
		\end{equation*}
        
        Since $\gamma_i \in [0, 1]$ for all $0 \leq i \leq \alpha - 1$, we obtain that
        \begin{equation} \label{eq:l4a1}
			\mathbb{P} \left[\xi_{[\alpha - 1]} \mid (\eta_0, X_0) = (\eta, x_0)\right] \geq \prod_{i = 0}^{\alpha - 1} \gamma_i - \sum_{j \in \mathcal{L}} \mathbb{P}\left[\neg S_{[t_{j + 1} - \Delta_j, t_{j + 1}]} (R, K) \mid (\eta_0, X_0) = (\eta, x_0)\right].
		\end{equation}

        Now, for all $n$ sufficiently large, since for all $0 \leq i \leq \alpha - 1$ we have that $T_i \varepsilon \log n > 2 \log 2$, then $1 - n^{-\varepsilon T_i} \geq 1 - n^{-(\varepsilon / 2) T_i} \geq 1 - (\log n)^{-2} - 2(K+1) n^{-(\varepsilon / 2) T_i}$. Then for all $i \notin \mathcal{L}$ it follows that $\gamma_i = \frac{\pmin}{d} \left(1 - n^{-\varepsilon T_i}\right) \geq \frac{\pmin}{d} \left(1 - (\log n)^{-2} - 2(K+1)n^{-(\varepsilon / 2) T_i}\right)$. By this lower bound on $\gamma_i$ when $i \notin \mathcal{L}$ and the definition of $\gamma_i$ when $i \in \mathcal{L}$, from (\ref{eq:l4a1}) it follows that
        \begin{align*}
            &\ \mathbb{P} \left[\Xi\left(\overrightarrow{x_\alpha}, \overrightarrow{t_\alpha} ; \overrightarrow{u_\alpha}\right) \mid (\eta_0, X_0) = (\eta, x_0)\right] \\
            \geq& \frac{\pmin^{\alpha - \ell} (1 - \pmin)^\ell}{d^\alpha} \prod_{i = 0}^{\alpha - 1} \left(1 - \tfrac{1}{(\log n)^2} - \frac{2(K+1)}{n^{(\varepsilon / 2) T_i}}\right) \! - \sum_{j \in \mathcal{L}} \mathbb{P}\left[\neg S_{[t_{j + 1} - \Delta_j, t_{j + 1}]} (R, K) \mid (\eta_0, X_0) = (\eta, x_0)\right]
        \end{align*}
        as required.
    \end{proof}

	\section{Coupling success probability lower bounds: Lemmas \ref{lem:phase1_bdd} and \ref{lem:phase2_bdd}}
    \label{sec:te_full_lw_bdd}\label{sec:phase1_bdd}\label{sec:phase2_bdd}
	
In this section we will prove the main technical results for our coupling, namely Lemmas \ref{lem:phase1_bdd} and \ref{lem:phase2_bdd}. We will first prove Lemma \ref{lem:te_full_prob_lw_bdd} --- Lemmas \ref{lem:phase1_bdd} and Lemma \ref{lem:phase2_bdd} are 
applications of this lemma. Lemma \ref{lem:te_full_prob_lw_bdd} says that if a set $S$ of vertices has a positive fraction $\zeta \in (0, 1)$ of $r$-acyclic $(h, i)$-constrained walks from a vertex $u$, for suitable values of $h$ and $i$, then 
with probability proportional to~$\zeta$, 
after an appropriate amount of time, 
the walker transitions from $u$ to $S$.
    
In order to prove Lemma \ref{lem:te_full_prob_lw_bdd} (and hence Lemmas \ref{lem:phase1_bdd} and \ref{lem:phase2_bdd}), we first require the following two technical lemmas which are proved in  Appendix \ref{appendix:B}. Lemma \ref{lem:f_alpha_properties} will be used to show that the dominant contribution in the lower bound of Lemma \ref{lem:te_full_prob_lw_bdd} comes from walks of length $\Theta(\log n)$ which are $(h, i)$-constrained for ``typical" values of $h$ and $i$, namely values in some window of order $\Theta(\sqrt{\log n})$. Lemma \ref{lem:poisson_concentration} ensures that the walker clock has constant probability of ringing the desired number of times in the desired time window.
	
	\begin{restatable}{restatablelemma}{fAlphaLem} \label{lem:f_alpha_properties}
		Let $a, b\in (0, 1)$ such that $a + b < {1}/{2}$ and $\alpha \in \mathbb{N}$. For all $x, y \in \mathbb{R}_+$ such that $x + y < \alpha$, define: 
		\[f_\alpha(x,y) \coloneqq x \log\left({a}/{x}\right) + y\log \left({b}/{y}\right) + (\alpha - x - y)\log\left(\frac{1-a-b}{\alpha - x - y}\right).\]
		Then,
		\begin{enumerate}[(i)]
			\item $f_\alpha(x, y)$ attains a maximum at $(x, y) = \left(a \alpha, b \alpha\right)$.
			\item For $\alpha$ sufficiently large, for all $0 \leq x, y \leq \sqrt{\alpha}$, $f_\alpha\left(a \alpha, b \alpha\right) - f_\alpha\left(a (\alpha + x), b (\alpha + y)\right) \leq \frac{a+b}{2(1-a-b)}$.
		\end{enumerate}	
	\end{restatable}
	
	\begin{restatable}{restatablelemma}{poissonConcentration} \label{lem:poisson_concentration}
		Let $\Cmax > \Cmin > 0$ and $\Cmid > 0$ such that $\Cmid \leq \Cmax - \Cmin$. Let $n \in \mathbb{N}$ be sufficiently large and let $\kmin, \kmax \in \mathbb{N}$ such that $\Cmin \log n \leq \kmin < \kmax \leq \Cmax \log n$ and $\kmax - \kmin \geq \Cmid \log n$. Let $Y \sim \mathsf{Poisson}(1)$ and let $N_Y (T)$ denote the number of rings of $Y$ during an interval of length $T > 0$. Then, $\mathbb{P}\left[\kmin \leq N_Y \left(\frac{\kmin + \kmax}{2}\right) \leq \kmax\right] \geq \frac{1}{2}$.
	\end{restatable}
	
	We are now in a position to prove Lemma \ref{lem:te_full_prob_lw_bdd}.

    \begin{lemma} \label{lem:te_full_prob_lw_bdd}
		There exists a constant $C \in (0, 1)$ such that for all $n \in \mathbb{N}$ sufficiently large, with probability $1 - o_n(1)$ over $G \sim \rrg$, for all $\mu \geq \varepsilon \log n $, and for every $S \subseteq V$ and $u \in V$ such that there exists $\amin, \amax \in \mathbb{N}$ and $\zeta \in (0, 1)$ satisfying the following:
		\begin{enumerate}[(i)]
			\item $({1}/{(50 \ \pmin)}) \log_{d - 1} n \leq \amin < \amax \leq ({4}/{\pmin}) \log_{d - 1} n$ and $\amax - \amin \geq \frac{\log_{d-1} n}{100 \ \pmin}$
			\item for all $\alpha, h, i \in \mathbb{N}$ such that $\alpha \in [\amin, \amax]$, $h \in \left[\left((d - 2) / d\right) \pmin \alpha, \left((d - 2) / d\right) \pmin \left(\alpha + \sqrt{\alpha}\right)\right]$ and $i \in \left[(\pmin / d) \alpha, (\pmin / d) \left(\alpha + \sqrt{\alpha}\right)\right]$, there are at least $\zeta \ \widetilde{\omega}_{h, i}$ walks from $u$ to a vertex in $S$ which are $r$-acyclic and are $(h, i)$-constrained walks
		\end{enumerate}
		then for all $\eta \in \{0, 1\}^E$, $\mathbb{P} \left[X_{\Cburn + \frac{\amax + \amin}{2}} \in S \mid \left(\eta_0, X_0\right) = (\eta, u)\right] \geq C \zeta$.
    \end{lemma}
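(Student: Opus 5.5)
Set $T \coloneqq (\amax + \amin)/2$ and work on the high-probability set of graphs on which Lemma~\ref{lem:te_prob_lw_bdd} holds. The plan is to lower bound the target probability by a sum over disjoint trajectory events. For each $\alpha \in [\amin, \amax]$ and each $r$-acyclic walk $\overrightarrow{x_\alpha}$ from $u$ ending in $S$, the events $\widehat\Xi(\overrightarrow{x_\alpha}, \Cburn, T)$ are pairwise disjoint, because they differ either in the number of rings or in the trajectory. Each of them implies $X_{\Cburn+T} \in S$. Since $T$ and every such $\alpha$ lie in the admissible ranges of Lemma~\ref{lem:te_prob_lw_bdd}, that lemma gives
\[
\mathbb{P}[X_{\Cburn+T}\in S] \ge C_0 \sum_{\alpha=\amin}^{\amax} \mathbb{P}[N_\mathrm{w}(\Cburn,\Cburn+T)=\alpha] \sum_{\overrightarrow{x_\alpha}} \frac{\pmin^{\alpha-\ell}(1-\pmin)^{\ell}}{d^{\alpha-\ell}} .
\]

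The next step is to organise the inner sum by the NBWT structure. A walk of length $\alpha$ from $u$ with $\ell$ stationary steps, whose non-stationary part is $(h,i)$-constrained, has $\ell = \alpha - h - 2i$. Its weight is $(\pmin/d)^{h+2i}(1-\pmin)^{\alpha-h-2i}$, multiplied by the number $\binom{\alpha}{h+2i}$ of placements of the stationary steps. Placements along an $r$-acyclic $(h,i)$-constrained walk keep it $r$-acyclic, since they visit the same vertices. Hypothesis (ii) gives at least $\zeta\widetilde\omega_{h,i}$ such non-lazy walks for $h,i$ in the typical windows. Using the ballot-type formula for $\widetilde\omega_{h,i}$ (about $\binom{h+2i}{i}(d-1)^{h+i}$ up to a factor $\frac{h+1}{h+i+1}$, which is $\Theta(1)$ here because $h$ and $i$ are of the same order), the contribution becomes a multinomial expression in $(x,y) = (h+i, i)$ of the form
\[
\zeta\,\Theta(1)\binom{\alpha}{x,\,y,\,\alpha-x-y} a^{x} b^{y} (1-a-b)^{\alpha-x-y}, \qquad a = \tfrac{(d-1)\pmin}{d},\ b = \tfrac{\pmin}{d}.
\]
Here $a$ counts the forward moves, weighted by $(d-1)/d \cdot \pmin$, and $b$ counts the backward moves, weighted by $\pmin/d$. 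Note that $a+b = \pmin < 1/2$, as Lemma~\ref{lem:f_alpha_properties} requires.

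By Stirling, the multinomial term equals $\exp(f_\alpha(x,y))$ divided by a polynomial factor of order $\alpha$. Lemma~\ref{lem:f_alpha_properties} says this is maximised at $(a\alpha, b\alpha)$ with value $0$, and stays within $e^{-O(1)}$ of the maximum when $x,y$ are shifted by up to $\sqrt\alpha$. The windows in (ii) correspond exactly to $h+i$ and $i$ ranging over such shifts: $h + i \approx (d-1)\pmin\alpha/d$ and $i \approx \pmin\alpha/d$. The window contains $\Theta(\alpha)$ lattice points, each contributing $\Theta(1/\alpha)$. Summing, the inner sum is at least $c\,\zeta$ for every $\alpha\in[\amin,\amax]$.

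Finally, Lemma~\ref{lem:poisson_concentration} applies with $\kmin=\amin$, $\kmax=\amax$ and the interval length $T=(\amin+\amax)/2$, its constants supplied by (i). It gives $\mathbb{P}[\amin\le N_\mathrm{w}\le\amax]\ge 1/2$, so the total is at least $C_0 c\zeta/2$.

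The main obstacle is the combinatorial step. It requires checking that the window in $(h,i)$ translates precisely into the $\sqrt\alpha$-box in $(x,y)$ of Lemma~\ref{lem:f_alpha_properties}, with the right sign of the shifts. It also requires controlling the Stirling and ballot prefactors uniformly, so that their product with the $\Theta(\alpha)$ window size is bounded below by a constant. I would first verify the exact identity relating $\widetilde\omega_{h,i}$ to the multinomial, then apply Stirling with explicit error terms.
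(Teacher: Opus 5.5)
Your proposal is correct and follows the same route as the paper's proof. Both use disjoint trajectory events and Lemma~\ref{lem:te_prob_lw_bdd}, regroup by $(h,i,\ell)$ with $\binom{\alpha}{h+2i}$ placements of stationary steps and the ballot formula for $\widetilde{\omega}_{h,i}$ (giving trinomial weights with $a=(d-1)\pmin/d$, $b=\pmin/d$), then apply Stirling and Lemma~\ref{lem:f_alpha_properties} over the $\Theta(\alpha)$-point window, and finish with Lemma~\ref{lem:poisson_concentration}; the only slip is cosmetic, since with $f_\alpha$ as defined in the paper $\exp(f_\alpha(a\alpha,b\alpha))=\alpha^{-\alpha}$, which is cancelled by the $\alpha^{\alpha}$ from Stirling exactly as your ``value $0$'' normalisation implicitly assumes.
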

	
	\begin{proof}
		By Lemma \ref{lem:te_prob_lw_bdd}, there exists $C_0 \in (0, 1)$ such that for $n \in \mathbb{N}$ sufficiently large, with probability $1 - o_n(1)$ over $G = (V, E) \sim \rrg$, for every $\mu \geq \varepsilon \log n $, $T > 0$ such that $({1}/{(50 \ \pmin)}) \log_{d - 1} n \leq T \leq ({4}/{\pmin}) \log_{d - 1} n$ and $\alpha \in \mathbb{N}$ such that $\alpha \leq ({4}/{\pmin}) \log_{d - 1} n$, and for all $r$-acyclic walks $\overrightarrow{x_\alpha} \in V^{\alpha + 1}$ and $\eta \in \{0, 1\}^E$,
		\begin{equation} \label{eq:dcw0}
			\mathbb{P} \left[\widehat\Xi\left(\overrightarrow{x_\alpha}, \Cburn, T\right) \mid (\eta_0, X_0) = (\eta, x_0)\right] \geq C_0 \mathbb{P}\left[N_\mathrm{w}(\Cburn, \Cburn + T) = \alpha\right] \frac{\pmin^{\alpha - \ell}(1 - \pmin)^{\ell}}{d^{\alpha - \ell}}. 
		\end{equation}
		
		Fix such a graph $G$ and $\mu \geq\varepsilon \log n $. Let $S \subseteq V$ and $u \in V$ such that there exists $\amin, \amax \in \mathbb{N}$ and $\zeta \in (0, 1)$ satisfying conditions (i) and (ii) in the theorem. For convenience, for all $x > 0$ define the functions
		\begin{enumerate}[i.]
			\item $\hmin(x) \coloneqq \left(\frac{d-2}{d}\right) \pmin x$ and $\hmax(x) \coloneqq \left(\frac{d-2}{d}\right) \pmin \left(x + \sqrt{x}\right)$; 
			\item $\imin(x) \coloneqq \frac{\pmin}{d} x$ and $\imax(x) \coloneqq \frac{\pmin}{d} \left(x + \sqrt{x}\right)$.
		\end{enumerate}
		
		Fix $T \coloneqq \frac{\amin + \amax}{2}$, and for all $\alpha \in \mathbb{N}$ such that $\alpha \in [\amin, \amax]$, let \[\Range{\alpha} \coloneqq \left\{(h, i, \alpha - h - 2i) \in \mathbb{N}^3 \colon \hmin(\alpha) \leq h \leq \hmax(\alpha), \imin(\alpha) \leq i \leq \imax(\alpha)\right\}.\]
		
		Furthermore, for all $(h, i, \ell) \in \Range{\alpha}$, let $\W_{h, i, \ell}$ be the collection of walks of length $\alpha = h + 2i + \ell$ from $u$ to $S$ which are $r$-acyclic, have exactly $h + 2i$ transitions corresponding to an $(h, i)$-constrained walk, and exactly $\ell$ stationary transitions. Noting that there are $\binom{h + 2i + \ell}{h + 2i}$ ways of inserting $\ell$ stationary transitions along an $(h, i)$-constrained walk, by (ii) it follows that $\left|\W_{h, i, \ell}\right | \geq \zeta \binom{h + 2i + \ell}{h+2i} \widetilde{\omega}_{h, i}$.
		
		In particular then, by (\ref{eq:dcw0}), we have that for all $\alpha \in \mathbb{N}$ such that $\amin \leq \alpha \leq \amax$, for all $(h, i, \ell) \in \Range{\alpha}$, for all $\overrightarrow{x_\alpha} \in \W_{h, i, \ell}$ and for all $\eta \in \{0, 1\}^E$:
		\begin{equation} \label{eq:dcw1}
			\mathbb{P} \left[\widehat\Xi\left(\overrightarrow{x_\alpha}, \Cburn, T\right) \mid \left(\eta_0, X_0\right) = (\eta, u)\right] \geq C_0 \mathbb{P} \left[N_\mathrm{w} \left(\Cburn, \Cburn + T\right) = \alpha\right] \left(\frac{\pmin}{d}\right)^{h + 2i} (1 - \pmin)^\ell.
		\end{equation}
		
		Next fix $\eta \in \{0, 1\}^E$, $\alpha \in \mathbb{N}$ and a walk $\overrightarrow{x_\alpha}$ starting at $u$ and ending at some vertex in $S$. Recall that, by our discussion in Section 5, that if $\widehat \Xi\left(\overrightarrow{x_\alpha}, \Cburn, \Cburn + T\right)$ holds then the walker follows the trajectory $\overrightarrow{x_\alpha}$ during the time interval $(\Cburn, \Cburn + T)$ and is at the end-vertex of the walk, i.e. in $S$, at time $\Cburn + T$. We therefore have that,
		\begin{equation}
			\mathbb{P} \left[X_{\Cburn + T} \in S \mid \left(\eta_0, X_0\right) = (\eta, u)\right] 	\geq \mathbb{P} \biggl[ \ \bigcup_{\alpha = \amin}^{\amax} \bigcup_{(h, i, \ell) \in \Range{\alpha}} \bigcup_{\overrightarrow{x_\alpha} \in \W_{h, i, \ell}} \!\!\!\! \widehat\Xi\left(\overrightarrow{x_\alpha}, \Cburn, T \right)\bigg| \left(\eta_0, X_0\right) = (\eta, u) \biggr]. \label{eq:dcw3}
		\end{equation}
		
		Next note that for any $\alpha, \alpha' \in \mathbb{N}$ such that $\alpha \neq \alpha'$, the events $N_\mathrm{w}(\Cburn, \Cburn + T) = \alpha$ and $N_\mathrm{w}(\Cburn, \Cburn + T) = \alpha'$ are disjoint. Also, for any $(h, i, \ell), (h', i', l') \in \Range{\alpha}$ such that $(h, i, \ell) \neq (h', i', l')$, given any $\overrightarrow{x_\alpha} \in \W_{h, i, \ell}$ and $\overrightarrow{y_\alpha} \in \W_{h', i', l'}$, the events $\widehat\Xi\left(\overrightarrow{x_\alpha}, \Cburn, T\right)$ and $\widehat\Xi\left(\overrightarrow{y_\alpha}, \Cburn, T\right)$ are disjoint. This follows from the fact that either $h \neq h'$ and hence the number of non-stationary transitions is distinct, or else, if $h' = h$ then $l \neq l'$ and therefore the number of stationary transitions is distinct. 
		
		Lastly, we have that for any $\overrightarrow{x_\alpha}, \overrightarrow{y_\alpha} \in \W_{h, i, \ell}$ such that $\overrightarrow{x_\alpha} \neq \overrightarrow{y_\alpha}$, the events $\widehat\Xi\left(\overrightarrow{x_\alpha}, \Cburn, T\right)$ and $\widehat\Xi\left(\overrightarrow{y_\alpha}, \Cburn, T\right)$ are disjoint. This follows from the fact that either the non-stationary transitions of both walks differ in their indices, or else they match and hence the walks must differ in at least one vertex.
		
		Therefore from (\ref{eq:dcw3}) we have that,
		\begin{equation}
			\mathbb{P} \left[X_{\Cburn + T} \in S  \mid  \left(\eta_0, X_0\right) = (\eta, u)\right] \geq \sum_{\alpha = \amin}^{\amax} \sum_{(h, i, \ell) \in \Range{\alpha}} \sum_{\overrightarrow{x_\alpha} \in \W_{h, i, \ell}} \mathbb{P} \left[ \widehat\Xi\left(\overrightarrow{x_\alpha}, \Cburn, T\right) \mid  \left(\eta_0, X_0\right) = (\eta, u) \right]. \label{eq:dcw5}
		\end{equation}
		
		Together with (\ref{eq:dcw1}) and the fact that for all $(h, i, \ell) \in \Range{\alpha}$ we have that $|\W_{h, i, \ell}| > \zeta \binom{h + 2i + \ell}{h + 2i} \widetilde{\omega}_{h, i}$,
		\begin{align}
			&\ \mathbb{P} \left[X_{\Cburn + T} \in S \mid \left(\eta_0, X_0\right) = (\eta, u)\right] \nonumber \\
			\geq& \ C_0 \ \zeta \sum_{\alpha = \amin}^{\amax} \mathbb{P} \left[N_\mathrm{w} (\Cburn, \Cburn + T) = \alpha\right] \sum_{(h, i, \ell) \in \Range{\alpha}} \binom{h + 2i + \ell}{h+2i}\widetilde{\omega}_{h, i} \left(\frac{\pmin}{d}\right)^{h+2i} (1 - \pmin)^\ell. \label{eq:dcw6}
		\end{align}
		
		Let $a \coloneqq \left(1-\frac{1}{d}\right) \pmin$ and $b \coloneqq \frac{\pmin}{d}$. By the lower bound on $\widetilde{\omega}_{h,i}$ in Lemma \ref{lem:walk_counts}, our choice $a$ and $b$, Stirling's approximation for the factorial, along with the definition of $f_\alpha$ in Lemma \ref{lem:f_alpha_properties}, we have that for all $n$ sufficiently large:
		\begin{align*}
			&\sum_{(h, i, \ell) \in \Range{\alpha}} \binom{h + 2i + \ell}{h+2i}\widetilde{\omega}_{h, i} \left(\frac{\pmin}{d}\right)^{h+2i} (1 - \pmin)^\ell \\
			\geq& \sum\limits_{(h, i, \ell) \in \Range{\alpha}} \frac{h+1}{h+i+1} \frac{(h + 2i + \ell)!}{(h+i)! \ i! \ \ell!} \left(\frac{(d-1)\pmin}{d}\right)^{h+i} \left(\frac{\pmin}{d}\right)^{i} (1-\pmin)^\ell \\
			\geq& \ \frac{1}{2 \pi} \ \alpha^{\alpha} \! \sum\limits_{(h, i, \ell) \in \Range{\alpha}} \frac{h+1}{h+i+1} \sqrt{\frac{\alpha}{(h+i) \ i \ \ell}} \left(\frac{a}{h+i}\right)^{h+i} \left(\frac{b}{i}\right)^{i} \left(\frac{1-a-b}{\alpha - h - 2i}\right)^{\alpha - h - 2i} \\
			\geq& \ C_1 \ \alpha^{\alpha - 1} \! \sum\limits_{(h, i, \ell) \in \Range{\alpha}} \exp\Big(f_\alpha (h+i, i)\Big)
		\end{align*}
		where $C_1 \coloneqq \frac{1}{3200 \pi}$, since $\sqrt{\frac{\alpha}{(h+i) \ i \ \ell}} \geq \frac{1}{\alpha}$ and $\frac{h+1}{h+i+1} \geq \frac{\hmin\left(({1}/{(50 \ \pmin)}) \log n\right)+1}{\hmax\left(({4}/{\pmin}) \log_{d-1} n\right) + \imax\left(({4}/{\pmin}) \log_{d-1} n\right) + 1} \geq 2 \pi C_1$.
		
		Letting $C_2 \coloneqq \left(\frac{d-2}{2}\right)\left(\frac{\pmin}{d}\right)^2$, for all $n$ sufficiently large we have $|\Range{\alpha}| \geq C_2 \alpha$. Also, letting $C_3 \coloneqq \exp\left(-\frac{a+b}{2(1-a-b)}\right)$, by Lemma \ref{lem:f_alpha_properties} we have that for all $(h, i, \ell) \in \Range{\alpha}$: $\exp\left(f_\alpha (h + i, i)\right) \geq C_3 \exp\left(f_\alpha (a \alpha, b \alpha)\right) \geq C_3 \alpha^{-\alpha}$. Therefore,
		\[\sum\limits_{(h, i, \ell) \in \Range{\alpha}} \exp\left(f_\alpha (h + i, i)\right) \geq C_2 C_3 \alpha \exp\left(f_\alpha (a \alpha, b \alpha)\right) \geq \frac{C_2 C_3}{\alpha^{\alpha-1}}.\]
		
		Combining everything together, we obtain that:
		\begin{equation} \label{eq:dcw7}
			\sum_{(h, i, \ell) \in \Range{\alpha}} \binom{h + 2i + \ell}{h+2i}\widetilde{\omega}_{h, i} \left(\frac{\pmin}{d}\right)^{h+2i} (1 - \pmin)^\ell \geq C_1 C_2 C_3.
		\end{equation}
		
		From (\ref{eq:dcw6}) and (\ref{eq:dcw7}), defining $C \coloneqq \frac{C_0 C_1 C_2 C_3}{2}$, we have that
		\begin{align*}
			\mathbb{P} \left[X_{\Cburn + T} \in S \mid \left(\eta_0, X_0\right) = (\eta, u)\right] &\geq 2 C \zeta \sum_{\alpha = \amin}^{\amax} \mathbb{P} \left[N_\mathrm{w} (\Cburn, \Cburn + T) \alpha\right] \\
			&= 2 C \zeta  \cdot \mathbb{P} \left[N_\mathrm{w} (\Cburn, \Cburn + T) \in \left[\amin, \amax\right]\right] \\
			&\geq C \zeta
		\end{align*}
		where the last inequality follows from Lemma \ref{lem:poisson_concentration}. The result follows.
	\end{proof}
	
	We are now in a position to prove Lemmas \ref{lem:phase1_bdd} and \ref{lem:phase2_bdd}. Recall that we fixed the times
	\begin{align*}
		T_1 &\coloneqq (1 + \Cburn) + \left({d}/{(d - 2)}\right) \left({1}/{\pmin}\right) \left({1}/{40}\right) \log_{d - 1} n \\
		\mathrm{and} \quad T_2 &\coloneqq T_1 + \Cburn + \left({d}/{(d - 2)}\right) \left({1}/{\pmin}\right) \left(\left({81}/{80}\right) \log_{d- 1} n + \log_{d-1} \log n\right).
	\end{align*}
	
	\phaseOneBound*
	
	\begin{proof}
		By Lemma \ref{lem:phase1_walk_counts}, with probability $1 - o_n(1)$ over $G \sim \rrg$, for any vertex $u$ and $h, i \in \mathbb{N}$ such that ${R}/{10} \leq h \leq {R}/{5}$ and $i \leq {R}/{5}$, there are at least $d - 2$ neighbours of $u$ from which there are $\frac{\widetilde{\omega}_{h, i}}{2}$ walks which are $(h, i)$-constrained, $r$-acyclic and end at a vertex in $\Vgood$.
		
		Fix such a graph $G$, vertex $u \in V$, and define $\amin \coloneqq \left\lceil\frac{1}{50} \left({d}/{d - 2}\right) \left({1}/{\pmin}\right) \log_{d - 1} n\right\rceil$ and $\amax \coloneqq \left\lfloor\frac{3}{100} \left({d}/{d - 2}\right) \left({1}/{\pmin}\right) \log_{d - 1} n \right\rfloor$. Observe that this choice of $\amin$ and $\amax$ satisfies condition (i) of Lemma \ref{lem:te_full_prob_lw_bdd}. Furthermore, for all $\alpha, h, i \in \mathbb{N}$ such that $\alpha \in [\amin, \amax]$, $i \in \left[\frac{\pmin}{d} \alpha, \frac{\pmin}{d} \left(\alpha + \sqrt{\alpha}\right)\right]$ and $h \in \left[\left(\frac{d-2}{d}\right) \pmin \alpha, \left(\frac{d-2}{d}\right) \pmin \left(\alpha + \sqrt{\alpha}\right)\right]$, it follows that $h$ and $i$ satisfy ${R}/{10} \leq h \leq {R}/{5}$ and $i \leq {R}/{5}$. 
		
		Therefore there are $d - 2$ neighbours of $u$, say $u_1, \dots, u_{d-2}$, such that for all $1 \leq j \leq d - 2$, there are $\frac{\widetilde{\omega}_{h, i}}{2}$ walks from $u_j$ to a vertex in $\Vgood$ which are $r$-acyclic and $(h, i)$-constrained, and hence condition (ii) of Lemma \ref{lem:te_full_prob_lw_bdd} is satisfied for every $u_j$ with $S = \Vgood$ and $\zeta = 1/2$.
		
		Hence by Lemma \ref{lem:te_full_prob_lw_bdd}, with probability $1 - o_n(1)$ over the choice of $G$, noting that we invoke it at time $t = 1$ and that $T_1 = 1 + \Cburn + \frac{\amax + \amin}{2}$, there exists a constant $C \in (0, 1)$ such that for all $1 \leq j \leq d - 2$ we have that 
		\begin{equation} \label{eq:pob2}
			\mathbb{P} \left[X_{T_1} \in \Vgood \mid X_1 = u_j \right] \geq {C}/{2}.
		\end{equation}
		
		It remains to show that, with probability $\Omega(1)$, the walker transitions during $(0, 1)$ from $u$ to one of the neighbours $u_1, \dots, u_{d - 2}$ and then stays there until time $t = 1$. Recall the definition of the transition event $\xi_{u \to u'} (t, t')$ as in Definition \ref{def:transition-event} and let $\sigma_1$ denote the first ring time of the walker clock. If $N_{\mathrm{w}}(0, 1) = 1$ and $\xi_{u \to u_j} (0, \sigma_1)$ both hold for some $1 \leq j \leq d - 2$, the walker clock rings exactly once in $(0, 1)$, the edge $\{u, u_j\}$ refreshes open during $(0, \sigma_1)$, and at time $\sigma_1$ the walker examines that edge. Hence the walker transitions from $u$ to $u_j$; since there are no further rings in $(\sigma_1, 1)$, it remains there until time $t = 1$.
		
		Conditioned on $N_{\mathrm{w}}(0, 1) = 1$, the time $\sigma_1$ is uniformly distributed on $(0, 1)$.  Therefore, for all $1 \leq j \leq d - 2$ and for all $\eta \in \{0, 1\}^E$,
		\begin{equation} \label{eq:pob3}
			\mathbb{P} \left[X_1 = u_j \mid \left(\eta_0, X_0\right) = (\eta, u) \right] 
			\geq \int_0^1 \mathbb{P}\left[\xi_{u \to u_j} (0, x) \mid \left(\eta_0, X_0\right) = (\eta, u)\right] \mathbb{P}\left[N_{\mathrm{w}}(0, 1) = 1\right] \mathrm{d} x. 
		\end{equation}
		
		By Lemma \ref{lem:transition_lw_bdd} and the fact that $\mathbb{P}\left[N_{\mathrm{w}}(0, 1) = 1\right] = {1}/{e}$,
		\begin{equation} \label{eq:pob4}
			\left(\int_0^1 \mathbb{P}\left[\xi_{u \to u_j} (0, x) \mid \left(\eta_0, X_0\right) = (\eta, u)\right] \mathrm{d} x\right) \mathbb{P}\left[N_{\mathrm{w}}(0, 1) = 1\right] = \frac{\pmin}{e d} \left(1 - \frac{1 - e^{-\mu}}{\mu}\right). 
		\end{equation}
			
		Since $\mu = \Omega(\log n)$, $\left(1 - \frac{1 - e^{-\mu}}{\mu}\right) \geq {1}/{2}$ for $n$ sufficiently large. 	Moreover, for each fixed $x > 0$, the events $\xi_{u \to u_1}(0, x), \dots, \xi_{u \to u_{d - 2}}(0, x)$ are pairwise disjoint. Therefore for all $n$ sufficiently large, by (\ref{eq:pob3}) and (\ref{eq:pob4}),  we have that $\mathbb{P} \left[X_1 \in \{u_1, \dots, u_{d - 2}\} \mid \left(\eta_0, X_0\right) = (\eta, u) \right] \geq \left(\frac{d - 2}{d}\right) \frac{\pmin}{2 e}$. The result follows.
	\end{proof}
	
	\phaseTwoBound*
	
	\begin{proof}
		By Lemma \ref{lem:sparse_good_set_size}, with probability $1 - o_n(1)$ over $G \sim \rrg$, for any $v \in \Vgood$ there exists $S_v \subseteq V$ such that $\left|S_v\right| = n - o(n)$ and for all $h, i \in \mathbb{N}$ such that $\Hmin \leq h \leq \Hmax$ and $i \leq \left({4}/{\pmin}\right) \log_{d - 1} n$, the following holds: For every $x \in S_v$ there are at least $\frac{1}{4n} \widetilde{\omega}_{h, i}$ walks from $v$ to $x$ which are $(h, i)$-constrained and $r$-acyclic.
		
		Fix such a graph $G$, vertex $v \in \Vgood$, and let $\amin \coloneqq \left\lceil\frac{d}{d-2} \frac{1}{\pmin} \left(\log_{d-1} n + 2\log_{d-1}\log n \right)\right\rceil$ and $\amax \coloneqq \left\lfloor\frac{d}{d-2} \frac{1}{\pmin}(1+\frac{1}{40})\log_{d-1} n\right\rfloor$. Observe that this choice of $\amin$ and $\amax$ satisfies condition (i) of Lemma \ref{lem:te_full_prob_lw_bdd}. Furthermore, for all $\alpha, i, h \in \mathbb{N}$ such that $\alpha \in [\amin, \amax]$, $i \in \left[\frac{\pmin}{d} \alpha, \frac{\pmin}{d} \left(\alpha + \sqrt{\alpha}\right)\right]$ and $h \in \left[\left(\frac{d-2}{d}\right) \pmin \alpha, \left(\frac{d-2}{d}\right) \pmin \left(\alpha + \sqrt{\alpha}\right)\right]$, it follows that $h$ and $i$ satisfy $\Hmin \leq h \leq \Hmax$ and $i \leq \left({4}/{\pmin}\right) \log_{d - 1} n$. 
		
		Therefore there exists $S_v \subseteq V$ such that $|S_v| = n - o(n)$ and for all $x \in S_v$, there are $\frac{1}{4n}\widetilde{\omega}_{h, i}$ walks from $v$ to $x$ which are $r$-acyclic and $(h, i)$-constrained, and hence condition (ii) of Lemma \ref{lem:te_full_prob_lw_bdd} is satisfied with $S = \{x\}$ and $\zeta = {1}/{4n}$.
		
		Hence by Lemma \ref{lem:te_full_prob_lw_bdd}, with probability $1 - o_n(1)$ over the choice of $G$, noting that we invoke it at time $t = T_1$ and that $T_2 - T_1 = \Cburn + \frac{\amax + \amin}{2}$, there exists $C \in (0, 1)$ such that for all $x \in S_v$ and $\hat\eta \in \{0, 1\}^E$, we have that $\mathbb{P}\left[X_{T_2} = x \mid \left(\eta_{T_1}, X_{T_1}\right) = \left(\hat\eta, v\right)\right] \geq {C}/{4 n}$. The result follows.
	\end{proof}
	
	\bibliographystyle{plain}
	\bibliography{drwrc_ref}
    
    \appendix
    
\section{Detailed Chain Description}
    We next formalise the full Markov chain that we study.  For all positive integers~$d$ and~$n$,   $\mathcal{G}(d, n)$ denotes the family of (simple) $d$-regular graphs with~$n$ vertices. Given a graph $G = (V, E)$, 
an indicator vector $\eta \in \{0, 1\}^E$ and an edge $e \in E$, $\eta+e$ denotes
the indicator vector obtained from~$\eta$ by changing the entry of~$e$ to~$1$ (or leaving it alone if it is already~$1$). Similarly, $\eta-e$ is the configuration obtained from~$\eta$ by changing the entry of~$e$ to~$0$. 
Fix $G = (V, E) \sim \rrg$. We consider a continuous-time Markov chain $\left(\eta_t, X_t\right)$ with state space $\{0, 1\}^E \times V$. 
The indicator vector
$\eta_t\in \{0,1\}^E$ is the \emph{environment} at time~$t$.
It represents the configuration of edges in~$E$ which are open at time~$t$. 
The vertex~$X_t$ is the \emph{walker position} at time~$t$.

The dynamics are driven by $|E| + 1$ independent Poisson clocks -- one clock $\Cw$ with rate~$1$ which updates the walker $X_t$, and
for each edge $e\in E$,
a clock $\Ce$ with rate~$\mu$ which updates
the entry of $\eta_t$ corresponding to~$e$.
Let $\pmin \coloneqq \min\left\{p, \frac{p}{q(1-p) + p}\right\}$ and $\pmax \coloneqq \max\left\{p, \frac{p}{q(1-p) + p}\right\}$. If the clock~$\Ce$ rings at time $t$ then $\eta_{t-}$ denotes the environment just before
$\Ce$~rings.
The dynamics samples a uniform random variable $U_e (t)$ on $(0, 1)$ to determine whether  edge~$e$ is opened or closed in~$\eta_t$, depending on whether or not $e$~is a cut edge  in $\eta_{t-}$, according to the procedure {\sc Environment\_Update}$(\eta_{t-},e)$.
	\begin{algorithmic}
		\Function{environment\_update}{$\eta_{t-}, e$}
		\State $U_e (t) \sim \text{Uniform}(0, 1)$
		\If{$e$ is a cut edge in $\eta_{t-}$}
		\If{$U_e (t) \leq \pmin$} \Return $\eta_{t-} + e$
		\Else \ \Return $\eta_{t-} - e$
		\EndIf
		\Else
		\If{$U_e (t) \leq \pmax$} \Return $\eta_{t-} + e$
		\Else \ \Return $\eta_{t-} - e$
		\EndIf
		\EndIf
		\EndFunction
	\end{algorithmic}

Let $X_{t-}$ denote the location of the walker just before~$\Cw$ rings.
Whenever the walker clock~$\Cw$ rings, say at time $t$, the chain samples $V_{X_{t-}} (t)$ uniformly on the $d$ neighbours of $X_{t-}$ in $G$.\footnote{We will in fact assume that, whenever the walker clock~$\Cw$ rings (say at time $t$), for all $u\in V$ we sample  $V_u(t)$ to be a uniformly chosen neighbour of~$u$ in~$G$; the chain only uses $V_{X_{t-}}(t)$ but the extra samples will be technically convenient later.} If the edge $\{X_{t-}, V_{X_{t-}} (t)\}$ is open in $\eta_{t-}$, then $X_t = V_{X_{t-}} (t)$. Otherwise, the walker stays at the same location, i.e., $X_t = X_{t-}$.
The full Markov chain is thus described by 
the procedure {\sc full\_chain}$(\eta_0,X_0)$.
	\begin{algorithmic}
		\Function{full\_chain}{$\eta_0, X_0$}
		\If{$\Cw$ rings at time $t$}
		\State $\forall u \in V \colon V_u (t) \sim \text{Uniform}\left(v \in V \colon \{u, v\} \in E\right)$
		\If{$X_{t-}$ is adjacent to $V_{X_{t-}} (t)$ in $\eta_{t-}$} $X_t \leftarrow V_{X_{t-}} (t)$
		\EndIf
		\ElsIf{$\Ce$ rings at time $t$ for some $e \in E$} $\eta_t \leftarrow$ \Call{environment\_update}{$\eta_{t-}, e$}
		\EndIf
		\EndFunction
	\end{algorithmic}
	
Observe that the environment dynamics $(\eta_t)$ is a Markov chain whose stationary distribution is the random cluster measure $\pi_{G,p,q}$, but the walker position $(X_t)$ is \textit{not} Markovian since its transitions depend on the current configuration. Nevertheless, the full system $(\eta_t, X_t)$ is a Markov chain. 
Let $\pi^{\textrm{SRW}}_G$ denote the stationary distribution of the simple random walk on $G$. Then
the stationary distribution of the full system is
$\pi_{G, p, q} \times \pi^{\textrm{SRW}}_G$. Given a regular graph $G$, $\pi^{\textrm{SRW}}_G$ is simply the uniform distribution on the vertex set $V$, which we  denote by $\pi_V$. We use
$\tmix^{(\mu, p, q)} (G)$ to denote the mixing time of the full chain.

	\section{Sparse boundaries in burned-in environments} \label{sec:sparsity}
	
	Given a graph $G = (V, E)$, a configuration $\eta \in \{0, 1\}^E$ and a set of edges $H \subseteq E$, recall that by $\eta^H$ we denote the configuration $\eta^H (e) = 1$ if $e \in H$, else $\eta^H (e) = \eta(e)$ if $e \in E \setminus H$. 
	
	\begin{definition}[External $K$-sparsity events] \label{def:external-K-sparse-event}
		Given $v \in V$, let $E_v \coloneqq E\left(B_R (v)\right)$ and consider the discrete-time random cluster Glauber dynamics $\left(\eta_{\langle t \rangle}^{E_v}\right)_{t \in \mathbb{N}}$ with parameters $(p, q)$, that is, the dynamics where all edges in $E_v$ are wired open. 
		
		For all $F \subseteq \partial B_R (v)$, we define $S^F_{\langle t \rangle} \left(B_R(v)\right)$ as the event that, in the graph $\left(V,\eta_{\langle t \rangle}^{E_v}  \setminus E_v\right)$, the vertices in $\partial B_R (v)$ which are in non-trivial components are exactly $F$. 
		
		We define $S_{\langle t \rangle} \left(B_R(v), K\right)$ as the event that $S^F_{\langle t \rangle} \left(B_R(v)\right)$ holds for some $F \subseteq \partial B_R (u)$ such that $|F| \leq K$. Furthermore, we define $S_{\langle t \rangle} (R, K) \coloneqq \cap_{v \in V} S_{\langle t \rangle} (B_R(v), K)$. For $t'' > t' > 0$, by $S_{\langle[t', t'']\rangle} (R, K)$ we denote the event that $S_{\langle t \rangle} (R, K)$ holds for all $t \in [t', t'']$.
		
		By  $S^F_t \left(B_R (v)\right)$, $S_t \left(B_R (v), K\right)$, $S_t (R, K)$ and $S_{[t', t'']} (R, K)$ we will denote the continuous-time analogues of each event, respectively.
	\end{definition}
	
	The following result, implicit from the proof of Proposition 2 and Theorem 5 in \cite{Blanca2021}, asserts that after a sufficiently long ``burn-in" for the discrete-time random cluster Glauber dynamics, for any fixed $\delta, c_1, c_2 > 0$ there exists a constant $K > 0$ such that with probability $1 - O(n^{-c_1})$ over the choice of graph, for a given ball of radius $R$ around a vertex, with probability $1 - O(n^{-c_2})$ the number of vertices in non-trivial boundary components of this ball is at most $K$.
	
	\begin{restatable}[\cite{Blanca2021}, Theorem 5]{restatabletheorem}{sparseBurnIn} \label{thm:k_sparse_burn_in}
		Let $c_1, c_2 > 0$. There exist constants $D_{\mathrm{burn}}, K > 0$ such that the following holds: For all $n \in \mathbb{N}$ sufficiently large and for all $t \in \mathbb{N}$ such that $t \geq D_\mathrm{burn} n \log n$, with probability $1 - O\left(n^{-c_1}\right)$ over $G \sim \rrg$, for all $\eta' \in \{0, 1\}^E$, \[\mathbb{P} \left[\neg S_{\langle t \rangle} (R, K) \mid \eta_{\langle 0 \rangle} = \eta'\right] = O\left(n^{-c_2}\right).\]
	\end{restatable}
	
	\begin{proof}
		Here we briefly remark why we can state the results of \cite{Blanca2021} in this slightly different form. For a boundary condition $\eta$ on $\partial B_R(v)$, by $\mathcal{J}_{B_R (v), \eta}$ we will denote the set of vertices in $B_R (v)$ which belong to non-singleton components of the partition $\eta\left[\partial B_R (v)\right]$.
		
		First, their proof relies on the discrete-time joint-revealing procedure on the ball $B_R(v)$ and its boundary. In particular, their procedure does not reveal any information on the random variables associated with the edges inside $B_R (v)$ up to time $t$. 
		
		Instead, they consider the interior with all edges wired open, hence maximising connectivity through the ball, and only consider the random variables associated with the exterior edges for the dynamics. This gives a configuration $\tilde{\eta}_{\langle t \rangle}$ which dominates the true configuration $\eta_{\langle t \rangle}$, and hence sparsity on $\tilde{\eta}_{\langle t \rangle}$ via the exterior implies sparsity on $\eta_{\langle t \rangle}$ via the exterior. 
		
		Furthermore, in their proof they show that, jointly over the configuration model and $\tilde{\eta}_{\langle t \rangle}$, for any $k > 1$, the probability of $\left\{\big|\mathcal{J}_{B_R (v), \eta_{\langle t \rangle}}\big| < k\right\}$ is dominated by the probability of the event $S_{\langle t \rangle} \left(B_R (v), k\right)$, which in turn is bounded above by $n^{-\delta k \wedge 4}$.
		
		The constant $4$ that appears in the exponent can be carefully controlled. We desire it to be $c_1 + c_2 + 2$; let $M \geq \left\lceil\frac{c_1 + c_2 + 2}{\delta}\right\rceil$. We can then set $\Lambda$ sufficiently large in the proof of their Proposition 2 (dependent on $M$, among other parameters), such that the contribution from the term $\mathbb{P}\left[\sum_{k \leq k_\emptyset} |\mathcal{V}_k| \geq \Lambda \left|\mathcal{V}_0\right|\right]$ is bounded above by $O(n^{-M})$ by their Lemma 6. The proof of Proposition 2 then proceeds as usual, with the final Markov inequality over the configuration model applied with $n^{-(c_2 + 1)}$ instead of $n^{-2}$.
		
		This then gives that, with probability $1 - O\left(n^{-(c_1 + 1)}\right)$ over $G$ sampled from the configuration model, $\sup\limits_{v \in V} \mathbb{P} \left[\neg S_{\langle t \rangle}(B_R(v), K) \mid \eta_{\langle 0 \rangle} = \eta'\right] \leq n^{-(c_2 + 1)}$. From this, one can deduce the analogue of their Theorem 5 by union bounding over the vertices and conditioning on simple graphs in the configuration model.
	\end{proof}
	
	The following result gives a continuous-time analogue of Theorem \ref{thm:k_sparse_burn_in}. 
	
	\begin{lemma}\label{lem:sparse_sup_prob}
		There are constants $\Cburn, K > 0$ such that the following holds: With probability $1 - O(1/n)$ over $G \sim \rrg$, for all $\mu \geq \varepsilon \log n $, for all $t' > t \geq \Cburn$ such that $t' - t \leq (4/\pmin) \log_{d-1} n$ and for all $\eta \in \{0, 1\}^E$,  
		\[\mathbb{P} [\neg S_{[t' - \Delta, t']} (R, K) \mid \eta_0 = \eta] = O
		\Big(\frac{1}{n}\Big(\frac{\pmin}{4 d}\Big)^{(4/{\pmin}) \log_{d - 1} n}\Big)\]
		where $\Delta \coloneqq (t' - t) \left(\frac{\varepsilon \log n}{\mu}\right)$.
	\end{lemma}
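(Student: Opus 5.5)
We derive Lemma~\ref{lem:sparse_sup_prob} from Theorem~\ref{thm:k_sparse_burn_in} in three steps: fix the exponents, convert the discrete-time statement to continuous time, and pass from fixed times to a whole interval by a union bound over a fine time grid.

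\textbf{Choosing the exponents.} Set $\beta \coloneqq (4/\pmin)\log_{d-1}(4d/\pmin)$. Then $\bigl(\pmin/(4d)\bigr)^{(4/\pmin)\log_{d-1} n} = n^{-\beta}$, so the target bound is $O(n^{-1-\beta})$. Apply Theorem~\ref{thm:k_sparse_burn_in} with $c_1 = 1$ and $c_2 = \beta + C'$, where $C'$ is a constant (say $C'=6$) large enough to absorb the polynomial losses below. This yields constants $D_{\mathrm{burn}}, K$ and a graph event $\mathcal{E}$ of probability $1-O(n^{-1})$. There is one subtlety. The theorem is stated for each fixed discrete time $t$ separately, each with its own failure event over $G$, whereas we need every relevant time simultaneously. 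I would handle this by reading the proof: the graph-side bad event does not depend on $t$ once $t \ge D_{\mathrm{burn}} n\log n$, since it is the configuration-model Markov inequality bounding $\sup_v$ of the annealed probability. Failing that, a union bound over polynomially many grid times costs only a polynomial factor, which is absorbed by raising $c_1$.

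\textbf{Discrete to continuous time.} Uniformize the edge dynamics: the continuous-time environment with all $|E| = dn/2$ clocks of rate $\mu$ is the discrete chain $\eta_{\langle N(s)\rangle}$, where $N(s)$ is a Poisson process of rate $\mu dn/2$ and each step updates a uniformly random edge. This is the standard Glauber dynamics, and the wired-interior coupling of Definition~\ref{def:external-K-sparse-event} is preserved. Take $\Cburn \coloneqq 4D_{\mathrm{burn}}/(\varepsilon d)$. For $s \ge \Cburn$ and $\mu \ge \varepsilon\log n$ we have $\mathbb{E}N(s) \ge 2D_{\mathrm{burn}} n\log n$, so $N(s) < D_{\mathrm{burn}} n\log n$ with probability $e^{-\Omega(n\log n)}$. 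Conditioning on $N(s) = m$ and applying Theorem~\ref{thm:k_sparse_burn_in} at each admissible $m$ gives $\mathbb{P}[\neg S_s(R,K)\mid \eta_0=\eta] = O(n^{-c_2})$, uniformly in $\eta$ and $s \ge \Cburn$.

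\textbf{From points to the interval $[t'-\Delta,t']$.} This is the main obstacle. $S_s(R,K)$ can only change when some edge clock rings, so it suffices to control the states visited at the jump times in the window. The window has length $\Delta = (t'-t)\varepsilon\log n/\mu$, so the expected number of jumps in it is $\mu\Delta\, dn/2 = (t'-t)\varepsilon (\log n)\, dn/2 = O(n\log^2 n)$. I expect it to be at most $n^2$ except with probability $e^{-\Omega(n)}$. Since the discrete bound holds for every discrete time $m \ge D_{\mathrm{burn}} n\log n$, I would union bound over the discrete indices $m$ in the (high-probability) range $[N(t'-\Delta), N(t'-\Delta)+n^2]$. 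These indices lie in a deterministic window of size $O(n^2)$ up to exponentially small error, because $N(t'-\Delta)$ concentrates within $O(n\log^{2} n)$ of its mean. The union bound costs a factor $O(n^2)$, giving $O(n^{2-c_2}) + e^{-\Omega(n)} = O(n^{-1-\beta})$ with the choice $C'=6$. The care needed is to do this union bound on the discrete-step chain, so that the deterministic indices are independent of the Poisson jump times, and to make the graph event uniform over all $m$ in the relevant polynomial range. The latter again follows from the $t$-independence noted in the first step, or from taking $c_1 = 4$ and union bounding over the $O(n^{2}\cdot\mathrm{polylog})$ relevant $m$. Finally, all bounds are uniform in $\mu \ge \varepsilon\log n$, in $t$, $t'$ and in $\eta$, as required.
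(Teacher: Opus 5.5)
Your exponent bookkeeping is right (your $\beta$ is the paper's $\rho$), and union bounding over the jump indices of the discrete chain is the right mechanism. However, the step you flag yourself as a ``subtlety'' is a genuine gap, and neither of your two resolutions closes it.

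The lemma needs one event of probability $1-O(1/n)$ over $G\sim\rrg$ on which the bound holds simultaneously for \emph{all} $\mu\ge\varepsilon\log n$ and \emph{all} $t'>t\ge\Cburn$. Theorem~\ref{thm:k_sparse_burn_in}, by contrast, supplies a different good-graph event for each discrete time. Since you run the chain from time $0$, the discrete indices you need are those near $N(t'-\Delta)$. Its mean $\mu(t'-\Delta)dn/2$ is unbounded over the allowed parameters, so there are not ``polynomially many'' relevant indices, and no choice of $c_1$ rescues the union bound.

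Your claim that the bad graph event is $t$-independent cannot simply be read off from Blanca--Gheissari. There, Markov's inequality over the configuration model is applied separately for each $t$, to the $t$-dependent quantity $\sup_v\mathbb{P}[\neg S_{\langle t\rangle}(B_R(v),K)\mid\eta_{\langle 0\rangle}=\eta']$. Your concentration claim also fails uniformly. $N(t'-\Delta)$ has standard deviation of order $\sqrt{\mu t' n}$, which exceeds $n\log^2 n$ once $\mu t'\gg n\log^4 n$. So even the location of your ``deterministic window of size $O(n^2)$'' is not controlled uniformly in $(\mu,t')$.

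The missing idea, which is how the paper argues, is to restart with the Markov property and use that Theorem~\ref{thm:k_sparse_burn_in} is uniform over the initial configuration. Put $t_0\coloneqq t'-\Delta-\Cburn\,\varepsilon\log n/\mu$. Since $\varepsilon\log n/\mu\le 1$, we have $t_0\ge t-\Cburn\ge 0$, and it suffices to bound $\mathbb{P}[\neg S_{[t'-\Delta,t']}(R,K)\mid\eta_{t_0}=\eta']$ uniformly in $\eta'$.

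Both $[t_0,t'-\Delta)$ and $[t'-\Delta,t']$ have lengths proportional to $\varepsilon\log n/\mu$. Hence the numbers of edge updates in them are Poisson with means $(\Cburn d\varepsilon/2)\,n\log n$ and $(d\varepsilon/2)(t'-t)\,n\log n=O(n\log^2 n)$, independent of $\mu$, $t$ and $t'$. Choose $\Cburn$ so that, except with probability $O(n^{-(\rho+1)})$, the first count is at least $T_{\mathrm{burn}}=\lceil D_{\mathrm{burn}}n\log n\rceil$ and both counts are $O(n\log^2 n)$. Then only discrete times in a \emph{fixed} range $[T_{\mathrm{burn}},T_{\mathrm{max}}]$ with $T_{\mathrm{max}}=\Theta(n\log^2 n)$ ever matter.

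Union bounding the graph events of Theorem~\ref{thm:k_sparse_burn_in} with $c_1=3$ over these $\Theta(n\log^2 n)$ times yields a single good event of probability $1-O(1/n)$. On that event, a union bound over the same times with $c_2=\rho+3$ gives $O(n^{-(\rho+1)})$, uniformly in all parameters. This restart is also the correct source of the $t$-independence you wanted: it comes from the Markov property plus uniformity in $\eta'$, not from the configuration-model argument.
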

	
	\begin{proof}
		Recall that we will use $S_t (R, K)$ and $S_{\langle t \rangle} (R, K)$ to differentiate between the continuous and discrete-time analogues, respectively, of the external sparsity event in Definition \ref{def:external-K-sparse-event}. 
		
		Let $\rho \coloneqq \left(\frac{4}{\pmin}\right) \frac{\log \left({4d}/{\pmin}\right)}{\log (d - 1)}$. By Theorem \ref{thm:k_sparse_burn_in}, there exist constants $D_\mathrm{burn}, K > 0$ such that for all $T \in \mathbb{N}$ satisfying $T \geq D_\mathrm{burn} n \log n$: With probability $1 - O\left(n^{-3}\right)$ over $G \sim \rrg$,
		\begin{equation} \label{eq:ssp1}
			\mathbb{P} \left[\neg S_{\langle T \rangle} (R, K) \mid \eta_{\langle 0 \rangle} = \eta\right] = O\left(n^{-(\rho + 3)}\right), \qquad \forall \eta \in \{0, 1\}^E.
		\end{equation}
		
		Let $C > 0$ be a constant. Let the edge clocks refresh at rate $\mu \geq\varepsilon \log n $. Let $t' > t \geq C$ such that $t' - t \leq (4/\pmin) \log_{d-1} n$. Define $\Delta \coloneqq (t' - t) \left(\frac{\varepsilon \log n}{\mu}\right)$ and $t_0 \coloneqq t' - \Delta - C \left(\frac{\varepsilon \log n}{\mu}\right)$; observe that $t' - \Delta = t_0 + C \left(\frac{\varepsilon \log n}{\mu}\right)$ and that for all $\mu \geq \varepsilon \log n$ we have that $t_0 = t' - (t' - t + C) \left(\frac{\varepsilon \log n}{\mu}\right) \geq t' - (t' - t + C) \geq t - C \geq 0$ since $t \geq C$.

        By the Markov property, we have that, for all $\eta \in \{0, 1\}^E$,
        \begin{equation}
            \mathbb{P} [\neg S_{[t' - \Delta, t']} (R, K) \mid \eta_0 = \eta] = \sum_{\eta' \in \{0, 1\}^E} \mathbb{P} [\neg S_{[t' - \Delta, t']} (R, K) \mid \eta_{t_0} = \eta'] \mathbb{P}\left[\eta_{t_0} = \eta' \mid \eta_0 = \eta\right]
        \end{equation}
        and therefore it suffices to show that, for all $\eta' \in \{0, 1\}^E$, $\mathbb{P} \left[\neg S_{[t' - \Delta, t']} \left(R, K\right) \mid \eta_{t_0} = \eta'\right] = O\left(n^{-(\rho + 1)}\right)$.
		
		During the interval $\left[t_0, t' - \Delta\right)$, which has length $C \left(\frac{\varepsilon \log n}{\mu}\right)$, the number of edge refreshes is $N_1 (C) \sim \mathsf{Poisson}\left(\left({C d \varepsilon}/{2}\right) n \log n\right)$. Let $T_\mathrm{burn} \coloneqq \left\lceil D_\mathrm{burn} n \log n\right\rceil$. Since $\mathbb{E}\left[N_1 (C)\right] = \left({C d \varepsilon}/{2}\right) n \log n$, then by concentration there exists $\Cburn > 0$ such that $\mathbb{P}\left[N_1 (\Cburn) < T_\mathrm{burn} \right] \leq O\left(n^{-(\rho + 1)}\right)$. Also, there exists $C_1 > 0$ such that $\mathbb{P}\left[N_1(\Cburn) > C_1 n \log n\right] \leq O\left(n^{-(\rho + 1)}\right)$. Define $T_{1, \mathrm{max}} \coloneqq \left\lfloor C_1 n \log n \right\rfloor$. 
		
		During the interval $\left[t' - \Delta, t'\right]$, the number of edge refreshes is $N_2 \sim \mathsf{Poisson}\left((d \varepsilon / 2) (t' - t) n \log n)\right)$. Since $t' - t \leq (4 / \pmin) \log_{d-1} n$, we have that $\mathbb{E}\left[N_2\right] \leq \left(\frac{2 d \varepsilon}{\pmin \log(d - 1)}\right) n (\log n)^2$, and by concentration there exists $C_2 > 0$ such that $\mathbb{P}\left[N_2 > C_2 n (\log n)^2 \right] \leq O\left(n^{-(\rho + 1)}\right)$. Let $T_{2, \mathrm{max}} \coloneqq \left\lfloor C_2 n (\log n)^2\right\rfloor$.

        Note that, since Poisson clocks are independent over disjoint time intervals, $N_1 (\Cburn)$ and $N_2$ are independent. Given $T_1, T_2 \in \mathbb{N}$ and $\eta' \in \{0, 1\}^E$, conditioned on $N_1 (\Cburn) = T_1$ and $N_2 = T_2$, the continuous-time dynamics on $[t_0, t']$, started at $\eta_{t_0} = \eta'$, is distributed as the discrete-time chain run for $T_1 + T_2$ steps from the initial state $\eta_{\langle 0 \rangle} = \eta'$, where at each step an edge is chosen independently and uniformly at random. Define $T_\mathrm{max} \coloneqq T_{1, \mathrm{max}} + T_{2, \mathrm{max}}$. By a simple union bound, for all $\eta' \in \{0, 1\}^E$,
		\begin{align}
			& \ \mathbb{P} \left[\neg S_{[t' - \Delta, t']} \left(R, K\right) \mid \eta_{t_0} = \eta'\right] \nonumber \\
			=& \sum_{T_1 = 0}^\infty \sum_{T_2 = 0}^\infty \mathbb{P} \left[\neg S_{\langle[T_1, T_1 + T_2]\rangle} \left(R, K\right) \mid \eta_{\langle 0 \rangle} = \eta'\right] \mathbb{P}\left[N_1 (\Cburn) = T_1\right] \mathbb{P}\left[N_2 = T_2\right] \nonumber \\
			\leq& \sum_{T_1 = 0}^\infty \sum_{T_2 = 0}^\infty \sum_{T = T_1}^{T_1 + T_2} \mathbb{P} \left[\neg S_{\langle T \rangle} \left(R, K\right) \mid \eta_{\langle 0 \rangle} = \eta'\right] \mathbb{P}\left[N_1 (\Cburn) = T_1\right] \mathbb{P}\left[N_2 = T_2\right] \nonumber \\
			\leq& \ \mathbb{P}\left[N_1 (\Cburn) < T_\mathrm{burn}\right] + \mathbb{P}\left[N_1 (\Cburn) > T_{1, \mathrm{max}}\right] + \mathbb{P}\left[N_2 > T_{2, \mathrm{max}}\right] \nonumber \\
			& \ + \sum_{T_1 = T_\mathrm{burn}}^{T_{1, \mathrm{max}}} \sum_{T_2 = 0}^{T_{2, \mathrm{max}}} \sum_{T = T_1}^{T_1 + T_2} \mathbb{P} \left[\neg S_{\langle T \rangle} \left(R, K\right) \mid \eta_{\langle 0 \rangle} = \eta'\right] \mathbb{P}\left[N_1 (\Cburn) = T_1\right] \mathbb{P}\left[N_2 = T_2\right] \nonumber \\
			\leq & \ \mathbb{P}\left[N_1 (\Cburn) \notin \left[T_\mathrm{burn}, T_{1, \mathrm{max}}\right]\right] + \mathbb{P}\left[N_2 > T_{2, \mathrm{max}}\right] + \sum_{T = T_\mathrm{burn}}^{T_\mathrm{max}} \mathbb{P} \left[\neg S_{\langle T \rangle} \left(R, K\right) \mid \eta_{\langle 0 \rangle} = \eta'\right].  \label{eq:ssp2}
		\end{align}
		
		For all $T \in \mathbb{N}$ such that $T_\mathrm{burn} \leq T \leq T_\mathrm{max}$, let $\mathcal{G}_{\langle T \rangle}$ be the family of graphs satisfying (\ref{eq:ssp1}). Since $T_\mathrm{max} = \Theta\left(n (\log n)^2\right)$, then by a simple union bound, with probability $1 - O\left(n^{-1}\right)$ over $G \sim \rrg$, $G$ is in $\cap_{T = T_\mathrm{burn}}^{T_\mathrm{max}} \mathcal{G}_{\langle T \rangle}$, since for our choice of $K$, the probability that $G \notin \mathcal{G}_{\langle T \rangle}$ is $O\left(n^{-3}\right)$. Now for every such graph $G$ we have that for all $T \in \mathbb{N}$ such that $T_\mathrm{burn} \leq T \leq T_\mathrm{max}$ and for all $\eta' \in \{0, 1\}^E$, $\mathbb{P} \left[\neg S_{\langle T \rangle} (R, K) \mid \eta_{\langle 0 \rangle} = \eta'\right] = O\left(n^{-(\rho + 3)}\right)$. Furthermore, by our choices of $\Cburn$, $T_{1, \mathrm{max}}$ and $T_{2, \mathrm{max}}$, we have that $\mathbb{P}\left[N_1 (\Cburn) \notin \left[T_\mathrm{burn}, T_{1, \mathrm{max}}\right]\right] = O\left(n^{-(\rho + 1)}\right)$ and $\mathbb{P}\left[N_2 > T_{2, \mathrm{max}}\right] = O\left(n^{-(\rho + 1)}\right)$.
		
		Therefore, from these facts in conjunction with (\ref{eq:ssp2}),
		\begin{equation*}
			\mathbb{P} \left[\neg S_{[t' - \Delta, t']} \left(R, K\right) \mid \eta_{t_0} = \eta'\right] = O\left(n^{-(\rho + 1)}\right) + \sum_{T = T_\mathrm{burn}}^{T_\mathrm{max}} O\left(n^{-(\rho + 3)}\right) = O\left(n^{-(\rho + 1)}\right)
		\end{equation*}
		noting that $T_\mathrm{max} = \Theta\left(n (\log n)^2\right)$. The result follows.
	\end{proof}

    \section{Lower bounding walker transition events: Proofs of Lemmas \ref{lem:transition_lw_bdd} and \ref{lem:stationary_lw_bdd}} \label{sec:transition_event_lw_bdds}
	
	We next turn our attention back to our full chain, with both the walker and environment dynamics. Consider a $d$-regular graph $G = (V, E)$ on $n$ vertices. Recall that for an edge $e \in E$ and $t' > t \geq 0$, by $\Ce (t, t')$ we denote the event that $\Ce$ rings in $(t, t')$, and, conditioned on the event $\Ce(t, t')$, let $\tau_e (t, t')$ denote the last ring time of $\Ce$ in $(t, t')$. Given $e = \{u, u'\} \in E$ and $t' > t \geq 0$, recall from Definition \ref{def:transition-event} that $\xi_{u \to u'} (t, t')$ is the event that all of the following hold: $V_u (t') = u'$, $\Ce(t, t')$ and $e$ is refreshed open at time $\tau_e (t, t')$. Also, from Definition \ref{def:stationary-transition-event}, recall that  $\xi_{u \not\to u'} (t, t')$ is the event that all of the following hold: $V_u (t') = u'$, $\Ce(t, t')$ and $e$ is refreshed closed at time $\tau_e (t, t')$.
    
    We will prove Lemmas \ref{lem:transition_lw_bdd} and \ref{lem:stationary_lw_bdd}, which establish lower bounds on the probability of the events $\xi_{u \to u'} (t, t')$ and $\xi_{u \not\to u'} (t, t')$, respectively. 
	
	Before proceeding further, we note the following useful fact about Poisson clocks.
	
	\begin{lemma} \label{poisson_pdf_last_arrival}
		Let $\mathsf{C}$ be a Poisson clock of rate $\mu > 0$. Let $T > 0$ and let $\tau$ be the last time $\mathsf{C}$ rings on $(0, T)$, conditioned on the event $\mathsf{C}(0, T)$. Then $\tau$ has cumulative distribution function
		\begin{eqnarray*}
			\mathcal{F}_{\tau \vert \mathsf{C}(0, T)} (x) = \mathbb{P}\left[\tau \leq x \mid \mathsf{C}(0, T)\right] = \frac{e^{\mu x} - 1}{e^{\mu T} - 1},& &0 < x < T
		\end{eqnarray*}
		and probability density function $f_{\tau \vert \mathsf{C}(0, T)} (x) = \frac{\mu e^{\mu x}}{e^{\mu T} - 1}$, $0 < x < T$.
	\end{lemma}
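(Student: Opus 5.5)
The natural route is the standard argument for the last arrival time of a Poisson process on an interval. First I compute the distribution function directly. For $0<x<T$, the event $\{\tau\le x\}\cap\mathsf{C}(0,T)$ says exactly that $\mathsf{C}$ rings at least once in $(0,x]$ and not at all in $(x,T)$. By the independent-increments property these two pieces are independent, so
\[
\mathbb{P}[\tau\le x,\ \mathsf{C}(0,T)] = (1-e^{-\mu x})\,e^{-\mu(T-x)} = e^{-\mu T}(e^{\mu x}-1).
\]
Dividing by $\mathbb{P}[\mathsf{C}(0,T)] = 1-e^{-\mu T} = e^{-\mu T}(e^{\mu T}-1)$ gives $\mathcal{F}_{\tau\mid\mathsf{C}(0,T)}(x) = (e^{\mu x}-1)/(e^{\mu T}-1)$.

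As a sanity check, this function is continuous, increasing, tends to $0$ as $x\downarrow 0$ and to $1$ as $x\uparrow T$. So the conditional law of $\tau$ is supported on $(0,T)$ and has no atoms; the probability of a ring exactly at $T$ or at $0$ is zero, so open versus closed endpoints do not matter.

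Since the distribution function is differentiable on $(0,T)$, the density is its derivative, $f_{\tau\mid\mathsf{C}(0,T)}(x) = \mu e^{\mu x}/(e^{\mu T}-1)$. Integrating it over $(0,T)$ gives $1$, which confirms the normalisation.

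There is no real obstacle here. The only point needing care is justifying that the events on $(0,x]$ and $(x,T)$ are independent, which follows from the definition of a Poisson clock. An alternative route reaches the same formula: condition on $k\ge1$ rings, under which the ring times are i.i.d.\ uniform on $(0,T)$ so that $\mathbb{P}[\tau\le x]=(x/T)^k$, and then sum over $k$ with Poisson weights.
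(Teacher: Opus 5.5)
Your proposal is correct and follows essentially the same route as the paper: you factor $\mathbb{P}[\tau\le x,\ \mathsf{C}(0,T)]$ as $\mathbb{P}[\mathsf{C}(0,x]]\,\mathbb{P}[\neg\mathsf{C}(x,T)]$ using independent increments, divide by $\mathbb{P}[\mathsf{C}(0,T)]$, and differentiate to obtain the density. Your checks on the endpoints and normalisation, and the alternative computation via i.i.d.\ uniform ring times, are both correct.
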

	
	\begin{proof}
		First note that $\mathcal{F}_{\tau \vert \mathsf{C}(0, T)} (x) = \mathbb{P}\left[\tau \leq x \mid \mathsf{C}(0, T)\right] = \frac{\mathbb{P}\left[\mathsf{C}(0, x]\right]\mathbb{P}\left[\neg\mathsf{C}(x, T)\right]}{\mathbb{P}\left[\mathsf{C}(0, T)\right]} = \frac{e^{\mu x} - 1}{e^{\mu T} - 1}$. Taking the derivative with respect to $x$, we obtain that $f_{\tau \vert \mathsf{C}(0, T)} (x) = \dfrac{\mu e^{\mu x}}{e^{\mu T} - 1}$.
	\end{proof}
	
	\subsection{Proof of Lemma \ref{lem:transition_lw_bdd}}
	
	\transitionLwBdd*
	
	\begin{proof}
		Let $e \coloneqq \{u, u'\}$. If the edge $e$ refreshes during $(t, t')$, i.e. the event $\Ce (t, t')$ holds, and during the last ring $\tau_e (t, t')$ it holds that $U_e \left(\tau_e (t, t')\right) < \pmin$, then $e$ is refreshed open independent of whether it is a cut edge or not at time $\tau_e (t, t')$. Therefore, the events $V_u (t') = u'$, $\Ce (t, t')$ and $U_e \left(\tau_e (t, t')\right) < \pmin$ imply the event $\xi_{u \to u'} (t, t')$. Hence,
        \[\mathbb{P} \left[\xi_{u \to u'} \left(t,t'\right) \mid (\eta_t, X_t) = (\eta, v)\right] \geq \mathbb{P} \left[V_{u}\left(t'\right) = u', \Ce\left(t,t'\right), U_e \left(\tau_e(t, t')\right) < \pmin \mid (\eta_t, X_t) = (\eta, v)\right].\]

        The events $V_u (t') = u'$, $\Ce (t, t')$ and $U_e \left(\tau_e (t, t')\right) < \pmin$ concern the ringing of Poisson clocks and sampling of uniforms during the time interval $(t, t')$, which are independent of the state at time $t$. Furthermore, the random variable $V_u$ is independent of the environment random variables, namely $U_e$ and $\Ce$. 
        
        We therefore have that,
		\begin{align*}
			&\ \mathbb{P} \left[V_{u}\left(t'\right) = u', \Ce\left(t,t'\right), U_e \left(\tau_e(t, t')\right) < \pmin \mid (\eta_t, X_t) = (\eta, v)\right] \\
            =&\ \mathbb{P}\left[V_{u}\left(t'\right) = u'\right] \mathbb{P}\left[U_e \left(\tau_e(t, t')\right) < \pmin \mid \Ce\left(t,t'\right)\right] \mathbb{P}\left[\Ce \left(t,t'\right)\right].
		\end{align*}
		
		Since $V_{u}\left(t'\right)$ is uniform on the $d$ neighbours of $u$, then $\mathbb{P}\left[V_{u}\left(t'\right) = u'\right] = 1/d$. By the probability of a Poisson clock ringing in a given time interval, we have $\mathbb{P}\left[\Ce \left(t,t'\right)\right] = 1 - e^{-\mu \left(t' - t\right)}$. Lastly, since $U_e$ is uniform on $(0, 1)$, we have $\mathbb{P}\left[U_e \left(\tau_e(t, t')\right) < \pmin \mid \Ce\left(t,t'\right)\right] = \pmin$. The result follows.
	\end{proof}
	
	\subsection{Controlling cut edge event probabilities} \label{sec:st_events}

    In order to obtain a good lower bound on the probability of $\xi_{u \not\to u'}(t, t')$, we need to carefully control the probability that $e = \{u, u'\}$ is a cut edge in the configuration at its last refresh time $\tau_e (t, t')$, conditioned on the event $\Ce(t, t')$.

An edge $e = \{u, v\}$ fails to be a cut edge in a configuration if there exists an open path in the configuration between $u$ and $v$ 
that does not use the edge $e$. If, at the last refresh time of the edge $e$, 
before it is examined by the walker, there were no such open paths between $u$ and $v$, then $e$ was a cut edge at its last refresh time. It therefore suffices to control the probability of open paths between endpoints of edges incident to the walker vertex at the last time of refresh for the edge examined. We proceed to define a series of useful events which together guarantee that an edge is a cut edge in a given configuration. Given an edge $e' \in E$ and $t'' > t \geq 0$, let $\mathcal{E}_{e'} (t, t'')$ be the event that $\mathsf{C}_{e'} (t, t'')$ and $U_{e'}(\tau_{e'}(t, t'')) \geq \pmax = p$ both hold. We therefore have that, 
	\begin{equation} \label{eq:ste1}
		\mathbb{P}\left[\neg \mathcal{E}_{e'}(t, t'')\right] = 1 - (1-p)(1-e^{-\mu (t'' - t)}) = p + (1-p)e^{-\mu (t'' - t)}.
	\end{equation} 

    In particular, if $\mathcal{E}_{e'}(t, t'')$ holds then the edge $e'$ is refreshed during $(t, t'')$ and at its last refresh it is closed independently of whether it was a cut edge or not, and remains closed until time $t''$.
	
	\begin{remark} \label{rem:edge_refresh_events}
		For any two edges $e', e'' \in E$, the events $\mathcal{E}_{e'} (t, t'')$ and $\mathcal{E}_{e''} (t, t'')$ are independent, by the independence of the corresponding clocks and random variables. More generally, $\mathcal{E}_{e'} (t, t'')$ is independent of any other event concerning the clocks and uniforms of edges distinct from $e'$ during $(t, t'')$. Moreover, since $\mathcal{E}_{e'} (t, t'')$ concerns the ringing of Poisson clocks and sampling of uniforms during the time interval $(t, t'')$, which are independent of the state at time $t$, then $\mathcal{E}_{e'} (t, t'')$ is independent of the state at time $t$.
	\end{remark}
	
	Given a vertex $u$ and a cycle $\mathbf{c}$ in the underlying graph $G$, let $E_{u, r} (\mathbf{c})$ be the set of edges on $\mathbf{c}$ which are distance at most $r$ away from $u$ (i.e. for a given edge $e$ on $\mathbf{c}$, the shortest path from $u$ to each end-vertex of $e$ has length at most $r$). If a cycle $\mathbf{c}$ passing through $u$ has length greater than $r$, then $\left|E_{u, r} (\mathbf{c})\right| \geq r$. Also, given an edge $e \in E$, let $C_e (R)$ be the set of cycles in the underlying graph $G$ with length strictly less than $2R$ that contain $e$. Then given a vertex $u$ and edge $e$ incident to $u$, for all $t'' > t \geq 0$ define the event
	\begin{equation*}
		\xi_{u, e}^{\mathrm{acyclic}}\left(t, t''\right) \coloneqq \bigcap_{\mathbf{c} \in C_e (R)}  \bigcup_{\substack{e' \in E_{u, r} (\mathbf{c}) \\ e' \neq e}} \!\! \mathcal{E}_{e'} (t, t'').
	\end{equation*}
	which is the event that during the time interval $(t, t'')$, for every cycle in $C_e (R)$ at least one edge $e' \neq e$ on the cycle, distance at most $r$ away from $u$, was refreshed and the last time this edge was refreshed in $(t, t'')$ it was closed (independently of whether it was a cut edge or not). 
	
	\begin{remark} \label{rem:short_cycle_cut}
		In particular, if $\xi_{u, e}^{\mathrm{acyclic}}\left(t, t''\right)$ holds then for any cycle containing $e$ which is entirely contained in the ball $B_R(u)$ (i.e. has length less than $2R$), at least one edge, distinct from $e$, on such a cycle must be closed at time $t''$.
	\end{remark}
	
	We next prove the following lower bound for the probability of the event $\xi_{u, e}^{\mathrm{acyclic}}\left(t, t + x\right)$. 
	
	\begin{lemma} \label{lem:acyclic_prob}
		For every $d$-regular graph $G = (V, E)$ and for all $\mu > 0$, $t \geq 0$, $x > 0$ and $e = \{u, u'\} \in E$ such that $u$ does not lie on a cycle of length less than $r$ and $B_R(u)$ has at most one cycle, then $\mathbb{P} \left[\xi_{u, e}^{\mathrm{acyclic}}\left(t, t + x\right) \right] \geq 1 - \left(p + (1-p)e^{-\mu x}\right)^{r - 1}$.
	\end{lemma}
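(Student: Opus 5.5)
The plan is to exploit the hypothesis that $B_R(u)$ contains at most one cycle. This reduces the intersection over $\mathbf{c} \in C_e(R)$ in the definition of $\xi_{u, e}^{\mathrm{acyclic}}(t, t+x)$ to at most one genuine constraint.

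First we argue that $|C_e(R)| \le 1$. Any cycle $\mathbf{c}$ of length strictly less than $2R$ that contains the edge $e = \{u,u'\}$ passes through $u$. Every vertex of $\mathbf{c}$ is then within distance less than $R$ of $u$ along the cycle, so $\mathbf{c}$ lies entirely inside $B_R(u)$ as a subgraph. If there were two distinct such cycles, their union would contain two distinct cycles inside $B_R(u)$, contradicting the hypothesis. (Here one should check that "at most one cycle" means the induced subgraph $G[B_R(u)]$ has cyclomatic number at most one; in that case two distinct cycles cannot coexist.) If $C_e(R) = \emptyset$, the intersection is empty, the event is the whole space, and the bound is trivial.

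Second, suppose $C_e(R) = \{\mathbf{c}\}$. Since $u$ does not lie on a cycle of length less than $r$, the cycle $\mathbf{c}$, which passes through $u$, has length at least $r$. The edges of $\mathbf{c}$ within distance $r$ of $u$ then number at least $r$: walking in both directions around $\mathbf{c}$ from $u$ gives enough edges, up to checking parity and boundary cases, which is the fact the paper already states, $|E_{u,r}(\mathbf{c})| \ge r$. Removing $e$ leaves at least $r-1$ edges $e' \neq e$ in $E_{u,r}(\mathbf{c})$. The complement event $\neg\xi_{u,e}^{\mathrm{acyclic}}$ is therefore contained in $\bigcap_{e'} \neg\mathcal{E}_{e'}(t,t+x)$ over these $r-1$ edges.

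Third, by Remark~\ref{rem:edge_refresh_events}, the events $\mathcal{E}_{e'}(t,t+x)$ for distinct edges are independent, since they depend on disjoint clocks and uniforms. By (\ref{eq:ste1}) each complement has probability $p + (1-p)e^{-\mu x}$. Hence
\[
\mathbb{P}\bigl[\neg\xi_{u,e}^{\mathrm{acyclic}}(t,t+x)\bigr] \le \bigl(p + (1-p)e^{-\mu x}\bigr)^{r-1},
\]
which gives the claim.

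The main obstacle is the first step: rigorously justifying that every cycle in $C_e(R)$ lies inside the ball, and that uniqueness follows from the "at most one cycle" hypothesis. If the hypothesis instead means that the excess of $G[B_R(u)]$ is at most zero, then $C_e(R)$ has at most one element as required.
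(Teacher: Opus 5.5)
Your proof is correct and follows the paper's argument essentially step for step: the one-cycle hypothesis gives $|C_e(R)|\le 1$, the surviving cycle has length at least $r$ and so contributes at least $r-1$ edges $e'\neq e$ in $E_{u,r}(\mathbf{c})$, and independence of the events $\mathcal{E}_{e'}(t,t+x)$ together with (\ref{eq:ste1}) bounds the complement by $(p+(1-p)e^{-\mu x})^{r-1}$. You also check explicitly that any cycle of length less than $2R$ through $e$ lies inside $B_R(u)$, and you handle the trivial case $C_e(R)=\emptyset$; the paper leaves both of these implicit.
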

	
	\begin{proof}
		Since there is at most one cycle in $B_R (u)$, then $\left|C_e (R)\right| \leq 1$. Moreover, if such a cycle containing $e$ exists in $B_R (u)$, its length must be at least $r$ (otherwise $u$ lies on a cycle of length less than $r$) and hence $\left|E_{u, r}(\mathbf{c})\right| \geq r$. By complementation and a simple union bound (noting the independence of the $\mathcal{E}$ events for distinct edges in Remark \ref{rem:edge_refresh_events}), 
		\begin{equation*}
			\mathbb{P} \left[\neg \xi_{u, e}^{\mathrm{acyclic}}\left(t, t + x\right) \right] \leq \sum_{\mathbf{c} \in C_e(R)} \prod_{\substack{e' \in E_{u, r}(\mathbf{c}) \\ e' \neq e}} \mathbb{P} \left[\neg \mathcal{E}_{e'} (t, t + x)\right] \leq \left(p + (1-p)e^{-\mu x}\right)^{r - 1}
		\end{equation*}
		where the last inequality follows using $\left|C_{e} (R)\right| \leq 1$, the fact that any such cycle must have at least $r - 1$ edges distinct from $e$ that are a distance at most $r$ from $u$, and the upper bound on $\mathbb{P}\left[\neg \mathcal{E}_{e'}(t, t + x)\right]$ given in (\ref{eq:ste1}). The result follows.
	\end{proof}
	
	Given a vertex $u$ and a path $\mathbf{p}$, let $E_{u, (r, R)}(\mathbf{p})$ be the set of edges on $\mathbf{p}$ which are at distance at least $r$ but at most $R$ away from $u$ (i.e. for a given edge $e$ on $\mathbf{p}$, the shortest path from $u$ to each end-vertex of $e$ has length at least $r$ but at most $R$). If a path $\mathbf{p}$ has one end-vertex at $u$ and the other in $\partial B_R(u)$, then $\left|E_{u, (r, R)}(\mathbf{p})\right| \geq R - r$. Furthermore, given $F \subseteq \partial B_R (u)$, let $P\left(B_R (u), F\right)$ be the set of paths in the graph from $u$ to $F$ which are contained entirely in $B_R(u)$. Then, for any edge $e$ incident to $u$, any $t'' > t \geq 0$, and any $F \subseteq \partial B_R (u)$, define the event
	\begin{equation*}
		\mathcal{P}^F_{u, e} (t, t'') \coloneqq \bigcap_{\mathbf{p} \in P\left(B_R (u), F\right)} \ \bigcup_{\substack{e' \in E_{u, (r, R)}(\mathbf{p}) \\ e' \neq e}}  \mathcal{E}_{e'} (t, t'')
	\end{equation*}
	which is the event that during the interval $(t, t'')$, for every path in $P\left(B_R (u), F\right)$, at least one edge $e' \neq e$ on the path was refreshed and the last time this edge was refreshed in $(t, t'')$ it was closed (independently of whether it was a cut edge or not). 
	
	Next, recall the definition of the external sparsity event $S^F_t \left(B_R (u)\right)$ from Definition \ref{def:external-K-sparse-event}. Given $K > 0$ as in Lemma \ref{lem:sparse_sup_prob}, define the event
	\begin{equation*}
		\xi_{u, e}^{\mathrm{path}}\left(t, t''\right) \coloneqq \bigcup_{\substack{F \subseteq \partial B_R (u) \\ |F| \leq K}} \mathcal{P}^{F}_{u, e} (t, t'') \cap S^F_{t''} \left(B_R (u)\right).
	\end{equation*}
	
	\begin{remark} \label{rem:long_cycle_cut}
		In particular, if $\xi_{u, e}^{\mathrm{path}}\left(t, t''\right)$ holds then for any cycle in the underlying graph which contains $e$ and is not entirely contained in the ball $B_R(u)$ (i.e. has length greater than $2R$), there is at least one edge, distinct from $e$, which is closed in the configuration at time $t''$. Suppose that $F \subseteq \partial B_R (u)$ is the set of non-trivial boundary vertices on $B_R(u)$ at time $t''$. Every such cycle must pass through the boundary of the ball $B_R (u)$ through at least two distinct vertices. If the cycle does not pass through the boundary at a vertex in $F$, then at least one edge (distinct from $e$) outside the ball on the cycle must be closed in the configuration at time $t''$ (otherwise there is an open path outside of the ball between non-trivial boundary vertices). Otherwise, if the cycle passes through the boundary at a vertex in $F$, then the segment of the cycle from $u$ to this vertex which is contained in $B_R(u)$ must have had an edge (distinct from $e$) refreshed closed during $(t, t'')$, since $\mathcal{P}^{F}_{u, e} (t, t'')$ holds.
	\end{remark}
	
	We will next prove the following lemma.
	
	\begin{lemma} \label{lem:path_prob}
		Let $n \in \mathbb{N}$ be sufficiently large and $G = (V, E)$ a $d$-regular graph on $n$ vertices. For all $\mu > 0$, $t \geq 0$, $x > 0$, $e = \{u, u'\} \in E$ such that $B_R(u)$ has at most one cycle, and $(\eta, v) \in \{0, 1\}^E \times V$, then
		\begin{align*}
			& \mathbb{P}\left[\xi_{u, e}^{\mathrm{path}}\left(t, t + x\right) \mid \left(\eta_t, X_t\right) = \left(\eta, v\right)\right] \\
			\geq&\ \left(1 - 2K \left(p + (1-p)e^{-\mu x}\right)^{{R}/{2}}\right) \mathbb{P}\left[S_{t+x} \left(R, K\right) \mid \left(\eta_t, X_t\right) = \left(\eta, v\right)\right].
		\end{align*}
	\end{lemma}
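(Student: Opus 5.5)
The approach is to lower bound $\mathbb{P}[\xi^{\mathrm{path}}_{u,e}(t,t+x)\mid\cdot]$ by restricting to the event $S_{t+x}(R,K)$. On that event there is a (random) set $F\subseteq\partial B_R(u)$ with $|F|\le K$ such that $S^F_{t+x}(B_R(u))$ holds. It then suffices to show that $\mathcal{P}^F_{u,e}(t,t+x)$ holds with conditional probability at least $1-2K(p+(1-p)e^{-\mu x})^{R/2}$, uniformly in $F$.

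The crucial structural point is independence. $S^F_{t+x}(B_R(u))$ is defined through the dynamics $\eta^{E_u}$ in which all edges of $E_u=E(B_R(u))$ are wired open. The event is therefore measurable with respect to the initial state together with the clocks and uniforms of edges outside $E_u$; whether an outside edge is a cut edge in $\eta^{E_u}$ does not depend on the randomness of edges inside $E_u$. On the other hand, $\mathcal{P}^F_{u,e}(t,t+x)$ only involves the events $\mathcal{E}_{e'}(t,t+x)$ for $e'\in E(B_R(u))$. By Remark \ref{rem:edge_refresh_events}, these are independent of the state at time $t$ and of the clocks and uniforms of all other edges. Hence the two events are conditionally independent given $(\eta_t,X_t)$, and
\[
\mathbb{P}[\xi^{\mathrm{path}}_{u,e}\mid\cdot]\ \ge\ \sum_{|F|\le K}\mathbb{P}[\mathcal{P}^F_{u,e}(t,t+x)]\,\mathbb{P}[S^F_{t+x}(B_R(u))\mid\cdot]\ \ge\ \min_{|F|\le K}\mathbb{P}[\mathcal{P}^F_{u,e}]\cdot\mathbb{P}[S_{t+x}(R,K)\mid\cdot].
\]
Here the events $S^F$ are disjoint for distinct $F$, and $S_{t+x}(R,K)\subseteq S_{t+x}(B_R(u),K)$.

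It remains to bound $\mathbb{P}[\neg\mathcal{P}^F_{u,e}]$, and this is where the geometry enters. Since $B_R(u)$ has at most one cycle, for each $w\in F$ there are at most two paths from $u$ to $w$ inside $B_R(u)$, so it is natural to aim for the factor $2K$. This requires interpreting $P(B_R(u),F)$ as self-avoiding paths; in any case, a non-simple path contains a simple one, and an edge closed on the simple subpath also blocks the longer one, so only simple paths matter.

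Each such path has at least $R-r\ge R/2$ edges in the annulus at distance between $r$ and $R$, for $n$ large since $r=O(\log\log n)$. None of these edges equals $e$, because $e$ is incident to $u$ and $r\ge1$. The events $\mathcal{E}_{e'}$ are independent across distinct edges, so by \eqref{eq:ste1} the probability that a given path has none of them occurring is at most $(p+(1-p)e^{-\mu x})^{R/2}$. A union bound over at most $2K$ paths then gives the claim.

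The main obstacle is making the independence step rigorous: one must check that $S^F_{t+x}(B_R(u))$ really uses no randomness attached to edges of $E_u$, including via the cut-edge status of outside edges. The wiring of $E_u$ open is exactly what ensures this.

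A secondary care point is the path count. If the unique cycle of $B_R(u)$ is not on a geodesic, paths from $u$ to $w$ may still branch around it. In a unicyclic ball, however, simple $u$–$w$ paths number at most $2$, which gives the factor $2$.
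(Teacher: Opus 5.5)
Your proposal is correct and follows essentially the same route as the paper's proof. Both arguments decompose over the disjoint events $S^F_{t+x}(B_R(u))$ with $|F|\le K$. Both factor by independence: the refresh randomness of edges inside the ball during $(t,t+x)$ is independent of the wired exterior dynamics and of the time-$t$ state. Both bound $|P(B_R(u),F)|\le 2K$ using the at-most-one-cycle property, and both finish with a union bound over paths, each having at least $R/2$ edges in the annulus. Your remarks that $e\notin E_{u,(r,R)}(\mathbf{p})$ (so the paper's exponent $R-r-1$ can be $R-r$) and that $P(B_R(u),F)$ must be read as simple paths are minor refinements, not a different argument.
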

	
	\begin{proof}
		First note that, by disjointness of events over $F \subseteq \partial B_R (u)$, we have that
		\begin{equation*} 
			\mathbb{P}\left[\xi_{u, e}^{\mathrm{path}}\left(t, t + x\right) \mid \left(\eta_t, X_t\right) = \left(\eta, v\right)\right] = \sum_{\substack{F \subseteq \partial B_R (u) \\ |F| \leq K}} \mathbb{P}\left[\mathcal{P}^{F}_{u, e} (t, t + x) \cap S^F_{t + x} \left(B_R (u)\right) \mid \left(\eta_t, X_t\right) = \left(\eta, v\right)\right].
		\end{equation*}
		
		Now, by Remark \ref{rem:edge_refresh_events}, the event $\mathcal{P}^{F}_{u, e} (t, t + x)$ is independent of the state at time $t$. Furthermore, recall that the external sparsity event $S^F_{t + x} \left(B_R (u)\right)$ is independent of the edge clocks and uniforms associated with edges inside $B_R(u)$ up to time $t + x$. On the other hand, $\mathcal{P}^{F}_{u, e} (t, t + x)$ is an event concerning the clocks and uniforms associated with edges inside $B_R(u)$ during $(t, t + x)$. Hence $S^F_{t + x} \left(B_R (u)\right)$ and $\mathcal{P}^{F}_{u, e} (t, t + x)$ are also independent. Consequently, we have that
		\begin{equation*} 
			\mathbb{P}\left[\xi_{u, e}^{\mathrm{path}}\left(t, t + x\right) \mid \left(\eta_t, X_t\right) = (\eta', v)\right] = \sum_{\substack{F \subseteq \partial B_R (u) \\ |F| \leq K}} \!\! \mathbb{P}\left[\mathcal{P}^{F}_{u, e} (t, t + x)\right] \mathbb{P}\left[S^F_{t + x} \left(B_R (u)\right) \mid \left(\eta_t, X_t\right) = \left(\eta, v\right)\right].
		\end{equation*}

        Since there is at most one cycle in $B_R(u)$, then given $F \subseteq \partial B_R (u)$ such that $|F| \leq K$, the number of paths inside $B_R (u)$ from $u$ to a vertex in $F$ is at most $2K$ and hence $\left|P\left(B_R(u), F\right)\right| \leq 2K$. Indeed, suppose that there is some vertex $v$ in $F$ such that there are three or more paths in $B_R (u)$ from $u$ to $v$; then there are at least two cycles in $B_R(u)$, which is a contradiction.
        
		Next, by complementation and a simple union bound (noting the independence of the $\mathcal{E}$ events for distinct edges in Remark \ref{rem:edge_refresh_events}), we have that
		\begin{equation*}
			\mathbb{P} \left[\neg \mathcal{P}^{F}_{u, e} (t, t + x) \right] \leq \sum_{\mathbf{p} \in P\left(B_R(u), F\right)} \prod_{\substack{e' \in E_{e, (r, R)}(\mathbf{p}) \\ e' \neq e}} \mathbb{P} \left[\neg \mathcal{E}_{e'} (t, t + x)\right] \text{d} x \leq 2K \left(p + (1-p)e^{-\mu x}\right)^{R - r - 1}
		\end{equation*}
		where the last inequality follows from $\left|P\left(B_R(u), F\right)\right| \leq 2K$, the fact that for any $\mathbf{p} \in P\left(B_R(u), F\right)$ we have $\left|E_{e, (r, R)} (\mathbf{p})\right| \geq R - r$, and the upper bound on $\mathbb{P}\left[\neg \mathcal{E}_{e'}(t, t + x)\right]$ given in (\ref{eq:ste1}).
		
		For $n$ sufficiently large we have $R - r - 1 \geq {R}/{2}$, since $R = \Theta(\log n)$ and $r = \Theta(\log \log n)$. Therefore, for $n$ sufficiently large, $\mathbb{P} \left[\neg \mathcal{P}^{F}_{u, e} (t, t + x) \right] \leq 2K \left(p + (1-p)e^{-\mu x}\right)^{{R}/{2}}$. From Definition \ref{def:external-K-sparse-event}, we have that $S_{t+x} \left(B_R(u), K\right)$ is the disjoint union of the events $S^F_{t+x} \left(B_R (u)\right)$ for all $F \subseteq \partial B_R (u)$ such that $|F| \leq K$. Combining everything together, we have that
		\begin{align*} 
			&\ \mathbb{P}\left[\xi_{u, e}^{\mathrm{path}}\left(t, t + x\right) \mid \left(\eta_t, X_t\right) = (\eta', v)\right] \\
			\geq& \left(1 - 2K \left(p + (1-p)e^{-\mu x}\right)^{{R}/{2}}\right) \sum_{\substack{F \subseteq \partial B_R (u) \\ |F| \leq K}} \mathbb{P}\left[S^F_{t + x} \left(B_R (u)\right) \mid \left(\eta_t, X_t\right) = \left(\eta, v\right)\right] \\
            =& \left(1 - 2K \left(p + (1-p)e^{-\mu x}\right)^{{R}/{2}}\right) \mathbb{P}\left[S_{t+x} \left(B_R (u), K\right) \mid \left(\eta_t, X_t\right) = \left(\eta, v\right)\right] \\
			\geq& \left(1 - 2K \left(p + (1-p)e^{-\mu x}\right)^{{R}/{2}}\right) \mathbb{P}\left[S_{t+x} \left(R, K\right) \mid \left(\eta_t, X_t\right) = \left(\eta, v\right)\right]
		\end{align*}
        where the last inequality follows from the fact that $S_{t+x} \left(R, K\right) \subseteq S_{t+x} \left(B_R (u), K\right)$. The result follows.
	\end{proof}
	
	If the events $\xi_{u, e}^{\mathrm{acyclic}}\left(t, t''\right)$ and $\xi_{u, e}^{\mathrm{path}}\left(t, t''\right)$ both hold, then by Remarks \ref{rem:short_cycle_cut} and \ref{rem:long_cycle_cut} all cycles passing through the edge $e$ in the underlying graph have at least one edge, distinct from $e$, closed at time $t''$. Hence $e$ is a cut edge in the configuration at that time. We therefore define the following so-called ``cut edge event".
	
	\begin{definition}[$\xi_{u, e}^{\mathrm{cut}} (t, t'')$] \label{def:cut edge-event}
		Let $G = (V, E)$ be a $d$-regular graph. Let $t'' > t \geq 0$ and $u, u' \in V$ such that $e = \{u, u'\} \in E$. The event $\xi_{u, e}^{\mathrm{cut}} (t, t'')$ is the event that $\xi_{u, e}^{\mathrm{acyclic}}\left(t, t''\right)$ and $\xi_{u, e}^{\mathrm{path}}\left(t, t''\right)$ both hold.
	\end{definition}
	
	Before lower bounding the probability of our cut edge events, we will need the following lemma. 
	
	\begin{lemma} \label{integral_bound}
		For all $n \in \mathbb{N}$ sufficiently large, $\mu > 0$ and $\Delta > 0$ such that $\mu \Delta > 2 \log 2$,
		\begin{equation*}
			\int_{0}^{\Delta} \frac{\mu e^{\mu x}}{e^{\mu \Delta} - 1} \left(p + (1-p)e^{-\mu x}\right)^{r - 1} \text{d} x \leq e^{-{\mu \Delta}/{2}} + O\left((\log n)^{-3}\right).
		\end{equation*}
	\end{lemma}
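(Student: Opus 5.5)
The plan is to split the integral at $x = \Delta/2$. On $[0, \Delta/2]$ I will use the trivial bound on the bracket, which gives the $e^{-\mu\Delta/2}$ term. On $[\Delta/2, \Delta]$ the bracket is uniformly small, and the choice of $r$ in Definition~\ref{def:r-acyclic-walk} is exactly what makes it $O((\log n)^{-3})$. Throughout, note that $f(x) \coloneqq \frac{\mu e^{\mu x}}{e^{\mu\Delta}-1}$ is the density from Lemma~\ref{poisson_pdf_last_arrival}, so it is nonnegative and integrates to $1$ over $(0,\Delta)$.

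\emph{First piece.} For $0 \le x \le \Delta/2$ we have $0 \le p + (1-p)e^{-\mu x} \le 1$, so the integrand is at most $f(x)$. By the CDF formula of Lemma~\ref{poisson_pdf_last_arrival},
\[
\int_0^{\Delta/2} f(x)\,\mathrm{d}x = \frac{e^{\mu\Delta/2}-1}{e^{\mu\Delta}-1} = \frac{1}{e^{\mu\Delta/2}+1} \le e^{-\mu\Delta/2}.
\]

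\emph{Second piece.} For $x \ge \Delta/2$, the hypothesis $\mu\Delta > 2\log 2$ gives $e^{-\mu x} \le e^{-\mu\Delta/2} < 1/2$. Hence the base satisfies $p + (1-p)e^{-\mu x} \le \frac{1+p}{2}$. Since $f$ integrates to at most $1$ on this range, this piece is at most $\left(\frac{1+p}{2}\right)^{r-1}$.

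\emph{Choice of $r$.} Because $p < \pu$, we have $\frac{1+p}{2} < \frac{1+\pu}{2} < 1$. Therefore
\[
\left(\tfrac{1+p}{2}\right)^{r-1} \le \tfrac{2}{1+p}\left(\tfrac{1+\pu}{2}\right)^{r}.
\]
By the definition $r = \frac{3\log_{d-1}\log n}{\log_{d-1}(2/(1+\pu))}$, we get $\left(\frac{1+\pu}{2}\right)^{r} = (d-1)^{-3\log_{d-1}\log n} = (\log n)^{-3}$. The prefactor $\frac{2}{1+p}$ is at most $2$, so the second piece is $O((\log n)^{-3})$. Adding the two pieces gives the claimed bound.

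The only step needing care is this last one: the bound hinges on $p < \pu$ and on $r$ being calibrated so that $((1+\pu)/2)^r = (\log n)^{-3}$ exactly. It is not a real obstacle, since the calculation is straightforward.
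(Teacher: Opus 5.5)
Your proposal is correct and takes the same approach as the paper's proof. Both split the integral at $x=\Delta/2$. On $[0,\Delta/2]$ both use the trivial bound to get $1/(e^{\mu\Delta/2}+1)\le e^{-\mu\Delta/2}$. On $[\Delta/2,\Delta]$ both bound the base by $(1+p)/2\le(1+\pu)/2$ and then use the calibration of $r$. Your explicit identity $((1+\pu)/2)^{r}=(\log n)^{-3}$ makes the paper's unnamed constant $C$ concrete (at most $2$); it relies on $r\ge 1$, which holds for $n$ large.
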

	
	\begin{proof}
		Since $e^{-y}$ is strictly decreasing in $y$ and for all $y \geq 0$,  $e^{-y} \leq 1$, we have that, in particular, $p + (1-p)e^{-\mu x} \leq 1$ for $x \in [0, {\Delta}/{2}]$, and $p + (1-p)e^{-\mu x} \leq p + (1-p)e^{-{\mu \Delta}/{2}}$ for $x \in [{\Delta}/{2}, \Delta]$. 
		
		Also, since $\mu \Delta > 2 \log 2$, then $p + (1-p) e^{-{\mu \Delta}/{2}} < {(1+p)}/{2} \leq {(1+\pu)}/{2}$, which is also strictly less than $1$. In particular, by our choice of $r$, namely $r \coloneqq \dfrac{3 \log_{d-1} \log n}{\log_{d-1} (2 /  (1 + \pu))}$, there exists some $C > 0$ such that for all $n$ sufficiently large, $\left({(1+\pu)}/{2}\right)^{r - 1} < C(\log n)^{-3}$. 
        
        Therefore, splitting the integral over these two regions of $x$, we have that, for all $n$ sufficiently large,
		\begin{align*}
			\int_{0}^{\Delta} \frac{\mu e^{\mu x}}{e^{\mu \Delta} - 1} \left(p + (1-p)e^{-\mu x}\right)^{r - 1} \text{d} x &\leq \frac{1}{e^{{\mu \Delta}/{2}} + 1} + \int_{{\Delta}/{2}}^{\Delta} \frac{\mu e^{\mu x}}{e^{\mu \Delta} - 1} \left(\frac{1+\pu}{2}\right)^{r - 1} \text{d} x \nonumber \\ 
			&\leq e^{-{\mu \Delta}/{2}} + C(\log n)^{-3}
		\end{align*}
		and hence the result follows.
	\end{proof}
	
	We are now in a position to prove the following result.
	
	\begin{lemma} \label{lem:cut_edge_prob}
		Let $n \in \mathbb{N}$ be sufficiently large and $G = (V, E)$ a $d$-regular graph on $n$ vertices. For all $\mu > 0$, $t \geq 0$, $\Delta > 0$ such that $\mu \Delta > 2 \log 2$, $e = \{u, u'\} \in E$ such that $u$ does not lie on a cycle of length less than $r$ and $B_R(u)$ has at most one cycle, and $(\eta, v) \in \{0, 1\}^E \times V$,
		\begin{align*}
			&\ \mathbb{P}\left[\xi_{u, e}^{\mathrm{cut}} (t, \tau_e(t, t + \Delta))\mid \Ce\left(t, t + \Delta\right), (\eta_t, X_t) = (\eta, v)\right] \\
			\geq&\ \mathbb{P}\left[S_{[t, t + \Delta]} \left(R, K\right) \mid \left(\eta_t, X_t\right) = \left(\eta, v\right)\right] \left(1 - (2K + 1) e^{-{\mu \Delta}/{2}} - O\left((\log n)^{-3}\right)\right).
		\end{align*}
	\end{lemma}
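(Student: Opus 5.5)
The plan is to condition on the last ring time $\tau = \tau_e(t,t+\Delta)$, whose conditional density on $(t,t+\Delta)$ is $\frac{\mu e^{\mu (x-t)}}{e^{\mu\Delta}-1}$ by Lemma~\ref{poisson_pdf_last_arrival}. We then lower bound the probability of $\xi_{u,e}^{\mathrm{cut}}(t,t+x)$ for each fixed $x$. Observe first that conditioning on $\Ce(t,t+\Delta)$ and on $\tau = t+x$ only constrains the clock of $e$ itself. The events $\xi^{\mathrm{acyclic}}_{u,e}(t,t+x)$ and $\mathcal{P}^F_{u,e}(t,t+x)$ involve only clocks and uniforms of edges $e'\neq e$. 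The sparsity events $S^F_{t+x}(B_R(u))$ depend only on edges outside $B_R(u)$, and $e\in E(B_R(u))$. Hence the conditioning on the clock of $e$ can be dropped, by Remark~\ref{rem:edge_refresh_events} and the independence argument used in the proof of Lemma~\ref{lem:path_prob}. Consequently
\[
\mathbb{P}\bigl[\xi^{\mathrm{cut}}_{u,e}(t,\tau)\mid \Ce(t,t+\Delta),(\eta_t,X_t)=(\eta,v)\bigr]=\int_0^{\Delta}\frac{\mu e^{\mu x}}{e^{\mu\Delta}-1}\,\mathbb{P}\bigl[\xi^{\mathrm{cut}}_{u,e}(t,t+x)\mid(\eta_t,X_t)=(\eta,v)\bigr]\,\mathrm{d}x .
\]

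For fixed $x$, apply a union bound to the intersection. We have $\mathbb{P}[\xi^{\mathrm{cut}}]\ge \mathbb{P}[\xi^{\mathrm{path}}]-\mathbb{P}[\neg\xi^{\mathrm{acyclic}}]$. Lemma~\ref{lem:path_prob} gives $\mathbb{P}[\xi^{\mathrm{path}}(t,t+x)\mid\cdot]\ge (1-2K(p+(1-p)e^{-\mu x})^{R/2})\,\mathbb{P}[S_{t+x}(R,K)\mid\cdot]$. Since $S_{[t,t+\Delta]}(R,K)\subseteq S_{t+x}(R,K)$, we may replace the last factor by $\sigma:=\mathbb{P}[S_{[t,t+\Delta]}(R,K)\mid\cdot]$. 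Lemma~\ref{lem:acyclic_prob} gives $\mathbb{P}[\neg\xi^{\mathrm{acyclic}}(t,t+x)]\le (p+(1-p)e^{-\mu x})^{r-1}$. To obtain the product form of the statement, we multiply the acyclic deficit by $\sigma\le 1$. Equivalently, we bound $\mathbb{P}[\xi^{\mathrm{cut}}]\ge\sigma\bigl(1-2K(\cdot)^{R/2}\bigr)-(\cdot)^{r-1}\ge\sigma\bigl(1-2K(\cdot)^{R/2}-(\cdot)^{r-1}\bigr)-(1-\sigma)(\cdot)^{r-1}$. This needs care. The clean route is to note that $\xi^{\mathrm{acyclic}}$ is independent of the sparsity event, by the same inside/outside-ball independence. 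Then $\mathbb{P}[\xi^{\mathrm{path}}\cap\xi^{\mathrm{acyclic}}]\ge \sum_F \mathbb{P}[S^F]\,\mathbb{P}[\mathcal{P}^F\cap\xi^{\mathrm{acyclic}}]\ge \sigma\,(1-2K(\cdot)^{R/2}-(\cdot)^{r-1})$, with the inner term controlled by a union bound on edge-only events. I would take this route, redoing the computation of Lemma~\ref{lem:path_prob} with $\mathcal{P}^F\cap\xi^{\mathrm{acyclic}}$ in place of $\mathcal{P}^F$.

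It remains to integrate against the density, which has total mass $1$. For the acyclic term, Lemma~\ref{integral_bound} gives directly $\int(\cdot)^{r-1}\le e^{-\mu\Delta/2}+O((\log n)^{-3})$. For the path term, the same splitting of $[0,\Delta]$ at $\Delta/2$ works. On $[0,\Delta/2]$ the density has mass $\le e^{-\mu\Delta/2}$, contributing at most $2Ke^{-\mu\Delta/2}$. On $[\Delta/2,\Delta]$ the base is at most $(1+\pu)/2<1$, so $(\cdot)^{R/2}$ is $n^{-\Omega(1)}=O((\log n)^{-3})$, since $R=\Theta(\log n)$. Summing gives the factor $1-(2K+1)e^{-\mu\Delta/2}-O((\log n)^{-3})$, multiplied by $\sigma$.

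The main obstacle is the independence justification. We must drop the conditioning on the last ring of $e$, and we must factor the sparsity probability out of the intersection with $\xi^{\mathrm{acyclic}}$. Both rely on the external sparsity event being measurable with respect to the randomness of edges outside $B_R(u)$ only (with the interior wired open), together with the fact that all cycle/path-closing events use edges inside the ball, distinct from $e$. I would state this measurability explicitly, citing Definition~\ref{def:external-K-sparse-event}, before carrying out the computation.
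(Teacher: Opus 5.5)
Your proposal is correct and is essentially the paper's proof. It conditions on the last ring time via Lemma~\ref{poisson_pdf_last_arrival}, drops the conditioning on $\Ce$ because $\xi^{\mathrm{cut}}_{u,e}$ only involves other edges, bounds the integrand with Lemmas~\ref{lem:acyclic_prob} and~\ref{lem:path_prob} together with $S_{[t,t+\Delta]}(R,K)\subseteq S_{t+x}(R,K)$, and integrates using Lemma~\ref{integral_bound}. The only differences are minor: the paper factors the integrand as $\mathbb{P}[\xi^{\mathrm{acyclic}}_{u,e}]\,\mathbb{P}[\xi^{\mathrm{path}}_{u,e}]$ by asserting the two events are independent, then uses $R/2\ge r-1$ so one application of Lemma~\ref{integral_bound} covers both error terms, whereas your union bound inside the sum over $F$ needs only independence from the external sparsity event (slightly more robust, since the edge sets of the two events can share edges whose endpoints are both at distance exactly $r$), and your separate split at $\Delta/2$ for the $R/2$-power term is equally valid.
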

	
	\begin{proof}
		First note that, for $n$ sufficiently large, we have that $R > r > 1$ and hence for all $0 \leq x \leq \Delta$, the event $\xi_{u, e}^{\mathrm{cut}} (t, t + x)$ is independent of the clock $\Ce$ and uniform $U_e$ associated with the edge $e$. Indeed, $\xi_{u, e}^{\mathrm{cut}} (t, t + x)$ is the event that $\xi_{u, e}^{\mathrm{acyclic}}\left(t, t + x\right)$ and $\xi_{u, e}^{\mathrm{path}}\left(t, t + x\right)$ both hold, where $\xi_{u, e}^{\mathrm{acyclic}}\left(t, t + x\right)$ is an event about clocks and uniforms during $(t, t + x)$ for edges in $E\left(B_r (u)\right)$ distinct from $e$ while $\xi_{u, e}^{\mathrm{path}}\left(t, t + x\right)$ is an event about the clocks and uniforms up to time $t + x$ for edges in $E\left(B_R (u)\right) \setminus E\left(B_r (u)\right)$. 
		
		Hence for all $0 \leq x \leq \Delta$, we have that the events $\xi_{u, e}^{\mathrm{cut}} (t, t + x)$ and $\Ce(t, t + \Delta)$ are independent, the events $\xi_{u, e}^{\mathrm{acyclic}}\left(t, t + x\right)$ and $\xi_{u, e}^{\mathrm{path}}\left(t, t + x\right)$ are independent, and lastly the event $\xi_{u, e}^{\mathrm{acyclic}}\left(t, t + x\right)$ is independent of the state at time $t$. Therefore, in conjunction with Lemma \ref{poisson_pdf_last_arrival}, 
		\begin{align}
			&\ \mathbb{P} \left[\xi_{u, e}^{\mathrm{cut}}  \left(t, \tau_e(t, t + \Delta)\right) \mid \mathsf{C}_{e}\left(t, t + \Delta\right), (\eta_t, X_t) = (\eta, v)\right] \nonumber \\ 
			=& \int_{0}^{\Delta} \!\! \frac{\mu e^{\mu x}}{e^{\mu \Delta} - 1} \ \mathbb{P}\left[\xi_{u, e}^{\mathrm{cut}} (t, t + x) \mid (\eta_t, X_t) = (\eta, v) \right] \text{d} x \nonumber \\
			=& \int_{0}^{\Delta} \!\! \frac{\mu e^{\mu x}}{e^{\mu \Delta} - 1} \ \mathbb{P}\left[\xi_{u, e}^{\mathrm{acyclic}} (t, t+x) \right] \mathbb{P}\left[\xi_{u, e}^{\mathrm{path}}(t, t+x) \mid (\eta_t, X_t) = (\eta, v) \right] \text{d} x. \label{clp:eq1}
		\end{align}
		
		Recall from Definition \ref{def:external-K-sparse-event} that $S_{[t, t + \Delta]} (R, K)$ is the event that $S_{t+x} (R, K)$ holds for all $0 \leq x \leq \Delta$. Then, $\mathbb{P}\left[S_{t+x} \left(R, K\right) \mid \left(\eta_t, X_t\right) = \left(\eta, v\right)\right] \geq \mathbb{P}\left[S_{[t, t + \Delta]} \left(R, K\right) \mid \left(\eta_t, X_t\right) = \left(\eta, v\right)\right]$. 
        
        Together with Lemmas \ref{lem:acyclic_prob} and \ref{lem:path_prob}, we have that
		\begin{align}
			&\ \mathbb{P}\left[\xi_{u, e}^{\mathrm{acyclic}} (t, t+x) \right] \mathbb{P}\left[\xi_{u, e}^{\mathrm{path}}(t, t+x) \mid (\eta_t, X_t) = (\eta, v) \right] \nonumber \\ 
			\geq&\ \mathbb{P}\left[S_{t+x} \left(R, K\right) \mid \left(\eta_t, X_t\right) = \left(\eta, v\right)\right] \left(1 - 2K \left(p + (1-p)e^{-\mu x}\right)^{{R}/{2}}\right) \left(1 - \left(p + (1-p)e^{-\mu x}\right)^{r-1} \right). \nonumber \\
			>&\ \mathbb{P}\left[S_{[t, t + \Delta]} \left(R, K\right) \mid \left(\eta_t, X_t\right) = \left(\eta, v\right)\right] \left(1 - (2K + 1) \left(p + (1-p)e^{-\mu x}\right)^{r - 1}\right)\label{clp:eq2}
		\end{align}
		where the last inequality follows from the facts that $(1 - z)(1 - y) > 1 - z - y$ for $z, y > 0$, and $R/2 \geq r - 1$ for sufficiently large $n$.
		
		Hence, by (\ref{clp:eq1}) and (\ref{clp:eq2}), we obtain that
		\begin{align*}
			&\ \mathbb{P} \left[\xi_{u, e}^{\mathrm{cut}}  \left(t, \tau_e\right) \mid \mathsf{C}_{e}\left(t, t + \Delta\right), (\eta_t, X_t) = (\eta, v)\right] \\ 
			\geq& \ \mathbb{P}\left[S_{[t, t + \Delta]} \left(R, K\right) \mid \left(\eta_t, X_t\right) = \left(\eta, v\right)\right] \left(1 - (2K + 1) \int_{0}^{\Delta} \!\! \frac{\mu e^{\mu x}}{e^{\mu \Delta} - 1} \ \left(p + (1-p)e^{-\mu x}\right)^{r - 1} \ \mathrm{d} x \right).
		\end{align*}
		
		The result follows by Lemma \ref{integral_bound}.
	\end{proof}
	
	\subsection{Proof of Lemma \ref{lem:stationary_lw_bdd}}
	
	We are finally in a position to lower bound the probability of the event $\xi_{u \not\to u'} (t, t')$.

    \stationaryLwBdd*
	
	\begin{proof}
        Let $e \coloneqq \{u, u'\}$. If the edge $e$ refreshes during $(t, t')$, i.e. the event $\Ce(t, t')$ holds, and during the last ring $\tau_e (t, t')$ both $U_e (\tau_e (t, t')) > \pmin$ and $\xi_{u, e}^{\mathrm{cut}} \left(t, \tau_e (t, t')\right)$ hold, then $e$ is a cut edge in the configuration at time $\tau_e (t, t')$, then $e$ is refreshed closed at time $\tau_e (t, t')$ and remains closed until time $t'$.

        Therefore, the events $V_u (t') = u'$, $\Ce(t, t')$, $U_e (\tau_e (t, t')) > \pmin$ and $\xi_{u, e}^{\mathrm{cut}} \left(t, \tau_e (t, t')\right)$ imply the event $\xi_{u \not\to u'} (t, t')$. Hence,
        \begin{align*}
            &\ \mathbb{P}\left[\xi_{u \not\to u'} (t, t') \mid (\eta_t, X_t) = (\eta, v)\right] \\
            \geq&\ \mathbb{P} \left[V_u (t') = u', \Ce(t, t'), U_e (\tau_e (t, t')) > \pmin, \xi_{u, e}^{\mathrm{cut}} \left(t, \tau_e (t, t')\right) \mid (\eta_t, X_t) = (\eta, v)\right]
        \end{align*}

        The events $V_u (t') = u'$, $\Ce (t, t')$ and $U_e \left(\tau_e (t, t')\right) > \pmin$ concern the ringing of Poisson clocks and sampling of uniforms during $(t, t')$, which are independent of the state at time $t$. Furthermore, the random variable $V_u$ is independent of all environment random variables, namely $U_e$ and $\Ce$, and in particular is independent of the event $\xi_{u, e}^{\mathrm{cut}} \left(t, \tau_e (t, t')\right)$ which only concerns environment random variables. Also, the event $U_e \left(\tau_e (t, t')\right) > \pmin$ is independent of $\xi_{u, e}^{\mathrm{cut}} \left(t, \tau_e (t, t')\right)$, since $\xi_{u, e}^{\mathrm{cut}} \left(t, \tau_e (t, t')\right)$ by construction is an event about the ringing of clocks and sampling of uniforms associated with edges distinct from $e$ up to the last ring time $\tau_e (t, t')$ of $e$ in the time interval $(t, t')$. We therefore have that,
		\begin{align*}
            &\ \mathbb{P} \left[V_u (t') = u', \Ce(t, t'), U_e (\tau_e (t, t')) > \pmin, \xi_{u, e}^{\mathrm{cut}} \left(t, \tau_e (t, t')\right) \mid (\eta_t, X_t) = (\eta, v)\right] \\
            =&\ \mathbb{P}\left[V_{u}\left(t'\right) = u'\right] \mathbb{P}\left[U_e \left(\tau_e (t, t')\right) > \pmin \mid \Ce\left(t,t'\right)\right]\mathbb{P}\left[\Ce \left(t,t'\right)\right] \\
			&\ \mathbb{P}\left[\xi_{u, e}^{\mathrm{cut}} \left(t, \tau_e (t, t')\right) \mid \Ce \left(t, t'\right), (\eta_t, X_t) = (\eta, v)\right].
		\end{align*}
		
		Since $V_{u}\left(t'\right)$ is uniform on the $d$ neighbours of $u$, then $\mathbb{P}\left[V_{u}\left(t'\right) = u'\right] = 1/d$. By the probability of a Poisson clock ringing in a time interval, we have $\mathbb{P}\left[\Ce \left(t,t'\right)\right] = 1 - e^{-\mu \left(t' - t\right)}$. Also, since $U_e$ is uniform on $(0, 1)$, $\mathbb{P}\left[U_e \left(\tau_e(t, t')\right) > \pmin \mid \Ce\left(t,t'\right)\right] = 1 - \pmin$. Lastly, by Lemma \ref{lem:cut_edge_prob}, noting that all necessary conditions are met, we have that for $n$ sufficiently large,
		\begin{align*}
			&\ \mathbb{P}\left[\xi_{u, e}^{\mathrm{cut}} (t, \tau_e)\mid \Ce\left(t, t'\right), (\eta_t, X_t) = (\eta, v)\right] \\
			\geq&\ \mathbb{P}\left[S_{[t, t']} \left(R, K\right) \mid \left(\eta_t, X_t\right) = \left(\eta, v\right)\right] \left(1 - \frac{1}{(\log n)^2} - (2K + 1) e^{-\mu (t' - t)/2}\right).
		\end{align*}
		
		The result follows by combining all these lower bounds together, noting that \[(1 - e^{-a})(1 - c - (2b + 1) e^{-a/2}) \geq 1 - c - 2(b + 1)e^{-a/2}\] for all $a, b > 0$ and $0 < c < 1$.
	\end{proof}

    \section{Global properties of random regular graphs: Lemmas \ref{lem:phase1_walk_counts} and \ref{lem:sparse_good_set_size}}
    \label{sec:phase1_walk_counts}\label{sec:sparse_good_set_size}
	
	In this section we study several properties of random regular graphs, in order to carefully control the number of walks (under some constraints) which avoid small cycles in the graph, namely those of order $\rr$. In particular, we will prove Lemmas \ref{lem:phase1_walk_counts} and \ref{lem:sparse_good_set_size}.

    \subsection{Count of \texorpdfstring{$(h, i)$}{(h, i)}-constrained walks in infinite \texorpdfstring{$d$}{d}-regular and \texorpdfstring{$(d-1)$}{(d-1)}-ary trees}
	
    By $\mathcal{T}$ we denote the infinite $d$-regular tree with a designated vertex $\rho$ as the root. Also, by $\widetilde{\mathcal{T}}$ we denote the infinite $(d-1)$-ary tree with a designated vertex $\widetilde{\rho}$ as the root.
	
	We will study the counts of $(h, i)$-constrained walks as in Definition \ref{def:h-i-constrained-walk}. We first note the following observation.
	
	\begin{remark} \label{walk_count_root_vertex_equiv}
		By symmetry in the infinite $d$-regular tree $\mathcal{T}$, given any vertex $x \in V(\mathcal{T})$ the number of walks from $x$ of length $h+2i$ which end at a distance $h$ away from $x$ is equal to the number of such walks from the root $\rho$, namely $\omega_{h,i}$.
	\end{remark}
	
	We have the following lemma on the number of $(h, i)$-constrained walks on the $d$-regular and $(d-1)$-ary infinite trees.
	
	\begin{lemma} \label{lem:walk_counts}
		For all $h, i \in \mathbb{N}$, $\omega_{h,i} > \widetilde{\omega}_{h,i} = \frac{h+1}{h+i+1} \binom{h + 2i}{h+i} (d-1)^{h+i}$.
	\end{lemma}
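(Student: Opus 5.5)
We reduce both claims to counting lattice paths by their depth profiles. An $(h,i)$-constrained walk on a rooted tree is specified by two pieces of data. The first is its depth sequence: a sequence of $h+2i$ steps $\pm 1$, with $h+i$ up-steps and $i$ down-steps, starting at depth $0$, never going below depth $0$, and ending at depth $h$. The second is, for each up-step, a choice of which child to move to. A down-step is forced, since it goes to the unique parent. So in $\widetilde{\mathcal{T}}$, where every vertex has exactly $d-1$ children, each admissible depth sequence corresponds to exactly $(d-1)^{h+i}$ walks. Hence
\[
\widetilde{\omega}_{h,i} = N_{h,i}\,(d-1)^{h+i},
\]
where $N_{h,i}$ is the number of $\pm1$ sequences with $h+i$ up-steps and $i$ down-steps whose partial sums stay nonnegative.

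To compute $N_{h,i}$ we use the reflection principle, the standard ballot argument. There are $\binom{h+2i}{i}$ unrestricted sequences. A bad sequence is one that hits depth $-1$. Reflecting its prefix up to the first such hit maps the bad sequences bijectively onto all sequences from $-2$ to $h$, that is, sequences with $h+i+1$ up-steps and $i-1$ down-steps; there are $\binom{h+2i}{i-1}$ of these. Therefore
\[
N_{h,i} = \binom{h+2i}{i} - \binom{h+2i}{i-1} = \binom{h+2i}{h+i}\Bigl(1 - \frac{i}{h+i+1}\Bigr) = \frac{h+1}{h+i+1}\binom{h+2i}{h+i}.
\]
The middle step is a routine ratio simplification, and the case $i=0$ is immediate. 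This gives the claimed formula for $\widetilde{\omega}_{h,i}$.

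For the strict inequality $\omega_{h,i} > \widetilde{\omega}_{h,i}$, we compare the two trees through the same depth sequences. In $\mathcal{T}$ the root has $d$ children and every other vertex has $d-1$ children, so up-steps taken from a vertex at depth $\ge 1$ have $d-1$ choices and up-steps taken from the root have $d$ choices. Every admissible depth sequence of positive length starts with an up-step from the root. It therefore contributes $d^{m}(d-1)^{h+i-m}$ walks, where $m\ge 1$ is the number of up-steps taken from the root, and this exceeds $(d-1)^{h+i}$. The only exception is $h=i=0$, where both counts equal $1$, so that degenerate case must be excluded or the inequality read as non-strict there.

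Summing over depth sequences gives $\omega_{h,i} > \widetilde{\omega}_{h,i}$ whenever $h+2i\ge 1$. The same inequality can also be seen by viewing $\widetilde{\mathcal{T}}$ as the subtree of $\mathcal{T}$ obtained by deleting one child of the root. There is no real obstacle in this argument; the only care needed is the reflection bijection and the trivial case $h=i=0$.
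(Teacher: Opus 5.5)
Your proof is correct and follows essentially the paper's route: you split a walk into its nonnegative depth profile times $(d-1)^{h+i}$ child choices, you count the profiles by the ballot number (which you derive via reflection where the paper cites Bertrand's ballot theorem), and you compare $\mathcal{T}$ with $\widetilde{\mathcal{T}}$. Your explicit count $d^{m}(d-1)^{h+i-m}$ with $m\ge 1$ actually justifies the strict inequality, which the paper's embedding argument by itself only gives as $\omega_{h,i}\ge\widetilde{\omega}_{h,i}$; you are also right that $h=i=0$ gives equality, so strictness holds only when $h+2i\ge 1$.
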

	
	\begin{proof}
		First note that we can embed the infinite $(d-1)$-ary tree into the infinite $d$-regular tree, with the roots of both trees identified. Hence every $(h, i)$-constrained walk on $\widetilde{\mathcal{T}}$ is uniquely mapped to an $(h, i)$-constrained walk on $\mathcal{T}$ under this embedding. Therefore, $\omega_{h,i} > \widetilde{\omega}_{h,i}$. 
		
		We will next determine $\widetilde{\omega}_{h,i}$. Starting at the root $\widetilde{\rho}$ of $\widetilde{\mathcal{T}}$, to reach depth $h$ in a walk of length $h +2i$, exactly $h + i$ transitions must increase the depth by $1$, and $i$ transitions must decrease the depth by $1$. The number of such walks is equivalent to the number of walks of length $h + 2i$ on $\mathbb{Z}$ starting at $0$ and ending at $h$ which never become negative. By Bertrand's Ballot Theorem, the number of such walks is known to be $\frac{h+1}{h+i+1} \binom{h + 2i}{h+i}$. For each of the $h + i$ downward transitions we have $d-1$ choices, since each vertex has exactly $d-1$ children.
		
		Therefore the total number of such walks is $\frac{h+1}{h+i+1} \binom{h + 2i}{h+i} (d-1)^{h+i}$. The result follows.
	\end{proof}
	
	\begin{remark} \label{rem:walk_path_ratio}
		By symmetry in $\mathcal{T}$ and the fact that in a tree there is a unique path of length $h$ from the root to a vertex at depth $h$, we note that given a set $\mathcal{P}$ of paths from the root of $\mathcal{T}$ to depth $h$, the number of $(h, i)$-constrained walks which follow a path in $\mathcal{P}$ is $\frac{|\mathcal{P}|}{d(d-1)^{h-1}} \ \omega_{h,i}$. 
		
		Analogous counts hold for $\widetilde{\mathcal{T}}$.
	\end{remark}
	
	\subsection{Counts of \texorpdfstring{$(h, i)$}{(h, i)}-constrained walks avoiding small cycles in regular graph} \label{sec:rrg_prop2}
	
	Recall that we will view random walks on a regular graph starting at a vertex $u$ as walks on the non-backtracking walk tree (NBWT) $\mathcal{T}_u$ starting at the root, as introduced in Section \ref{sec:extended_proof_overview}. We  will use NBWTs as a convenient tool to bound the number of $(h,i)$-constrained walks from a given vertex in a regular graph that avoid a given subset of vertices, namely vertices which lie on small cycles of length at most $\rr$. 
	
	For any two vertices $u$ and $v$ in a graph $G$, let $w_{h,i}(v, u)$ be the set of $(h, i)$-constrained walks in the NBWT $\mathcal{T}_v$ which pass through a vertex labelled $u$. The following lemma upper bounds the total number of $(h,i)$-constrained walks that pass through a vertex labelled $u$ across all NBWTs of $G$.
	
	\begin{lemma} \label{path_count_lem}
		Let $h, i \in \mathbb{N}$ and $u \in V$. Then, \[\sum_{v \in V} |w_{h,i}(v,u)| \leq \frac{1}{2}(h+i+1)(h+3i+2) \ \omega_{h,i}.\]
	\end{lemma}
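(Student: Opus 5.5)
The plan is to turn the sum over all non-backtracking walk trees $\mathcal{T}_v$ into a count on the single infinite $d$-regular tree $\mathcal{T}$. I will use two facts. First, every $\mathcal{T}_v$ is isomorphic, as a rooted tree, to $\mathcal{T}$: the root has $d$ children and every other node has $d-1$ children. Second, nodes labelled $u$ are in bijection with non-backtracking walks ending at $u$, and these can be reversed.

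\emph{Step 1 (union bound over the visited node).} A walk in $w_{h,i}(v,u)$ passes through at least one node $x\in V(\mathcal{T}_v)$ labelled $u$. Therefore
\[
\sum_{v\in V}|w_{h,i}(v,u)| \le \sum_{v\in V}\ \sum_{x\in V(\mathcal{T}_v),\ \mathrm{label}(x)=u} N_v(x),
\]
where $N_v(x)$ is the number of $(h,i)$-constrained walks in $\mathcal{T}_v$ that visit $x$. Such a walk never goes deeper than $h+i$, so only nodes $x$ of depth $k\in\{0,\dots,h+i\}$ contribute. Fix a rooted isomorphism $\mathcal{T}_v\cong\mathcal{T}$. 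It maps $(h,i)$-constrained walks to $(h,i)$-constrained walks, and the automorphism group of $\mathcal{T}$ fixing $\rho$ acts transitively on each level (Remark~\ref{walk_count_root_vertex_equiv} and Remark~\ref{rem:walk_path_ratio}). Hence $N_v(x)=N_k$ depends only on the depth $k$ of $x$. Here $N_k$ is the number of $(h,i)$-constrained walks in $\mathcal{T}$ through a fixed depth-$k$ node.

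\emph{Step 2 (reversal bijection).} A depth-$k$ node of $\mathcal{T}_v$ labelled $u$ is a non-backtracking walk $(v=x_0,\dots,x_k=u)$. Reversing it gives a non-backtracking walk of length $k$ starting at $u$, and this map is a bijection between
\[
\bigl\{(v,x): x\in V(\mathcal{T}_v) \text{ at depth } k \text{ with label } u\bigr\}
\quad\text{and}\quad
\{\text{non-backtracking walks of length } k \text{ from } u\}.
\]
The second set has size $d(d-1)^{k-1}$ for $k\ge1$ and size $1$ for $k=0$. This is exactly the number $D_k$ of depth-$k$ nodes of $\mathcal{T}$. Consequently the right-hand side of Step 1 equals $\sum_{k=0}^{h+i} D_k N_k$.

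\emph{Step 3 (double counting in $\mathcal{T}$).} The quantity $D_kN_k$ counts pairs (walk, depth-$k$ node visited by the walk), where the walk ranges over the $(h,i)$-constrained walks in $\mathcal{T}$. Summing over $k$ counts pairs (walk, node visited). A walk of length $h+2i$ visits at most $h+2i+1$ distinct nodes, so
\[
\sum_{v}|w_{h,i}(v,u)|\le (h+2i+1)\,\omega_{h,i}.
\]
It remains to note that $h+2i+1\le \tfrac12(h+i+1)(h+3i+2)$ for all $h,i\in\mathbb{N}$. Indeed, $(h+i+1)(h+3i+2)\ge 2(h+i+1)+(h+i+1)(h+3i)\ge 2(h+2i+1)$, since the last inequality reduces to $(h+i+1)(h+3i)\ge 2i$, and $(h+i+1)(h+3i)\ge 3i\ge 2i$ because $h+i+1\ge1$ and $h+3i\ge 3i$. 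This gives the stated bound.

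The step that needs the most care is Step 2. One must check that reversing a non-backtracking walk is again non-backtracking, which is clear. One must also check that the level sizes of every $\mathcal{T}_v$ match those of $\mathcal{T}$ even though $G$ contains cycles. Both points are immediate for a $d$-regular $G$, since the NBWT construction never uses acyclicity of $G$.
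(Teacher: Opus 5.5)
Your proof is correct. It actually establishes the stronger bound $\sum_{v}|w_{h,i}(v,u)|\le (h+2i+1)\,\omega_{h,i}$, which you then relax to the stated one.

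The paper argues differently. It re-roots each $\mathcal{T}_v$ at a node labelled $u$ and identifies the result with $\mathcal{T}_u$. It then counts walks in $\mathcal{T}_u$ that start at an arbitrary node $x$ at distance $r\le h+i$ from the root, pass through the root, and end at distance $h$ from $x$. It union-bounds over the pair $(r,t)$, where $t$ is a step at which the walk sits at the root, and uses symmetry of the sphere of radius $r$ around $x$ to bound each pair's total contribution by $\omega_{h,i}$. Summing gives $\sum_{r=0}^{h+i}(h+2i+1-r)=\tfrac12(h+i+1)(h+3i+2)$.

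Your reversal bijection plays the role of the re-rooting step. It shows that, over all $v$, the $u$-labelled depth-$k$ nodes are exactly as numerous as level $k$ of $\mathcal{T}$. It is more elementary, since it avoids justifying that re-rooting $\mathcal{T}_v$ yields a labelled copy of $\mathcal{T}_u$. More importantly, you union-bound over distinct visited nodes rather than over (start distance, visit time) pairs. Your double count of (walk, visited node) pairs in a single tree therefore collapses the sum to a linear factor, whereas the paper bounds each $(r,t)$ pair separately by $\omega_{h,i}$.

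You could sharpen your bound further to $(h+i+1)\,\omega_{h,i}$. In a tree, a walk from the root reaches a new node only on a depth-increasing step, and there are $h+i$ of those. The improvement is immaterial for the paper's application to counting $(h,i)$-sparse vertices, where only polylogarithmic factors matter.
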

	
	\begin{proof}
		Fix a vertex $v$ and a walk in $\mathcal{T}_v$ of length $h + 2i$ from the root to depth $h$ that passes through a vertex $\hat\rho$ labelled $u$. Re-rooting the tree $\mathcal{T}_v$ at $\hat{\rho}$ results in another labelled tree isomorphic to $\mathcal{T}_u$, with the image of the walk passing through the new root $\hat{\rho}$.
		
		In particular, the walks $w_{h,i}(v,u)$ in $\mathcal{T}_v$ correspond to walks in $\mathcal{T}_u$ of length $h + 2i$ which:
		\begin{enumerate}[i.]
			\item pass through the root $\hat{\rho}$ of $\mathcal{T}_u$,
			\item start at some vertex $x \in V(\mathcal{T}_u)$ labelled $v$,
			\item and end at a vertex distance $h$ away from $x$ in the tree $\mathcal{T}_u$.
		\end{enumerate}
		
		Note that a walk in $w_{h,i}(v,u)$ on $\mathcal{T}_v$ may correspond to multiple such walks on $\mathcal{T}_u$, depending on how many times a vertex labelled $u$ appears on the walk. 
		
		Therefore to upper bound $\sum_{v \in V} |w_{h,i} (v,u)|$, it suffices to count the number of walks of length $h+2i$ in $\mathcal{T}_u$ which start at any vertex, pass through the root, and end at a vertex distance $h$ away from the starting vertex. Let $\W$ be the set of all such walks in $\mathcal{T}_u$.
		
		Given $r \in \mathbb{N}$ and $x \in V(\mathcal{T}_u)$, let $\partial B(x,r)$ be the set of vertices in $\mathcal{T}_u$ which are distance $r$ away from $x$. Then $|\partial B(x, r)| = d(d-1)^{r-1}$ by regularity, and hence the cardinality is invariant of $x$. Also, given $t \in \mathbb{N}$ and $x, y \in V(\mathcal{T}_u)$, let $\W_{x, t} (y)$ be the set of all walks of length $h+2i$ in $\mathcal{T}_u$ starting at $x$ which pass through $y$ at step $t$.
		
		Now, any walk in $\W$ must start at most a distance $h+i$ away from the root $\hat{\rho}$ of $\mathcal{T}_u$, since they have to pass through the root and end a distance $h$ from the start. Therefore, for every walk in $\mathcal{W}$ there exists $r \in \mathbb{N}$ such that $0 \leq r \leq h + i$, the walk starts at a vertex $x \in \partial B(\hat\rho, r)$, and the walk passes through the root $\hat{\rho}$ at step $t \in \mathbb{N}$ where $r \leq t \leq h + 2i$. In particular we have that 
		\begin{equation} \label{eq:pcl1}
			\W = \bigcup_{r = 0}^{h+i} \bigcup_{t = r}^{h+2i} \bigcup_{x \in \partial B(\hat\rho, r)} \W_{x,t}(\hat\rho).
		\end{equation}
		
		Let $\W_{x,t}^r$ be the union of $\W_{x,t} (y)$ for all $y \in \partial B(x,r)$. Note that for any two distinct vertices $y, z \in V(\mathcal{T}_u)$, the sets $\W_{x,t} (y)$ and $\W_{x,t} (z)$ are disjoint, since the walks differ at step $t$. Therefore, 
		\begin{equation} \label{eq:pcl2}
			\left|\W_{x,t}^r\right| = \sum_{y \in \partial B(x,r)} \left|\W_{x,t} (y)\right|.
		\end{equation}
		
		By symmetry in $\mathcal{T}_u$, for any two vertices $y, z \in \partial B(x,r)$, the number of walks of length $h+2i$ which start at $x$ and end at distance $h$ from $x$ while passing through $y$ and $z$ respectively at step $t$ is equal, which is to say that $|\W_{x,t} (y)| = |\W_{x,t} (z)|$. Therefore in conjunction with (\ref{eq:pcl2}) and the fact that $|\partial B(x, r)| = d(d-1)^{r-1}$, for all $y \in \partial B(x,r)$ we have that
		\begin{equation} \label{eq:pcl3}
			|\W_{x,t}^r| = d(d-1)^{r-1} |\W_{x,t} (y)|.
		\end{equation}
		
		Furthermore, $\W_{x,t}^r$ is a subset of all walks of length $h+2i$ in $\mathcal{T}_u$ starting at $x$ which end a distance $h$ away from $x$, and hence by Remark \ref{walk_count_root_vertex_equiv} we have that 
		\begin{equation} \label{eq:pcl4}
			|\W_{x,t}^r| \leq \omega_{h,i}.
		\end{equation}
		
		In particular, if the root $\hat{\rho}$ is in $\partial B (x,r)$ then by (\ref{eq:pcl3}) and (\ref{eq:pcl4}) we have that
		\begin{equation} \label{eq:pcl5}
			|\W_{x,t} (\hat\rho)| \leq \frac{\omega_{h,i}}{d(d-1)^{r-1}}.
		\end{equation}
		
		Hence from (\ref{eq:pcl1}) and (\ref{eq:pcl5}) along with the fact that $|\partial B(\hat\rho, r)| = d(d-1)^{r-1}$, we have that 
		\begin{align*}
			|\W| \leq \sum_{r = 0}^{h+i} \sum_{t = r}^{h+2i} \sum_{x \in \partial B(\hat\rho, r)} |\W_{x,t} (\hat\rho)| \leq \sum_{r = 0}^{h+i} \sum_{t = r}^{h+2i} \sum_{x \in \partial B(\hat\rho, r)} \frac{\omega_{h,i}}{d(d-1)^{r-1}} = \frac{1}{2}(h+i+1)(h+3i+2) \ \omega_{h,i}
		\end{align*}
		as required.
	\end{proof}
	
	Recall that a walk on a graph is said to be $r$-acyclic (Definition \ref{def:r-acyclic-walk}) if it never visits a cycle of length less than $\rr$. We say that a vertex $u$ is $(h, i)$-sparse (Definition \ref{def:h-i-sparse}) if at least $\left(1 - {1}/{(\log n)^3}\right) \omega_{h, i}$ of the $(h, i)$-constrained walks from $u$ are $r$-acyclic. 
	
	The previous lemma, in conjunction with an upper bound on the number of vertices on cycles of length less than $\rr$, allows us to establish bounds on the number of $(h, i)$-sparse vertices in a random regular graph, for appropriate values of $h$ and $i$. However, we first require upper bounds on the number of vertices contained in such small cycles. Given a random $d$-regular graph, by $\mathcal{C}_k$ for $k \geq 3$ we will denote the number of cycles of length $k$. The following result establishes that for $r, d \geq 3$ such that $\sqrt{r}(d-1)^{3r/2 - 1} = o(n)$, the number of cycles $\mathcal{C}_3, \dots, \mathcal{C}_r$ are asymptotically jointly distributed as independent Poisson random variables $\mathcal{Z}_3, \dots, \mathcal{Z}_r$ with means $\frac{(d-1)^k}{2k}$ for $3 \leq k \leq r$.
	
	\begin{theorem}[Theorem 11, \cite{Johnson_2015}] \label{thm:cycle_size_count_dist}
		Let $G$ be a random $d$-regular graph on $n$ vertices with cycle counts $\left(\mathcal{C}_k \colon k \geq 3\right)$. Let $\left(\mathcal{Z}_k \colon k \geq 3\right)$ be independent Poisson random variables with $\mathbb{E}\left[\mathcal{Z}_k\right] = \frac{(d-1)^k}{2k}$. For any $n \geq 1$ and $r, d \geq 3$, $\TVD{\left(\mathcal{C}_3, \dots, \mathcal{C}_r\right)}{\left(\mathcal{Z}_3, \dots, \mathcal{Z}_r\right)} = O\left(\frac{\sqrt{r} (d-1)^{3r/2 - 1}}{n}\right)$.
	\end{theorem}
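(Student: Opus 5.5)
Since this is Theorem 11 of \cite{Johnson_2015}, the paper simply imports it. If I had to prove it, I would use the Stein--Chen method for Poisson process approximation (in the multivariate form of Barbour, Holst and Janson) applied to the vector of cycle counts. Write $\mathcal{C}_k = \sum_{\alpha \in \mathcal{A}_k} I_\alpha$, where $\mathcal{A}_k$ is the set of $k$-cycles in the complete graph $K_n$ and $I_\alpha$ indicates that $\alpha \subseteq G$. Stein's method bounds the total variation distance to independent Poissons by
\[
\sum_{\alpha} \mathbb{P}[I_\alpha = 1]\,\mathbb{E}\,\Bigl\|\bigl(\mathcal{C}_k\bigr)_{k \le r} - \bigl(\mathcal{C}^{(\alpha)}_k\bigr)_{k \le r}\Bigr\|_1
\]
plus a term measuring how far the means are from $(d-1)^k/(2k)$. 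Here $(\mathcal{C}^{(\alpha)}_k)$ is any coupled vector with the law of the counts of the \emph{other} cycles, conditioned on $I_\alpha = 1$.

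\textbf{Step 1 (means).} I would show $\mathbb{E}[\mathcal{C}_k] = \frac{(d-1)^k}{2k}\bigl(1 + O(k^2/n)\bigr)$ for $k \le r$. This follows either from the configuration model together with the standard estimate for the probability of simplicity, or from a switching count giving $\mathbb{P}[I_\alpha = 1] = \bigl((d-1)/n\bigr)^k\bigl(1 + O(k/n)\bigr)$. Summing the relative errors over $k \le r$ gives a mean-mismatch contribution of $O\bigl(r(d-1)^r/n\bigr)$, which is well within the target bound.

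\textbf{Step 2 (size-bias coupling).} For a fixed $\alpha$, I would build $G^{(\alpha)} \sim G \mid I_\alpha = 1$ from $G$ by switchings. For each edge $\{x,y\}$ of $\alpha$ missing from $G$, pick uniformly random edges $\{x,x'\}$ and $\{y,y'\}$ and replace them by $\{x,y\}$ and $\{x',y'\}$. A switching-count argument (comparing forward and backward switching numbers) shows that, after a small correction step, this produces the exact conditional law. The point is that $G^{(\alpha)}$ differs from $G$ in only $O(k)$ edges, all adjacent to $\alpha$.

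\textbf{Step 3 (bounding the discrepancy).} This is the main obstacle. One has to bound the expected number of short cycles ($\le r$) that are created or destroyed by the switchings, other than $\alpha$ itself. There are two sources.
\begin{itemize}
\item Destroyed cycles must use one of the $O(k)$ removed edges. The expected number of cycles of length $\le r$ through a given edge is $O\bigl((d-1)^{r}/n\bigr)$ per unit of $n$-normalisation.
\item Created cycles must use a new edge, and their count is controlled by the number of short paths between the switched endpoints.
\end{itemize}
Summing over $\alpha$ with weight $\mathbb{P}[I_\alpha = 1] \approx (d-1)^k/(2k)$ gives terms of order $(d-1)^{k + j}/n$ for pairs of lengths $k, j \le r$. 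Overlapping cycles (sharing a path) are where the $\frac{3r}{2}$ exponent should come from: two cycles sharing a segment form a theta-subgraph whose excess is $1$, and the sum over their lengths should give $(d-1)^{3r/2}$ up to the $\sqrt r$ factor. Optimising the Stein factors (which are $O(1)$, or better in the Poisson-process form) would then yield the claimed $O\bigl(\sqrt r\,(d-1)^{3r/2 - 1}/n\bigr)$. I expect that matching this exponent exactly, rather than a cruder $(d-1)^{2r}/n$, requires carefully bounding only the dependent overlapping configurations. That is the delicate part of \cite{Johnson_2015}, and for our purposes any bound that is $o(1)$ at $r = \Theta(\log\log n)$ would suffice.
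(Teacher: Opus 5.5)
Deferring to \cite{Johnson_2015} is exactly what the paper does: it states the result with no proof, so on that level your proposal agrees with it. Your self-contained sketch, however, would not deliver the stated bound, and the obstruction is not where you place it.

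In Step~3 you attribute the exponent $3r/2$ to theta graphs. But requiring two overlapping cycles to both have length at most $r$ does not cap their union at $3r/2$ edges. A $k$-cycle and a $j$-cycle sharing a single edge form a theta graph with $k+j-1$ edges and excess one. Its expected number is of order $(d-1)^{k+j}/n$, up to factors depending only on $d$; the $3r/2$ cap needs all three cycles of the theta graph to be short. Summing over $k,j\le r$, your discrepancy term is of order $(d-1)^{2r}/n$. With $O(1)$ Stein factors you therefore only recover the older regime $(d-1)^{2r-1}=o(n)$.

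The $3r/2$ in Johnson's theorem comes from the Stein factors instead. He uses Stein's method of exchangeable pairs. In the Chatterjee--Diaconis--Meckes multivariate Poisson bound, the error in coordinate $k$ is weighted by a factor of order $\min\{1,\lambda_k^{-1/2}\}$, where $\lambda_k=(d-1)^k/(2k)$. The overlap counting gives an error of order $(d-1)^{k+r-1}/n$ in coordinate $k$. After weighting this becomes $O(\sqrt{k}\,(d-1)^{k/2+r-1}/n)$, and summing over $k\le r$ gives exactly $O(\sqrt r\,(d-1)^{3r/2-1}/n)$. The $\sqrt r$ is the $\sqrt{2r}$ in $\lambda_r^{-1/2}$. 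This coordinate-dependent decay of the Stein factors is the missing idea, not a finer count of dependent configurations.

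Your Step~2 is also unsupported as stated. Inserting the missing edges of $\alpha$ by switchings does not produce the exact law of $G$ given $\alpha\subseteq G$. The numbers of forward and backward switchings depend on the local structure of the graph, and a uniformly chosen edge at $x$ may itself be an edge of $\alpha$. So at best you obtain an approximate coupling, whose total-variation error must be added and bounded. Johnson sidesteps this. Applying one uniformly random switching to $G$ gives an exactly exchangeable pair for free, since the switching relation is symmetric and $G$ is uniform. He then only needs switching counts to estimate the conditional probabilities of creating or destroying exactly one $k$-cycle.

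Finally, the paper's actual use of this theorem (Corollary~\ref{cor:cycle_size_max_vertex_count}, applied with $r=\Theta(\log\log n)$) needs none of this. Your Step~1 alone gives $\mathbb{E}[\mathcal{X}_{\leq r}(G)]\le\sum_{k\le r}k\,\mathbb{E}[\mathcal{C}_k]=O((d-1)^r)$. To see this, compute in the configuration model and divide by the probability of simplicity, which is bounded below by a constant. Markov's inequality then gives $\mathcal{X}_{\leq r}(G)\le(\log n)(d-1)^r$ with probability $1-O(1/\log n)$.
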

	
	The following corollary follows immediately from Theorem \ref{thm:cycle_size_count_dist}. 
	
	\begin{corollary}\label{cor:cycle_size_max_vertex_count}
		Let $G$ be a random $d$-regular graph on $n$ vertices with $\mathcal{X}_{\leq k} (G)$ vertices on cycles of length at most $k$. For any $n \geq 1$ and $r, d \geq 3$ such that $\sqrt{r}(d-1)^{3r/2 - 1} = o(n)$, with probability $1 - o(1)$ over $G \sim \rrg$, $\mathcal{X}_{\leq r} (G) \leq (\log n) (d-1)^r$.
	\end{corollary}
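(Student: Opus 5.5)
The plan is to derive Corollary~\ref{cor:cycle_size_max_vertex_count} from Theorem~\ref{thm:cycle_size_count_dist}, a first-moment bound for the Poisson variables, and Markov's inequality. Every vertex lying on a cycle of length at most $r$ lies on some cycle $\mathbf{c}$ of length $k\in\{3,\dots,r\}$, and such a cycle has exactly $k$ vertices. Hence, deterministically,
\[
\mathcal{X}_{\leq r}(G) \leq \sum_{k=3}^{r} k\,\mathcal{C}_k .
\]
It therefore suffices to show that $Y := \sum_{k=3}^r k\,\mathcal{C}_k \leq (\log n)(d-1)^r$ with probability $1-o(1)$.

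First we pass to the Poisson model. By Theorem~\ref{thm:cycle_size_count_dist} and the hypothesis $\sqrt{r}(d-1)^{3r/2-1}=o(n)$, there is a coupling of $(\mathcal{C}_3,\dots,\mathcal{C}_r)$ with independent $(\mathcal{Z}_3,\dots,\mathcal{Z}_r)$ that agree with probability $1-o(1)$. Equivalently, for the event $A := \{ (c_k) : \sum_k k c_k > (\log n)(d-1)^r\}$ we have $\mathbb{P}[(\mathcal{C}_k)\in A] \leq \mathbb{P}[(\mathcal{Z}_k)\in A] + o(1)$.

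Next we bound the Poisson side by a first-moment computation:
\[
\mathbb{E}\Big[\sum_{k=3}^r k\,\mathcal{Z}_k\Big] = \sum_{k=3}^r \frac{(d-1)^k}{2} \leq \frac{(d-1)^{r+1}}{2(d-2)} \leq \frac{(d-1)^r}{2}\cdot\frac{d-1}{d-2} \leq (d-1)^r ,
\]
using $d\geq 3$, so that $(d-1)/(d-2)\leq 2$. Markov's inequality then gives $\mathbb{P}[(\mathcal{Z}_k)\in A] \leq 1/\log n = o(1)$. Combining the two steps yields $\mathbb{P}[\mathcal{X}_{\leq r}(G) > (\log n)(d-1)^r] = o(1)$.

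The argument has no real obstacle. The only points to check are that the total variation bound applies to the event $A$ (it does, since total variation bounds every event) and that the hypothesis makes the error term $o(1)$. If $r$ is allowed to depend on $n$ (as with $r=\rr=\Theta(\log\log n)$), the same argument goes through verbatim, because the Markov bound is uniform in $r$.
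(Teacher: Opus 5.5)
Your proposal is correct. It is exactly the argument the paper has in mind when it says the corollary ``follows immediately'' from Theorem~\ref{thm:cycle_size_count_dist}: bound $\mathcal{X}_{\leq r}(G)$ by $\sum_{k=3}^r k\,\mathcal{C}_k$, pass to the independent Poisson variables at total-variation cost $o(1)$, and apply Markov's inequality using $\mathbb{E}[\sum_{k} k\,\mathcal{Z}_k]=\sum_{k=3}^r (d-1)^k/2\leq (d-1)^r$; the paper gives no further details, so your write-up fills them in.
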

	
	We are now in a position to prove the main result for this section.
	
	\rrgGlblGeom*
	
	\begin{proof}
		Since $r = O(\log \log n)$ then $\sqrt{r} (d-1)^{3r/2 - 1} = o(n)$ and by Corollary \ref{cor:cycle_size_max_vertex_count}, the number of vertices on cycles of length at most $r$ is, with probability $1 - o(1)$ over $G \sim \rrg$, at most $(\log n)(d-1)^r$. Let $\mathcal{B}$ be the set of vertices contained in such cycles. 
		
		Fix $h, i \in \mathbb{N}$ such that $h, i \leq \left({4}/{\pmin}\right) \log_{d - 1} n$. Given a vertex $u \in \mathcal{B}$, for every vertex $v \in V$ let $w_{h,i}(v, u)$ be the set of $(h, i)$-constrained walks from $v$ which pass through $u$. 
		
		For a vertex $v$ to be $(h, i)$-sparse, it suffices that $\sum_{u \in \mathcal{B}} |w_{h,i}(v,u)| < ({1}/{(\log n)^3}) \omega_{h,i}$, which is to say that the $(h, i)$-constrained walks from $v$ which pass through a vertex in $\mathcal{B}$ only make up at most a ${1}/{(\log n)^3}$ fraction of all possible $(h, i)$-constrained walks from $v$. 
		
		Define $V_\textrm{bad} (h, i) \coloneqq \{v \in V \colon \sum_{u \in \mathcal{B}} |w_{h,i}(v,u)| \geq ({1}/{(\log n)^3}) \omega_{h,i}\}$. By the previous remark, every vertex in $V \setminus V_\textrm{bad} (h, i)$ is $(h, i)$-sparse. Hence it suffices to upper bound the size of $V_\textrm{bad} (h, i)$. By Lemma \ref{path_count_lem} we have that
		\begin{equation*}
			\sum_{u \in \mathcal{B}} \sum_{v \in V} |w_{h,i}(v,u)| \leq  |\mathcal{B}| (1/2)(h+i+1)(h+3i+2) \ \omega_{h,i} \leq \left(1 + {8}/{\pmin}\right)^2 |\mathcal{B}| (\log_{d - 1} n)^2 \omega_{h,i}
		\end{equation*}
		where the last inequality follows from $h, i \leq ({4}/{\pmin}) \log_{d - 1} n$. We also have the following lower bound,
		\begin{equation*}
			\sum_{u \in \mathcal{B}} \sum_{v \in V} |w_{h,i}(v,u)| \geq \sum_{v \in V_\mathrm{bad} (h, i)} \sum_{u \in \mathcal{B}} |w_{h,i}(v,u)| \geq |V_\textrm{bad} (h, i)| ({1}/{(\log n)^3}) \omega_{h,i}
		\end{equation*}
		where the last inequality follows by the definition of $V_\textrm{bad} (h, i)$.
		
		Therefore $|V_\textrm{bad} (h, i)| ({1}/{(\log n)^3}) \omega_{h,i} \leq \left(1 + {8}/{\pmin}\right)^2 |\mathcal{B}| (\log_{d - 1} n)^2 \omega_{h,i}$ and hence \[|V_\textrm{bad} (h, i)| \leq \left(\frac{1 + {8}/{\pmin}}{\log(d - 1)}\right)^2 (d - 1)^r (\log n)^6\] since $\left|\mathcal{B}\right| \leq (\log n) (d-1)^r$. Let $V_{\mathrm{bad}} = \bigcup\limits_{h, i \leq ({4}/{\pmin}) \log_{d-1} n} V_{\mathrm{bad}} (h, i)$. Then by the upper bound on each $V_{\mathrm{bad}} (h, i)$, we have that $\left|V_{\mathrm{bad}}\right| = O\left((d - 1)^r (\log n)^8\right)$. In particular, all the vertices in $V \setminus V_{\mathrm{bad}}$ are $(h, i)$-sparse for all $h, i \leq ({4}/{\pmin}) \log_{d - 1} n$ and hence the result follows.
	\end{proof}
	
	\subsection{Proof of Lemma \ref{lem:phase1_walk_counts}}
    \label{sec:rrg_prop}
	
	From Lemma \ref{lem:rrg_glbl_geom} we can prove the first of our two main results for this section.

    \begin{figure}[ht!]
			\centering
			\input{Figures/fig_vgood_walks}
			\vspace*{-1cm}
			\caption{Illustration of the paths on $T_j$ reaching $k$-roots which are also in the set $S$. The ball radius $\log_{d-1} \log n$ highlighted in blue illustrates that at suitable depths every vertex is a $k$-root. Furthermore, the vertices highlighted by \textcolor{red}{$\times$} are not in $S$.}
			\label{fig:vgood_walks}
		\end{figure}
	
	\phaseOneWalkCounts*
	
	\begin{proof}
		By Lemma \ref{lem:rrg_glbl_geom}, with probability $1 - o(1)$ over $G \sim \rrg$, there exist $n - O\left(\mathrm{polylog}(n)\right)$ vertices which are $(h, i)$-sparse for all $h, i \in \mathbb{N}$ such that $h, i \leq \left({4}/{\pmin}\right) \log_{d - 1} n$. 
        
        Furthermore, by Lemma 2.1 in \cite{Lubetzky_2010}, we have that, with probability $1 - o_n(1)$ over $G \sim \rrg$, every ball of radius $R$ in $G$ contains at most one cycle. Hence, with probability $1 - o_n(1)$ over $G \sim \rrg$, both of these properties hold. Fix such a graph $G$. 
        
        Let $S$ be the set of all vertices in $G$ which are $(h, i)$-sparse for all $h, i \in \mathbb{N}$ such that $h, i \leq \left({4}/{\pmin}\right) \log_{d - 1} n$.
		
		Fix a vertex $u \in V$ and consider the subgraph $B$ induced by the ball $B_R (u)$. Since there is at most one cycle in this subgraph, deleting $u$ from $B$ results in at least $d - 1$ components; in particular there must be $d - 2$ rooted trees $T_1, \dots, T_{d-2}$, whose roots are neighbours of $u$, call them $u_1, \dots, u_{d-2}$ respectively. Furthermore, the (at most) unique cycle in $B$ is entirely contained in $H \coloneqq B \setminus \cup_{j = 1}^{d-1} T_j$.
		
		Fix $1 \leq j \leq d-2$ and $h, i \in \mathbb{N}$ such that ${R}/{10} \leq h \leq {R}/{5}$ and $i \leq {R}/{5}$. The sub-tree $T_j$ corresponding to $u_j$ has degree $d - 1$ at the root and degree $d$ at all other non-leaf vertices. Hence $T_j$ is a $(d-1)$-ary tree of depth $R - 1$. Since $h + i \leq {2R}/{5} \leq R - 1$ and $(h, i)$-constrained walks never exceed depth $h + i$, the number of $(h, i)$-constrained walks on $T_j$ is equal to that on the infinite $(d-1)$-ary tree, namely $\widetilde{\omega}_{h, i}$.
		
		Consider an $(h, i)$-constrained walk on $T_j$. Suppose a vertex in this walk is contained in a cycle of length less than $r$ in $G$. Since the walk reaches a depth of at most $h + i \leq {2R}/{5}$ and the cycle has length at most $r$ (which is less than ${R}/{10}$ for all $n$ sufficiently large), then every vertex in the cycle is at distance at most ${R}/{2}$ away from $u_j$ in $G$. 
		
		Therefore, since $u_j$ is a neighbour of $u$, every vertex in the cycle is at distance at most $({R}/{2}) + 1$ away from $u$ in $G$. Therefore the cycle is entirely contained in the subgraph $B$ induced by the ball $B_R (u)$. However, this cycle contains at least one vertex in $T_j$, and the (at most) unique cycle in $B$ must be entirely contained in $H$, which is disjoint from $T_j$. Therefore such a cycle cannot exist, and hence every such walk in $T_j$ does not visit a vertex on a cycle of length less than $r$.
		
		Now for every vertex in $T_j$ at depth greater than $k = \log_{d-1} \log n$ but no more than $R - k - 1$, the ball radius $k$ around such a vertex is does not contain any cycle in $G$, and hence every such vertex is a $k$-root. This is illustrated in Figure \ref{fig:vgood_walks}. Given $n$ sufficiently large, it follows that for all $h \in \mathbb{N}$ such that ${R}/{10} \leq h \leq {R}/{5}$, all the vertices in $T_j$ at depth $h$ are $k$-roots. 
		
		Furthermore, since ${R}/{10} \leq h$ and $R = (1/5) \log_{d-1} n$, we have that $(d-1)^h = \Omega\left(n^{1/50}\right)$. Since $|V \setminus S| = O\left(\mathrm{polylog}(n)\right)$, it follows that for $n$ sufficiently large, $|V \setminus S| < \frac{1}{2} (d-1)^h$. Therefore, at least half of the vertices at depth $h$ in $T_j$ are $k$-roots and in $S$, i.e., in $\Vgood$. 
		
		Hence for all $h, i \in \mathbb{N}$ satisfying ${R}/{10} \leq h \leq {R}/{5}$ and $i \leq {R}/{5}$, at least half of the $(h, i)$-constrained walks from $u_j$ in $T_j$ end at a vertex in $\Vgood$, and we have shown that such a walk on $T_j$ never visits a cycle of length less than $r$. Since there are at least $d - 2$ such neighbours of $u$, the result follows.
	\end{proof}
	
	\subsection{Proof of Lemma \ref{lem:sparse_good_set_size}} \label{sec:rrg_prop3}
	
	For any two vertices $u, v \in V$ and $h \in \mathbb{N}$, by $\mathcal{P}_{h} (u, v)$ we denote the family of paths length $h$ between $u$ and $v$. Recall that $\Hmin \coloneqq \left\lfloor\log_{d-1} n\right\rfloor + 2 \left\lfloor\log_{d-1}\log n\right\rfloor$ and $\Hmax \coloneqq \left\lfloor\log_{d-1} n\right\rfloor + \left\lfloor (1/10)\log_{d-1} n\right\rfloor - 1$. Before proving Lemma \ref{lem:sparse_good_set_size}, we require the following result.
	
	\begin{lemma}[\cite{Lubetzky_2010}, Lemma 3.5]\label{lem:kroot_path_counts}
		With probability $1 - o_n(1)$ over $G \sim \rrg$, for any $u, v \in \Vroots$ and $h \in \mathbb{N}$ satisfying $d(u, v) > 2 k$ and $h \in [\Hmin, \Hmax]$: $\left|\mathcal{P}_{h} (u, v)\right| \geq \left(\frac{1-o_n(1)}{n}\right) d(d-1)^{h - 1}$.
	\end{lemma}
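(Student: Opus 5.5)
The statement is cited from Lubetzky--Sly, so the plan is to recall the structure of their argument rather than devise a new one. We work in the configuration model: a bound holding with probability $1-o(1)$ there transfers to $\rrg$, because simple graphs carry probability bounded below. The key step is to isolate, for each $k$-root $u$, a ``tree part'' of its neighbourhood. Since $G[B_k(u)]$ is a tree, $\partial B_k(u)$ has exactly $d(d-1)^{k-1}$ vertices.

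First, we expose the ball around $u$ by breadth-first search to radius roughly $h_1 \approx \tfrac12(\log_{d-1}n)+O(\log_{d-1}\log n)$. By Lemma~2.1 of \cite{Lubetzky_2010}, each radius-$R$ ball has at most one cycle. Hence all but $O(1)$ of the branches hanging off $\partial B_k(u)$ are genuine $(d-1)$-ary trees to depth $h_1-k$, and the ball's boundary has $(1-o(1))\,d(d-1)^{h_1-1}$ leaves.

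Second, we do the same around $v$ to depth $h_2 = h-h_1$, using disjoint explorations. The condition $d(u,v)>2k$ ensures the two $k$-balls are disjoint, so the tree structure on each side is not interfered with.

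Third, a path of length $h$ from $u$ to $v$ corresponds, up to lower order, to a matching edge between a leaf half-edge of the $u$-tree and a leaf half-edge of the $v$-tree. The unexposed half-edges are matched uniformly at random. The number of such cross pairings is then hypergeometric-like, with mean
\[
\frac{\bigl(d(d-1)^{h_1-1}\bigr)\bigl((d-1)^{h_2}\bigr)}{dn}\approx \frac{d(d-1)^{h-1}}{n}.
\]
The choice $h\ge \Hmin$ makes this mean at least of order $(\log n)^2$. That is large enough for Chernoff/Azuma concentration to give a lower tail $\exp(-\Omega(\text{mean}\cdot \text{small}))$.

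The main obstacle is the union bound over all pairs $u,v$ and all $h\in[\Hmin,\Hmax]$: there are $n^2\cdot O(\log n)$ of them, so we need per-pair failure probability $o(n^{-2}/\log n)$. A mean of order only $\log^2 n$ gives failure $e^{-\Omega(\log^2 n)\epsilon^2}$, which suffices provided the relative error $\epsilon=o(1)$ decays slowly, e.g.\ $\epsilon=(\log n)^{-1/3}$. This is exactly why $\Hmin$ carries the $2\log_{d-1}\log n$ term. The delicate part is making the concentration rigorous under the exposure dependencies. We would handle this as Lubetzky--Sly do, via a martingale over the sequential matching of leaf half-edges. At each step the bounded-difference constant is $O(1)$, since one pairing changes the count by at most one. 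We also discard the $O(1)$ bad branches and the $o(1)$ fraction of leaves already matched during exploration. The upper limit $\Hmax$ keeps both explored balls of size $o(n)$ (at most $n^{0.55}$), so the remaining matching stays nearly uniform.
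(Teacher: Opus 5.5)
The paper only imports this lemma from Lubetzky--Sly, so your outline has to stand on its own. It breaks at the step that is meant to use the hypotheses that $u,v$ are $k$-roots with $d(u,v)>2k$.

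\textbf{Nearby pairs.} Disjointness of $B_k(u)$ and $B_k(v)$ does not make explorations to depths $h_1,h_2\approx h/2$ disjoint, and the lemma is claimed for every pair with $d(u,v)\ge 2k+1\approx 2\log_{d-1}\log n$. If $d(u,v)\le h_1$, then $B_{h_1-d(u,v)}(v)\subseteq B_{h_1}(u)$, so there is no fresh tree around $v$ left to grow. If $d(u,v)$ only slightly exceeds $h_1$, the $u$--$v$ geodesic forces the exploration from $v$ into $u$'s ball within $O(1)$ steps, which, without further care, deletes a constant fraction of $v$'s leaves. These collisions are forced and shallow; they are not an ``$o(1)$ fraction of leaves already matched''.

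Handling them is exactly what the $k$-root hypothesis is for. Expose the disjoint trees $B_k(u)$ and $B_k(v)$, each with $d(d-1)^{k-1}\asymp\log n$ boundary branches. Grow each side outward from its $k$-sphere while forbidding entry into the other side's $k$-ball and explored region. Then show that only $O(1)$ branches per side are cut at shallow depth. When $d(u,v)\le R/2$ this is deterministic, from the one-cycle property: three cut branches of $u$ would give two independent cycles in $B_R(u)$. Otherwise it holds with probability $1-o(n^{-2})$, since each shallow random collision has probability $n^{-\Omega(1)}$. Discarding these branches costs only a relative $O(1/\log n)=o(1)$.

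Your ``$O(1)$ bad branches'' are instead attributed to cycles inside a single ball, and that justification is also wrong. Lemma~2.1 of Lubetzky--Sly covers only radius $R=\tfrac15\log_{d-1}n$. The ball $B_{h_1}(u)$ typically has polylogarithmically many cycles (polynomially many for $h$ near $\Hmax$), enough to touch most of the $\asymp\log n$ branches. What is true, and suffices, is that cycles beyond depth $R$ remove only an $o(1)$ fraction of the leaves.

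\textbf{Concentration.} This step also fails as you state it. Azuma--Hoeffding for a martingale over the $\gtrsim\sqrt n$ pairings of leaf half-edges, with $O(1)$ bounded differences, only controls deviations on the scale $n^{1/4}$. That is far above the mean $\lambda=d(d-1)^{h-1}/n\asymp(\log n)^2$, so it cannot give a tail $e^{-\Omega(\epsilon^2\log^2 n)}$. You need a variance-sensitive argument. One option is Freedman's inequality: each step creates a cross pair with conditional probability about $(d-1)^{h_2}/n$, so the total conditional variance is $O(\lambda)$. More simply, match the $u$-leaf half-edges one at a time. Each hits an unused $v$-leaf half-edge with conditional probability at least $(1-o(1))(d-1)^{h_2}/n$, so the count dominates a binomial, and Chernoff gives $\exp(-\epsilon^2\lambda/2)$.

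\textbf{Bookkeeping.} Since the lemma asserts the sharp constant $1-o(1)$, the constants matter. With one cross edge you need $h_1+h_2=h-1$. Counting half-edges on both sides, $d(d-1)^{h_1}$ and $d(d-1)^{h_2}$ out of $dn$, gives mean exactly $d(d-1)^{h-1}/n$. Your displayed expression with $h_2=h-h_1$ equals $(d-1)^{h-1}/n$, which is off by a factor $d$.
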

	
	We also take note of the following useful observations about the counts of paths between $k$-roots and the number of $(h, i)$-constrained walks between them.
	
	\begin{remark} \label{rem:k-root-walks-count}
		Observe that every simple path in a $d$-regular graph $G$ starting at a vertex $u$ corresponds to a simple path in the non-backtracking walk tree $\mathcal{T}_u$ starting at the root.
		
		Consequently, for any two $k$-roots $u$ and $v$ as in Lemma \ref{lem:kroot_path_counts} and $h \in \mathbb{N}$ such that $\Hmin \leq h \leq \Hmax$, each path in $\mathcal{P}_{h} (u, v)$ maps to a distinct path in $\mathcal{T}_u$ starting at the root. By Remark \ref{rem:walk_path_ratio}, for any $i \in \mathbb{N}$, the number of $(h, i)$-constrained walks which follow one of these paths in $\mathcal{T}_u$ is at least $\left(1-o(1)\right) (1/n) \omega_{h,i}$.
	\end{remark}
	
	Lastly, we will require the following result which establishes that, with probability $1 - o_n(1)$ over $G \sim \rrg$, most vertices are $k$-roots.
	
	\begin{theorem}[\cite{Lubetzky_2010}, Lemma 3.2] \label{thm:kroot_count}
		With probability $1 - o_n(1)$ over $G \sim \rrg$, there are $n - o(n)$ vertices in $G$ which are $k$-roots.
	\end{theorem}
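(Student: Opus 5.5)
The plan is to show that every vertex that is not a $k$-root lies close to a short cycle, and then to bound the number of vertices near short cycles using the cycle-count estimates already available in the paper.

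\emph{Reduction to short cycles.} Suppose $v$ is not a $k$-root, so $G[B_k(v)]$ contains a cycle. Take a breadth-first search tree of $G[B_k(v)]$ rooted at $v$. Some edge $\{x,y\}$ of $G[B_k(v)]$ is not in this tree. Adding it to the tree closes the cycle formed by the tree paths from $x$ and $y$ up to their common ancestor, together with $\{x,y\}$. Each tree path has length at most $k$, so this cycle has length at most $2k+1$. Every vertex on it lies in $B_k(v)$, so $v$ is within distance $k$ of some vertex on a cycle of length at most $2k+1$.

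\emph{Counting vertices on short cycles.} Let $\mathcal{B}'$ be the set of vertices on cycles of length at most $r' \coloneqq 2k+1$. Since $k=\lfloor\log_{d-1}\log n\rfloor$, we have $(d-1)^{r'} \le (d-1)(\log n)^2$. Hence $\sqrt{r'}\,(d-1)^{3r'/2-1} = O\big((\log n)^{3}\sqrt{\log\log n}\big) = o(n)$, so the hypothesis of Corollary~\ref{cor:cycle_size_max_vertex_count} holds with $r'$ in place of $r$. The corollary then gives, with probability $1-o(1)$,
\[
|\mathcal{B}'| \le (\log n)(d-1)^{r'} = O\big((\log n)^3\big).
\]

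\emph{Counting non-$k$-roots.} By the reduction, every non-$k$-root lies in $\bigcup_{w\in\mathcal{B}'} B_k(w)$. In a $d$-regular graph,
\[
|B_k(w)| \le 1 + d\sum_{j=0}^{k-1}(d-1)^j = O\big((d-1)^k\big) = O(\log n).
\]
So the number of non-$k$-roots is $O\big((\log n)^4\big) = o(n)$, and $G$ has $n-o(n)$ $k$-roots with probability $1-o_n(1)$.

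\emph{Main obstacle.} The only delicate step is checking that the corollary applies at the larger cycle length $2k+1$ rather than at $r$. This is just the growth-condition check above, so I expect no real obstacle.

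An alternative, if one prefers not to use the total-variation cycle-count result, is a first-moment bound. The expected number of cycles of length $j$ is $O\big((d-1)^j/j\big)$, so the expected total length of cycles of length at most $2k+1$ is $O\big((\log n)^2\big)$. Markov's inequality then gives $|\mathcal{B}'| \le (\log n)^3$ with probability $1-o(1)$, and the rest of the argument is unchanged.
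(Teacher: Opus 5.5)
Your proof is correct, and it takes a different route from the paper, which gives no proof here at all and simply imports the statement from Lubetzky and Sly (their Lemma~3.2).

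You derive the statement from tools already in the paper. The deterministic BFS step is sound. A non-tree edge $\{x,y\}$ of a BFS tree joins vertices whose depths differ by at most one, so in a simple graph neither endpoint is an ancestor of the other. Its fundamental cycle is therefore a genuine cycle of length at most $2k+1$ lying inside $B_k(v)$. The growth condition of Corollary~\ref{cor:cycle_size_max_vertex_count} at $r'=2k+1$ holds because $(d-1)^k\le\log n$. The union over $k$-balls around vertices on short cycles then leaves at most $O((\log n)^4)$ non-$k$-roots.

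Compared with the citation, your route gives a self-contained argument and a much stronger explicit error term: $n-O((\log n)^4)$ rather than merely $n-o(n)$. Its cost is a dependence on Johnson's Poisson approximation theorem, which is more than is needed here. Your first-moment alternative avoids that dependence and is the more economical version of the same argument. The expected number of vertices on cycles of length at most $2k+1$ is $O((\log n)^2)$, uniformly valid for such short lengths after the constant-factor loss in passing from the configuration model to simple graphs, and Markov's inequality finishes.
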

	
	We are now in a position to prove Lemma \ref{lem:sparse_good_set_size}, which gives an analogue of Lemma \ref{lem:kroot_path_counts} for counts of $(h, i)$-constrained walks avoiding small cycles of length less than $r$. 
	
	\sparseGoodSetSize*
	
	\begin{proof}
		By Lemmas \ref{lem:kroot_path_counts} and \ref{thm:kroot_count}, with probability $1 - o(1)$ over $G \sim \rrg$, for a given $k$-root $u$ there exists a set $S'_u$ of $k$-roots with size $n - o(n)$, such that for all $h \in \mathbb{N}$ satisfying $\Hmin \leq h \leq \Hmax$ and for all $v \in S'_u$: $
		\left|\mathcal{P}_{h} (u, v)\right| \geq \left(1-o(1)\right) \left({1}/{n}\right) d(d-1)^{h - 1}$. In particular, for all $n$ sufficiently large, we have that $\left|\mathcal{P}_{h} (u, v)\right| \geq ({1}/{2n}) d(d-1)^{h - 1}$.
		
		Fix such a graph $G$, $u \in \Vgood$ and $v \in S'_u$. For all $i \in \mathbb{N}$, by Remark \ref{rem:k-root-walks-count} we have that the  paths in $\mathcal{P}_h(u,v)$
        correspond to $({1}/{2n}) \omega_{h, i}$ walks from $u$ which are $(h, i)$-constrained and end at $v$.
		
		Let $\W_{h, i} (u, v)$ correspond to this collection of walks. By the previous remarks, $\left|\W_{h, i} (u, v)\right| > ({1}/{2n}) \omega_{h, i}$. Clearly for any two distinct vertices $v, w \in S'_u$, these collections are disjoint (namely since they end at distinct vertices). 
		Let $\W_{h, i} (u)$ be the collection of all $(h, i)$-constrained walks from $u$, where $\left|\W_{h, i} (u)\right| = \omega_{h, i}$. 
		
		Next fix $h, i \in \mathbb{N}$ such that $\Hmin \leq h \leq \Hmax$ and $i \leq \left({4}/{\pmin}\right) \log_{d - 1} n$. Since $u \in \Vgood$ then it is $(h, i)$-sparse and hence at most $\left({1}/{(\log n)^3}\right) \omega_{h, i}$ of the walks in $\W_{h, i} (u)$ are not $r$-acyclic. Let this collection of ``bad" walks be $\W^\mathrm{bad}_{h, i} (u)$, and for all $v \in S'_u$ let $\W^\mathrm{bad}_{h, i} (u, v) \coloneqq \W^\mathrm{bad}_{h, i} (u) \cap \W_{h, i} (u, v)$. Note that all of these sets are pairwise disjoint and that their union is a subset of $\W^\mathrm{bad}_{h, i} (u)$.
		
		Let $S^\mathrm{bad}_u (h, i)$ be the subset of $S'_u$ such that $v \in S^\mathrm{bad}_u (h, i)$ if, and only if, $\left|\W^\mathrm{bad}_{h, i} (u, v)\right| > \frac{1}{2} \left|\W_{h, i} (u, v)\right|$. Then,
		\begin{align*}
			\left|S^\mathrm{bad}_u (h, i)\right| \frac{\omega_{h, i}}{4n} \leq \sum_{v \in S^\mathrm{bad}_u (h, i)} \frac{\left|\W_{h, i} (u, v)\right|}{2} < \sum_{v \in S^\mathrm{bad}_u (h, i)} \left|\W^\mathrm{bad}_{h, i} (u, v)\right| \leq \left|\W^\mathrm{bad}_{h, i} (u)\right| < \frac{\omega_{h, i}}{(\log n)^3}
		\end{align*}
		and hence $\left|S^\mathrm{bad}_u (h, i)\right| = O\left({n}/{(\log n)^3}\right)$. 
		
		Define $S_u \coloneqq S'_u \setminus \bigcup\limits_{h, i} S^\mathrm{bad}_u (h, i)$. Since $\left|S^\mathrm{bad}_u (h, i)\right| = O\left({n}/{(\log n)^3}\right)$ for all $\Hmin \leq h \leq \hmax$ and $i \leq ({4}/{\pmin}) \log_{d - 1} n$, then $|S_u| \geq |S'_u| - O\left({n}/{(\log n)}\right)$. Since $|S'_u| = n - o(n)$ it follows that $|S_u| = n - o(n)$. 
		
		We have therefore constructed a set $S_u$ of size $n - o(n)$, such that for all $\Hmin \leq h \leq \hmax$ and $i \leq ({4}/{\pmin}) \log_{d - 1} n$ and for all $v \in S_u$, there are at least $({1}/{4 n}) \omega_{h, i}$ walks from $u$ to $v$ which are $(h, i)$-constrained and $r$-acyclic. The result follows.
	\end{proof}

    \section{Proof of Lemma \ref{lem:simplex_integral}} \label{appendix:A}
	
	\simplexIntegralLemma*
	
	\begin{proof}
		Fix $c > 0$ and $c' > c$. We will eventually choose an appropriate value of $c$. Let $\delta \coloneqq {c'}/{(\log n)}$.  Let $x_0, \dots, x_\alpha \in [0, 1]$ be the shifted and rescaled variables satisfying $x_i = \frac{T_i - \delta}{T - (\alpha + 1)\delta}$ for all $0 \leq i \leq \alpha$, and $\sum_{i=0}^\alpha x_i = 1$. The Jacobian for this change of variables satisfies $\mathrm{d} \overrightarrow{T_\alpha} = \left(T - (\alpha + 1)\delta\right)^\alpha \ \mathrm{d} \overrightarrow{x}$, and the simplex is bijectively mapped onto the unit simplex \[\mathcal{K}_{\alpha}
		\coloneqq \left\{x_0, \dots, x_\alpha \in [0, 1] \colon x_0 + \dots + x_\alpha = 1 \right\}\] with volume $\mathrm{Vol}(\mathcal{K}_\alpha) = \frac{1}{\alpha!}$. Let $\mu_0 \coloneqq \varepsilon \log n$ and $\lambda \coloneqq \frac{1}{2} \mu_0 \left(T - (\alpha + 1) \delta\right)$. Then, for all $0 \leq i \leq \alpha$, we have that $1 - C n^{-(\varepsilon / 2) T_i} = 1 - C \exp\left(-(\mu_0 / 2) T_i\right) = 1 - C \exp\left(-(\mu_0 / 2) \delta - \lambda x_i\right)$, and therefore
		\begin{equation} \label{eq:sil1}
			\int_{\substack{T_0, \dots, T_\alpha \geq \delta \\ \sum_{i = 0}^\alpha T_i = T}} \ \prod_{i=0}^\alpha \left(1 - C n^{-(\varepsilon / 2) T_i}\right) \ \mathrm{d} \overrightarrow{T_\alpha} = \left(T - (\alpha + 1)\delta\right)^\alpha \int_{\mathcal{K}_{\alpha}} \prod_{i = 0}^\alpha \left(1 - C e^{-(\mu_0 / 2) \delta - \lambda x_i}\right) \ \mathrm{d} \overrightarrow{x}.
		\end{equation}
		
		Let $\gamma(c) \coloneqq C e^{-c \left({\varepsilon}/{2}\right)}$. Since $\mu_0 = \varepsilon \log n$ and $\delta = \frac{c'}{\log n} > \frac{c}{\log n}$, then, for all $0 \leq i \leq \alpha$, we have that $1 - C e^{-(\mu_0 / 2) \delta - \lambda x_i} \geq 1 - \gamma(c) e^{-\lambda x_i}$. By inclusion-exclusion, we also have that
		\begin{equation} \label{eq:sil2}
			\prod_{i = 0}^\alpha \left(1 - \gamma(c) e^{-\lambda x_i}\right)
			= \sum_{j = 0}^{\alpha+1} (-1)^j \left(\gamma(c)\right)^j \sum_{\substack{S \subseteq \{0, 1, \dots, \alpha\} \\ |S| = j}} e^{-\lambda \sum_{i\in S} x_i}
		\end{equation}
		
		By symmetry over the simplex, we have that each subset of size $j$ contributes equally to the summation above, and therefore we can rewrite (\ref{eq:sil2}) as 
		\begin{equation} \label{eq:sil3}
			\prod_{i = 0}^\alpha \left(1 - \gamma(c) e^{-\lambda x_i}\right)
			= \sum_{j = 0}^{\alpha+1} (-1)^j \left(\gamma(c)\right)^j \binom{\alpha + 1}{j} e^{-\lambda \sum_{i = 1}^j x_{i-1}}.
		\end{equation}
		
		Letting $M_j (\lambda) \coloneqq \left(\gamma(c)\right)^j \binom{\alpha + 1}{j} \int_{\mathcal{K}_{\alpha}} e^{-\lambda \sum_{i = 1}^j x_{i-1}} \ \mathrm{d} \overrightarrow{x}$ for $0 \leq j \leq \alpha + 1$, we therefore have that
		\begin{equation} \label{eq:sil4}
			\int_{\mathcal{K}_{\alpha}} \prod_{i = 0}^\alpha \left(1 - C e^{-(\mu_0 / 2) \delta - \lambda x_i}\right) \ \mathrm{d} \overrightarrow{x} \geq \sum_{j = 0}^{\alpha+1} (-1)^j M_j(\lambda).
		\end{equation}
		
		We will proceed to show that for $j \geq 1$, the differences $M_j (\lambda) - M_{j+1} (\lambda)$ are non-negative. For $1 \leq  j \leq \alpha$, let $X_j = x_0 + \dots + x_{j-1}$. Under the uniform distribution on $\mathcal{K}_{\alpha}$, $(x_0, \dots, x_\alpha)$ follow a $\mathrm{Dirichlet}(1, \dots, 1)$ distribution, and by the aggregation property for the Dirichlet distribution, $(X_j, 1 - X_j)$ follows a $\mathrm{Dirichlet}(j, \alpha - j + 1)$ distribution. From the marginal properties of Dirichlet distributions, we therefore have that $X_j$ follows a $\mathrm{Beta}(j, \alpha - j + 1)$ distribution with probability density function
		\begin{eqnarray*}
			f_{X_j}(x)
			= \frac{\alpha!}{(j-1)! \ (\alpha - j)!} x^{j - 1}(1 - x)^{\alpha - j}, & \forall x \in [0, 1].
		\end{eqnarray*}
		
		We therefore have that \[
		M_j (\lambda) = \mathrm{Vol}(\mathcal{K}_{\alpha}) \cdot \left(\gamma(c)\right)^j \binom{\alpha + 1}{j} \cdot \frac{\alpha!}{(j-1)! \ (\alpha - j)!} \int_0^1 e^{-\lambda x} x^{j - 1} (1 - x)^{\alpha - j} \ \mathrm{d} x.\]
		
		Next note that since $(1-x)^{\alpha-j} \leq 1$, then \[\int_0^1 e^{-\lambda x} x^{j - 1}(1 - x)^{\alpha - j} \ \mathrm{d} x \leq \int_0^1 e^{-\lambda x} x^{j - 1} \ \mathrm{d} x = \lambda^{-j} \bigl((j-1)! - \Gamma(j, \lambda)\bigr) \leq  \lambda^{-j} (j-1)!\] where $\Gamma(a, z)$ denotes the incomplete gamma function.
		
		Hence $M_j (\lambda)$ is upper bounded by $\left({\gamma(c)}/{\lambda}\right)^j \binom{\alpha + 1}{j} \frac{1}{(\alpha-j)!}$. We next seek a lower bound on $M_j (\lambda)$. First note that $\lambda  = \Omega\left((\log n)^2\right)$ and that $j = O(\log n)$. Hence for all $n$ sufficiently large, $j \lambda^{-1} \in (0, 1)$ and therefore for $x \in \left[0, j \lambda^{-1}\right]$ it follows that $1 - x \geq 1 - j \lambda^{-1}$. 
		
		We therefore have that, for all $n$ sufficiently large,
		\begin{align*}
			\int_0^1 e^{-\lambda x} x^{j - 1}(1 - x)^{\alpha - j} \ \mathrm{d} x  &\geq \int_0^{j \lambda^{-1}} e^{-\lambda x} x^{j - 1} \left(1 - j \lambda^{-1}\right)^{\alpha - j} \ \mathrm{d} x \\ &= \left(1 - j \lambda^{-1}\right)^{\alpha-j} \lambda^{-j} \bigl((j-1)! - \Gamma(j, j)\bigr).
		\end{align*}
		
		For all $n$ sufficiently large, we have that $\lambda > \frac{\varepsilon}{200 \pmin} (\log_{d-1} n) (\log n)$. Together with the fact that $j \leq \alpha \leq ({4}/{\pmin}) \log_{d - 1} n$, we have that for sufficiently large $n$, \[\left(1-j \lambda^{-1}\right)^{\alpha - j} \geq \left(1 - \frac{\left({800}/{\varepsilon}\right)}{\log n}\right)^{({4}/{\pmin}) \log_{d-1} n} \geq e^{-\frac{6400}{\varepsilon \pmin}}.\]
		
		We also have that the term $(j-1)! - \Gamma(j, j)$ is lower bounded by $\frac{(j-1)!}{2}$. Let $\theta \coloneqq \frac{1}{2} e^{-\frac{6400}{\varepsilon \pmin}}$. Then $M_j (\lambda)$ is lower bounded by $\theta \left({\gamma(c)}/{\lambda}\right)^j\binom{\alpha + 1}{j}\frac{1}{(\alpha - j)!}$. Next note that, for all $n$ sufficiently large,
		\begin{align}
			M_j (\lambda) - M_{j+1}(\lambda) &\geq \frac{\theta}{(\alpha-j)!} \left(\frac{\gamma(c)}{\lambda}\right)^j \binom{\alpha + 1}{j} - \left(\frac{\gamma(c)}{\lambda}\right)^{j+1} \frac{1}{(\alpha-j-1)!} \binom{\alpha + 1}{j+1} \nonumber \\
			&= \frac{1}{(\alpha-j)!} \left(\frac{\gamma(c)}{\lambda}\right)^j \binom{\alpha + 1}{j} \left(\theta - \frac{\gamma(c) (\alpha-j)(\alpha-j+1)}{\lambda(j+1)}\right) \nonumber \\
			&\geq \frac{1}{(\alpha-j)!} \left(\frac{\gamma(c)}{\lambda}\right)^j \binom{\alpha + 1}{j} \left(\theta - \gamma(c) \left(\frac{1600 ({4}/{\pmin} + {1}/{(\log n)})}{\varepsilon}\right)\right) \nonumber \\
			&\geq  \frac{1}{(\alpha-j)!} \left(\frac{\gamma(c)}{\lambda}\right)^j \binom{\alpha + 1}{j} \left(\theta - \gamma(c) \left(\frac{12800}{\varepsilon \pmin}\right)\right) \label{eq:sil5}.
		\end{align}
		and hence for $M_j (\lambda) - M_{j+1}(\lambda) \geq 0$ we require that $\theta \geq \gamma(c) \left(\frac{12800}{\varepsilon \pmin}\right)$. 
		
		Substituting for $\theta$ and $\gamma(c)$ then re-arranging, we require that 
		\begin{equation} \label{eq:sil6}
			e^{c \left({\varepsilon}/{2}\right)} \geq C \left(\frac{25600}{\varepsilon \pmin}\right)e^{\frac{6400}{\varepsilon \pmin}}.
		\end{equation}
		
		In particular, there exists $c > 0$ such that (\ref{eq:sil6}) is satisfied and hence $M_j (\lambda) > M_{j+1}(\lambda)$ for all $1 \leq j \leq \alpha - 1$ by (\ref{eq:sil5}). Therefore, for this choice of $c$, together with (\ref{eq:sil4}) we have that
		\begin{equation} \label{eq:sil7}
			\int_{\mathcal{K}_{\alpha}} \prod_{i = 0}^\alpha \left(1 - C e^{-(\mu_0 / 2) \delta - \lambda x_i}\right) \ \mathrm{d} \overrightarrow{x} \geq M_0 (\lambda) - M_1 (\lambda) - M_{\alpha + 1} (\lambda).
		\end{equation}
		
		Now, $M_0 (\lambda) = \frac{1}{\alpha!}$, $M_1 (\lambda) \leq \frac{\gamma(c)}{\lambda} \frac{\alpha + 1}{(\alpha - 1)!} = \frac{1}{\alpha!} \left(\gamma(c)\lambda^{-1} \alpha (\alpha + 1)\right)$ and $M_{\alpha + 1} (\lambda) = \frac{1}{\alpha!} \frac{\left(\gamma(c)\right)^{\alpha + 1}}{e^\lambda}$. Since $\gamma(c)$ is a constant independent of $n$, $\alpha = \Theta(\log n)$ and $\lambda = \Omega\left((\log n)^2\right)$, it follows that $M_{\alpha + 1} (\lambda) = \frac{o(1)}{\alpha!}$. In particular then, for $n$ sufficiently large, we have that 
		\begin{align}
			M_0 (\lambda) - M_1 (\lambda) - M_{\alpha + 1} (\lambda) &\geq \frac{1}{\alpha!} \left(1 - \frac{\gamma(c) \alpha (\alpha + 1)}{\lambda} - o_n(1)\right) \nonumber \\
			&\geq \frac{1}{\alpha!} \left(1 - o_n(1) - \gamma(c) \left(\frac{12800}{\varepsilon \pmin}\right)\right) \nonumber \\
			&\geq \frac{1}{\alpha!} \left(\frac{1}{4} + \left(\theta - \gamma(c) \left(\frac{12800}{\varepsilon \pmin}\right)\right)\right) \nonumber \\
			&\geq \frac{1}{4} \frac{1}{\alpha!} \label{eq:sil8}
		\end{align}
		where the first inequality follows from our bounds on each term, the second inequality follows from $\alpha \leq ({4}/{\pmin}) \log_{d-1} n$ and $\lambda > \frac{\varepsilon}{200 \pmin} (\log_{d-1} n)(\log n)$, the third inequality follows from the fact that $\theta < \frac{1}{2} < \frac{3}{4} - o_n(1)$ for $n$ sufficiently large, and the last inequality follows from our previous remark that our choice of $c$ is such that $\theta \geq \gamma(c) \left(\frac{12800}{\varepsilon \pmin}\right)$. The result follows from (\ref{eq:sil1}), (\ref{eq:sil7}) and (\ref{eq:sil8}).
	\end{proof}
	
	\section{Proofs of Lemmas \ref{lem:f_alpha_properties} and \ref{lem:poisson_concentration}} \label{appendix:B}
	
	\fAlphaLem*
	
	\begin{proof}
		First note that $\frac{\partial}{\partial x} f_\alpha(x, y) = \log\left(\frac{a(\alpha - x - y)}{x(1-a-b)}\right)$ and $\frac{\partial}{\partial y} f_\alpha(x, y) = \log\left(\frac{b(\alpha - x - y)}{y(1-a-b)}\right)$. Both of these are zero at $(x, y) = \left(a \alpha, b \alpha\right)$. Hence \textit{(i)} follows.
		
		Let $c = 1 - a - b$ and define $h_\alpha(x, y) \coloneqq f_\alpha\left(a \alpha, b \alpha\right) - f_\alpha\left(a (\alpha + x), b (\alpha + y)\right)$. We will show that, given $\alpha$ sufficiently large, for all $0 \leq x, y \leq \sqrt{\alpha}$ we have that $h_\alpha (x, y) \leq h_\alpha (\sqrt{\alpha}, \sqrt{\alpha})$. It suffices to show that $\left(\frac{\partial}{\partial x} h_\alpha\right) (x, y) \geq 0$ and $\left(\frac{\partial}{\partial y} h_\alpha\right) (x, y) \geq 0$ over the region $0 \leq x, y \leq \sqrt{\alpha}$. First, observe that, for all $\alpha$ sufficiently large, we have that for all $0 \leq x, y \leq \sqrt{\alpha}$:
		\begin{enumerate}[i.]
			\item $\left(\frac{\partial^2}{\partial x \partial x} h_\alpha\right) (x, y) = \frac{a}{\alpha + x} + \frac{a^2}{c \alpha - a x - by} > 0$ and $\left(\frac{\partial^2}{\partial y \partial y} h_\alpha\right) (x, y) = \frac{b}{\alpha + y} + \frac{b^2}{c \alpha - a x - by} > 0$;
			\item $\left(\frac{\partial^2}{\partial x \partial y} h_\alpha\right) (x, y) = \frac{a b}{c \alpha - a x - by} > 0$.
		\end{enumerate}	
		Also,
		\begin{align*}
			\left(\frac{\partial}{\partial x} h_\alpha\right) (x, y) &= a \left( \log\left(\frac{c}{c \alpha - a x - b y}\right) - \log\left(\frac{1}{\alpha + x}\right)\right) \\
			\mathrm{and} \quad \left(\frac{\partial}{\partial y} h_\alpha\right) (x, y) &= b \left(\log\left(\frac{c}{c \alpha - a x - b y}\right) - \log\left(\frac{1}{\alpha + y}\right)\right)
		\end{align*}
		which are both $0$ at $(x, y) = (0, 0)$. Hence, in conjunction with the second-order derivatives, we have that for $\alpha$ sufficiently large then $\left(\frac{\partial}{\partial x} h_\alpha\right) (x, y) \geq 0$ and $\left(\frac{\partial}{\partial y} h_\alpha\right) (x, y) \geq 0$ over the region $0 \leq x, y \leq \sqrt{\alpha}$, and consequently $h_\alpha (x, y) \leq h_\alpha (\sqrt{\alpha}, \sqrt{\alpha})$ over this region.
		
		All that remains to prove \textit{(ii)} is to show that for $\alpha$ sufficiently large, $h_\alpha \left(\sqrt{\alpha}, \sqrt{\alpha}\right) \leq \frac{a+b}{2(1-a-b)}$. Consider the function \[g(z) = \frac{(1-c)^2 \left((1-c + cz) \log \left(1+\frac{c z}{1-c}\right)+ c(1-z) \log (1-z)\right)}{c^2 z^2}\] where, after some simple algebraic manipulation, we have that
		\begin{align*}
			h_\alpha (\sqrt{\alpha}, \sqrt{\alpha}) &= \sqrt{\alpha} \left(\left(1+\sqrt{\alpha}\right) (1-c) \log \left(1+\frac{1}{\sqrt{\alpha}}\right)+\left(c\sqrt{\alpha} + c - 1\right) \log \left(1-\frac{1-c}{c \sqrt{\alpha}}\right)\right)\\
			&= g\left(\frac{1-c}{c \sqrt{\alpha}}\right).
		\end{align*}
		
		From the Taylor expansion of $g(z)$, we have that $g(z) = \frac{1-c}{2c} + O(z)$ and therefore $\lim\limits_{\alpha \to \infty} g\left(\frac{1-c}{c \sqrt{\alpha}}\right) = \frac{1-c}{2c}$. We also have that the Taylor expansion of $g'(z)$ is given by \[g'(z) = \frac{1-2c}{6c} + \frac{(1-c)^2}{c}\sum_{k = 1}^\infty \frac{k-1}{k(k+1)}\left(1-\frac{c^{k+2}}{(c-1)^{k+2}}\right)z^k\] which one can readily check is negative for $z$ sufficiently small given $\frac{1}{2} < c < 1$. In particular there exists $\alpha_0 > 0$ such that for all $\alpha > \alpha_0$ the function $g\left(\frac{1-c}{c \sqrt{\alpha}}\right)$ is increasing. Therefore for all $\alpha > \alpha_0$, $g\left(\frac{1-c}{c \sqrt{\alpha}}\right) < \frac{1-c}{2c}$ and hence \textit{(ii)} follows.
	\end{proof}
	
	\poissonConcentration*
	
	\begin{proof}
		For any $Z \sim \mathsf{Poisson}(T)$ for some $T > 0$, it is well known (see e.g. Proposition 2.10 in \cite{alma991014693659707026}) that for any $x > 0$, $\mathbb{P}\left[\left|Z - T\right| \geq x\right] \leq 2 \exp\left(-\frac{x^2}{2(T + x)}\right)$. Fix $T = \frac{\kmin + \kmax}{2}$ and $x = \frac{\kmax - \kmin}{2}$; observe that $\Cmin \log n \leq T \leq \Cmax \log n$ and $\left({\Cmid}/{2}\right) \log n \leq x \leq \left({\Cmax}/{2}\right) \log n$. 
		
		Letting $C \coloneqq \frac{\Cmid^2}{12 \Cmax}$, since $Y$ has rate 1 then $N_Y(T)$ is distributed as $\mathsf{Poisson}(T)$ and hence
		\begin{equation*}
			\mathbb{P} \left[\kmin \leq N_Y (T) \leq \kmax \right] \geq 1 - \mathbb{P}\left[\left|N_Y (T) - T\right| \geq x\right] \geq 1 - 2n^{-C} \geq \frac{1}{2}
		\end{equation*}
		where the last inequality follows holds for all $n$ sufficiently large, as required.
	\end{proof}
    
\end{document}

%% file: Figures/fig_vgood_walks.tex
\begin{tikzpicture}[
    every node/.style={circle, draw, fill=black, inner sep=2pt},
    level distance=1.2cm, sibling distance=1.5cm
    ]
    
    \node[fill=black, draw=none] (root) at (0,0) {\textcolor{white}{$u$}};
    
    \node (c1) at (-1.5,-1) {};
    \node (c2) at (-0.9,-1) {};
    \node[fill=none, draw=none] at (-1, -0.25) {$H$};
    \draw[very thick] (root) -- (c1) -- (c2) -- (root);
    \node[draw=gray, fill=gray!20, inner sep=2pt] (c1a) at (-2.1,-1.8) {};
    \node[draw=gray, fill=gray!20, inner sep=2pt] (c2a) at (-1.2,-1.8) {};
    \draw[dashed, gray, very thick] (c1) -- (c1a);
    \draw[dashed, gray, very thick] (c2) -- (c2a);
    \draw[dashed, gray, very thick] (c2a) to[bend left=15] (c1a);
    
    \node[fill=black, draw=none] (r1) at (1.2,-1.2) {\textcolor{white}{$u_j$}};
    \node[fill=none, draw=none] at (2, -1.2) {$T_j$};
    \draw[very thick] (root) -- (r1);
    
    \node (l1a) at (0.6,-2.4) {};
    \node[draw=red, fill=none, cross out, minimum size=4pt, inner sep=0pt, very thick] (l1b) at (1.8,-2.4) {};
    \draw[very thick] (r1) -- (l1a);
    \draw[very thick] (r1) -- (l1b);
    
    \node (l2a) at (0.3,-3.6) {};
    \node[draw=red, fill=none, cross out, minimum size=4pt, inner sep=0pt, very thick] (l2b) at (0.9,-3.6) {};
    \node (l2c) at (1.5,-3.6) {};
    \node (l2d) at (2.1,-3.6) {};
    \draw[very thick] (l1a) -- (l2a);
    \draw[very thick] (l1a) -- (l2b);
    \draw[very thick] (l1b) -- (l2c);
    \draw[very thick] (l1b) -- (l2d);
    
    \node[draw=red, fill=none, cross out, minimum size=4pt, inner sep=0pt, very thick] (l3a) at (0.15,-4.8) {};
    \node (l3b) at (0.45,-4.8) {};
    \node (l3c) at (0.75,-4.8) {};
    \node (l3d) at (1.05,-4.8) {};
    \node[draw=red, fill=none, cross out, minimum size=4pt, inner sep=0pt, very thick] (l3e) at (1.35,-4.8) {};
    \node (l3f) at (1.65,-4.8) {};
    \node (l3g) at (1.95,-4.8) {};
    \node (l3h) at (2.25,-4.8) {};
    
    \draw[very thick] (l2a) -- (l3a);
    \draw[very thick] (l2a) -- (l3b);
    \draw[very thick] (l2b) -- (l3c);
    \draw[very thick] (l2b) -- (l3d);
    \draw[very thick] (l2c) -- (l3e);
    \draw[very thick] (l2c) -- (l3f);
    \draw[very thick] (l2d) -- (l3g);
    \draw[very thick] (l2d) -- (l3h);
    
    \draw[decorate, decoration={brace, mirror}, thick]
    ($(l3h) + (0.4,0.05)$) -- ($(l1b) + (0.4,0.05)$);
    
    \node[draw=none, fill=none, right] at ($(l1b) + (0.6,0)$) {$\frac{1}{10} R = \frac{1}{50} \log_{d-1} n$};
    \node[draw=none, fill=none, right] at ($(l3h) + (0.5,0)$) {$\frac{1}{5} R = \frac{1}{25} \log_{d-1} n$};
    
    \node[draw=none, fill=none, right] at (2.8,-6.0) {\textcolor{gray}{$R = \frac{1}{5} \log_{d-1} n$}};
    
    \draw[dashed, gray, very thick] (l3h) -- +(0.2,-1.2);
    \draw[dashed, gray, very thick] (l3a) -- +(-0.2,-1.2);
    \node[draw=gray, fill=gray!20, inner sep=2pt] (l4a) at ($(l3h)+(0.2,-1.2)$) {};
    \node[draw=gray, fill=gray!20, inner sep=2pt] (l4b) at ($(l3a)+(-0.2,-1.2)$) {};
    \draw[dashed, gray, very thick] (l4a) to[bend left=15] (l4b);
    
    \draw[blue!50, dashed, very thick] (0.6,-2.4) circle (1.5);
    \draw[blue!50, <->, very thick] (0.6,-2.4) -- (-0.9, -2.4)
    node[draw=none, fill=none, left, sloped] {$k = \log_{d-1} \log n$};
\end{tikzpicture}